\numberwithin{equation}{section}
\newcommand{\vp}{\varepsilon}
\def\R{{\mathbb R} }
\theoremstyle{plain}
\newtheorem{thm}{Theorem}[section]
\newtheorem{lem}{Lemma}[section]
\newtheorem{pro}{Proposition}[section]
\theoremstyle{definition}
\newtheorem{rem}{Remark}[section]
\begin{document}

\title{
 Pointwise Bounds and Blow-up for Systems of
  Semilinear Parabolic 
  Inequalities and Nonlinear Heat Potential
  Estimates}

\author{Marius Ghergu\footnote{School of Mathematical Sciences,
    University College Dublin, Belfield, Dublin 4, Ireland; {\tt
      marius.ghergu@ucd.ie}},\quad
Steven D.~Taliaferro\footnote{Mathematics Department, Texas A\&M
    University, College Station, TX 77843-3368; {\tt stalia@math.tamu.edu}} 
\footnote{Corresponding author, Phone 001-979-845-3261, Fax
  001-979-845-6028}}

\date{}
\maketitle

\begin{abstract}
We study the  behavior for $t$ small and positive of
$C^{2,1}$ nonnegative solutions $u(x,t)$ and $v(x,t)$ of the system
\begin{equation*}
\begin{aligned}
  0\leq u_t-\Delta u\leq v^\lambda  \\  
 0\leq v_t-\Delta v\leq u^\sigma
\end{aligned} \qquad \mbox{ in } \Omega\times (0,1),
\end{equation*}
where $\lambda$ and $\sigma$ are nonnegative constants and $\Omega$ is
an open subset of $\R^n$, $n\ge 1$.
We provide optimal conditions on $\lambda$ and $\sigma$  such that
solutions of this system satisfy pointwise bounds in compact subsets
of $\Omega$ as $t\to 0^+$. Our approach relies on new pointwise bounds
for nonlinear heat potentials which are the parabolic analog of
similar bounds for nonlinear Riesz potentials.
\medskip

\noindent 2010 Mathematics Subject Classification. 35B09, 35B33,
35B44, 35B45, 35K40, 35R45.
\medskip

\noindent {\it Keywords}: Parabolic system; Partial differential
inequalities; Heat potential estimate; Pointwise bound; Blow-up.
\end{abstract}

\section{Introduction}\label{sec1}
In this paper we study the behavior for $t$ small and positive of
$C^{2,1}$ nonnegative solutions $u(x,t)$ and $v(x,t)$ of the system
\begin{equation}\label{eq1.1}
\begin{aligned}
  0\leq u_t-\Delta u\leq v^\lambda  \\  
 0\leq v_t-\Delta v\leq u^\sigma
\end{aligned} \qquad \mbox{ in } \Omega\times (0,1),
\end{equation}
where $\lambda$ and $\sigma$ are nonnegative constants and $\Omega$ is
an open subset of $\R^n$, $n\ge 1$.
More precisely, we consider the following question.
\medskip

\noindent {\bf Question 1}. For which nonnegative constants $\lambda$
and $\sigma$ do there exist continuous functions $h_1 ,h_2
:(0,1)\to(0,\infty)$ such that for all compact subsets $K$ of $\Omega$
and for all $C^{2,1}$ nonnegative solutions $u(x,t)$
 and
$v(x,t)$ of the system \eqref{eq1.1} we have
\begin{equation}\label{eq1.3}
 \max_{x\in K}u(x,t)=O(h_1 (t)) \quad \text{ as } t\to 0^+
\end{equation}
\begin{equation}\label{eq1.4}
 \max_{x\in K}v(x,t))=O(h_2 (t)) \quad \text{ as }t\to 0^+
\end{equation}
and what are the optimal such $h_1$ and $h_2$ when they exist?
\medskip

We call a function $h_1$ (resp. $h_2$) with the above properties a
pointwise bound in compact subsets for $u$ (resp. $v$) as $t\to0^+$.

\begin{rem}\label{rem1}
 Let
 \begin{equation}\label{eq1.5}
  \Phi(x,t)=\begin{cases}
\frac{1}{(4\pi t)^{n/2}} 
e^{-\frac{|x|^2}{4t}}&\text{for }(x,t)\in \R^n\times (0,\infty)\\
   0&\text{for }(x,t)\in \R^n\times (-\infty,0]
   \end{cases}             
  \end{equation}
  be the heat kernel. Since $\Phi_t-\Delta\Phi=0$ in $\R^n\times(0,\infty)$, the
  functions $u_0=v_0=\Phi$ are always $C^{2,1}$ nonnegative solutions
  of \eqref{eq1.1}.  Hence, since $\Phi(0,t)=\frac{1}{(4\pi
    t)^{n/2}}$, any pointwise bound as $t\to 0^+$ in compact subsets
  of $\Omega$ for nonnegative solutions of \eqref{eq1.1} must be at
  least as large as $t^{-n/2}$ and whenever $t^{-n/2}$ is such a bound
  for $u$ (resp. $v$) it is necessarily optimal. In this case we say
  that $u$ (resp. $v$) is {\it heat bounded} in compact subsets of
  $\Omega$ as $t\to 0^+$.
\end{rem}

We shall see that whenever a pointwise bound as $t\to 0^+$ in compact
subsets of $\Omega$ for nonnegative solutions of
\eqref{eq1.1} exists, then $u$ or $v$ (or both) are heat bounded
as $t\to 0^+$.

The literature on scalar and systems of parabolic {\it equations} is
quite vast. A good source for this material is the book
\cite{QS2007}. However, very little attention has been paid to
systems of parabolic {\it inequalities}, and, as far as we know, all
results deal with a very different aspect of these inequalities; namely the
nonexistence of global solutions. See for example
\cite{C2000,C2001,MP2004}.

Let us mention some of the methods and tools we use to study Question
1. First and most noteworthy of these are some new results for linear
and nonlinear heat potentials. To motivate them recall that if
$f:\R^n\to\R$, $n\ge 3$, is a nonnegative measurable function,
$\alpha\in(0,n)$ is a constant, and 
\[
\Gamma(x)=\frac{C(n)}{|x|^{n-2}}
\]
is a fundamental solution of $-\Delta$ in $\R^n$ then the Riesz
potential of $f$ is given by the convolution 
\[
\Gamma^{\frac{n-\alpha}{n-2}}\ast f.
\]
It has been extensively studied because of its usefulness in potential
theory and the study of elliptic PDEs. See for example the books
\cite{Stein,AH,M}. Three important results concerning the Riesz
potential operator, which are relevant to this paper, are Hedberg's
inequality \cite{H1972}; the Hardy-Littlewood-Sobolev inequality (see
\cite[p. 119]{Stein}); and estimates for the nonlinear potential 
\[
\Gamma^{\frac{n-\alpha}{n-2}}
\ast\left(\left(\Gamma^{\frac{n-\beta}{n-2}}\ast f\right)^\sigma\right)
\]
first studied in \cite{MH}. A crucial tool for the proofs of these
results is the celebrated Hardy-Littlewood maximal function
inequalities (see \cite[p. 5]{Stein}).

In our study of Question 1 there arises naturally the need to obtain
similar results for the convolution
\begin{equation}\label{eq1.6}
\Phi^{\frac{n+2-\alpha}{n}}\ast f,
\end{equation}
where $f:\R^n\times\R\to\R$, $n\ge 1$, is a nonnegative measurable function,
$\alpha\in(0,n+2)$ is a constant, and $\Phi$
is the fundamental solution of the heat operator given by
\eqref{eq1.5}. These new results for the heat potential operator \eqref{eq1.6}
are stated and proved in Section \ref{sec3} using a modified version
of the Hardy-Littlewood maximal function inequalities in which Euclidean balls
in $\R^n$ are replaced with heat balls in $\R^n\times\R$. 

Two other tools required are a Moser type iteration (see Lemma
\ref{lem4.6}) and a representation formula given in Lemma \ref{lem4.1}
for nonnegative super temperatures which is the parabolic analog of
the Brezis-Lions representation formula \cite{BL} for nonnegative
superharmonic functions.

\section{Statement of results}\label{sec2}

In this section we state our results for Question 1.   We can assume
without loss of generality that $\sigma\leq \lambda$.

If $\lambda$ and $\sigma$ are nonnegative constants satisfying
$\sigma\leq \lambda$ then $(\lambda ,\sigma)$ belongs to one of the
following four pointwise disjoint subsets of the $\lambda
\sigma$-plane:
\begin{align*}
 &A:=\left\{ (\lambda ,\sigma): \, 0\leq \sigma \leq \lambda \leq 
\frac{n+2}{n}\right\} \\
 &B:=\left\{ (\lambda ,\sigma): \, \lambda>\frac{n+2}{n}
   \quad\text{and}
\quad 0\leq \sigma<\frac{2}{n}+\frac{n+2}{n\lambda} \right\} \\
 &C:=\left\{ (\lambda ,\sigma): \, \lambda>\frac{n+2}{n} \quad
   \text{and}\quad \frac{2}{n}+\frac{n+2}{n\lambda} <\sigma \leq \lambda \right\} \\
 &D:=\left\{ (\lambda ,\sigma): \, \lambda>\frac{n+2}{n} \quad
   \text{and}\quad \sigma=\frac{2}{n}+\frac{n+2}{n\lambda} \right\}.
\end{align*}
\begin{figure}[H]
 \includegraphics[scale=.65]{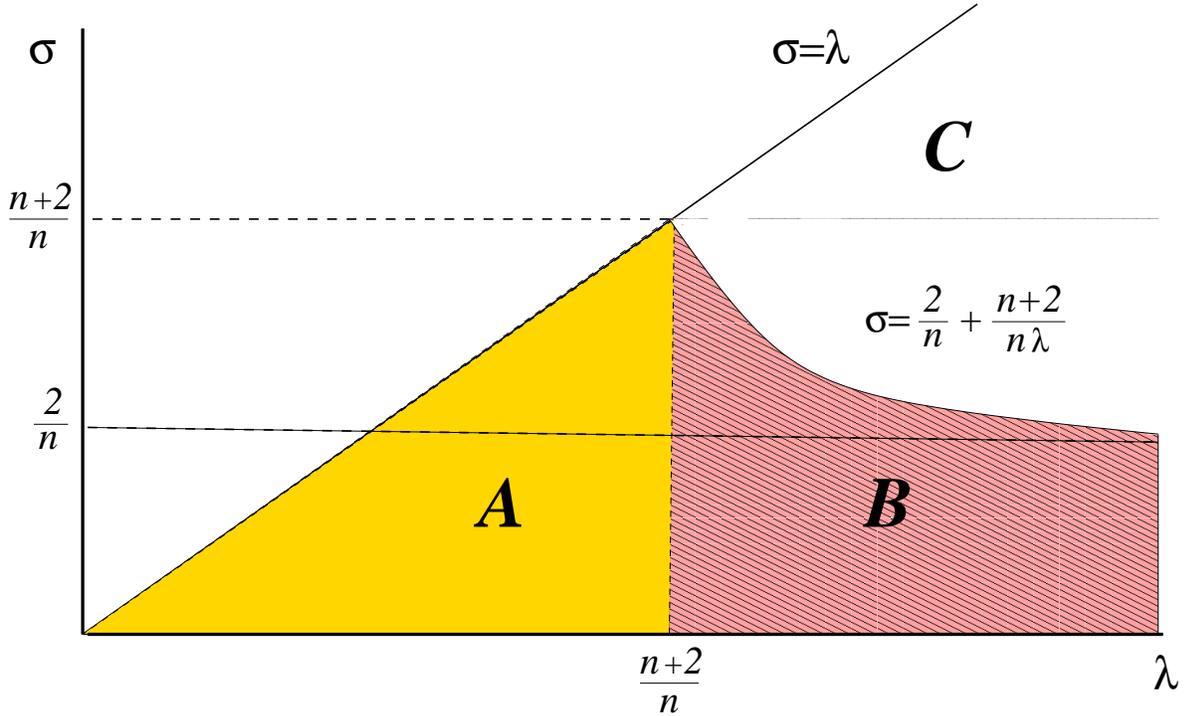}
 \caption{Graph of regions A, B and C.}
\end{figure}

Note that $A$, $B$ and $C$ are two dimensional regions in the $\lambda
\sigma$-plane whereas $D$ is the curve separating $B$ and $C$. (See
Figure 1.)

In this section we give a complete answer to Question 1 when
$(\lambda ,\sigma)\in A\cup B\cup C$.  The following theorem deals
with the case that $(\lambda ,\sigma)\in A$.

\begin{thm}\label{thm2.1}
Suppose $u(x,t)$ and $v(x,t)$ are $C^{2,1}$ nonnegative solutions of the system
\begin{equation}\label{eq2.1}
\begin{aligned}
0\leq u_t-\Delta u\leq \left(v+\left(\frac{1}{\sqrt{t}}\right)^n\right)^\lambda\\
0\leq v_t-\Delta v\leq \left(u+\left(\frac{1}{\sqrt{t}}\right)^n\right)^\sigma
\end{aligned} \qquad \mbox{ in } \Omega\times (0,1),
\end{equation}
where the constants $\lambda$ and $\sigma$ satisfy
\begin{equation}\label{eq2.3}
0\le\sigma\le\lambda\le\frac{n+2}{n}
\end{equation}
and $\Omega$ is an open subset of $\R^n$, $n\ge 1$.
Then both $u$ and $v$ are heat bounded in compact subsets of $\Omega$
as $t\to0^+$, that is, for each compact subset $K$ of $\Omega$ we have
 \begin{equation}\label{eq2.6}
  \max_{x\in K}u(x,t)=O\left(\left(\frac{1}{\sqrt{t}}\right)^n\right) 
\quad \text{ as }t\to0^+
 \end{equation}
and
 \begin{equation}\label{eq2.7}
  \max_{x\in K}v(x,t)=O\left(\left(\frac{1}{\sqrt{t}}\right)^n\right) 
\quad \text{ as }t\to0^+.
 \end{equation}
\end{thm}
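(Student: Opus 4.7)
My plan is to localize first: fix a compact $K \Subset \Omega$ and choose a bounded open set $\Omega'$ with $K \Subset \Omega' \Subset \Omega$. I would apply the parabolic Brezis--Lions-type representation formula (Lemma \ref{lem4.1}) to $u$ and $v$, both of which are nonnegative supertemperatures on $\Omega' \times (0,1)$. On the smaller cylinder $K \times (0,1/2]$ this should yield
\[
u(x,t) \le C_K + \int_0^{t}\!\!\int_{\Omega'} \Phi(x-y,t-s)\bigl(v(y,s)+s^{-n/2}\bigr)^\lambda\,dy\,ds,
\]
with the analogous inequality for $v$; the constant $C_K$ absorbs the smooth nonnegative caloric contribution, which is uniformly bounded on $K \times (0,1/2]$ by parabolic regularity applied on the slice $t = 1/2$ and on $\partial\Omega' \times (0,1/2]$.

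Using $(v+s^{-n/2})^\lambda \le C(v^\lambda + s^{-n\lambda/2})$ and the analogous splitting in the $v$ inequality, I would invoke the heat-potential estimates of Section \ref{sec3} to handle the purely time-dependent contributions
\[
\int_0^t\!\!\int_{\Omega'} \Phi(x-y,t-s)\, s^{-n\lambda/2}\,dy\,ds \quad\text{and}\quad \int_0^t\!\!\int_{\Omega'} \Phi(x-y,t-s)\, s^{-n\sigma/2}\,dy\,ds,
\]
showing each is $O(t^{-n/2})$ uniformly in $x \in K$ whenever $\lambda,\sigma \le (n+2)/n$. This is precisely the scaling at which $t^{-n/2}$ is critical; it then remains to control the genuinely nonlinear terms involving $v^\lambda$ and $u^\sigma$.

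The core of the argument is a bootstrap built on the parabolic Hardy--Littlewood--Sobolev and Hedberg-type estimates of Section \ref{sec3}, completed by the Moser-type iteration of Lemma \ref{lem4.6}. Starting from $u,v \in L^1_{\mathrm{loc}}$ (which follows from the representation formula applied once), I would use that convolution with $\Phi$ gains parabolic Sobolev integrability at the rate $1/p - 1/q = 2/(n+2)$; because $\lambda,\sigma \le (n+2)/n$, iterating the two coupled inequalities lifts $u$ and $v$ into $L^p_{\mathrm{loc}}$ for arbitrarily large $p$, and a single final application of a pointwise Hedberg-type estimate converts this into the sharp bound $u,v \le Ct^{-n/2}$ on $K \times (0,1/2]$, yielding \eqref{eq2.6} and \eqref{eq2.7}.

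I expect the hard part to be the closed critical endpoint $\lambda = (n+2)/n$, where the HLS gain barely fails and a naive iteration stalls. This is the step where Lemma \ref{lem4.6} becomes indispensable: by multiplying the two inequalities by suitable powers of $u$ and $v$ and integrating against heat-kernel weights, one obtains a self-improving estimate that does not rely on the HLS endpoint. Coordinating this Moser iteration across the two coupled unknowns --- so that gains in integrability for $v$ feed back correctly through the inequality for $u$, and vice versa --- will be the other delicate point to manage.
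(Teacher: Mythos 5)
Your proposal misses the key reduction the paper uses and, as a result, is forced into a bootstrap that actually stalls at the critical endpoint. The paper's proof of Theorem \ref{thm2.1} is a one-liner: after noting that (since $t^{-n/2}>1$ on $(0,1)$) one may increase both exponents to $\lambda=\sigma=\tfrac{n+2}{n}$, set $w=u+v$ and observe
\[
0\le Hw \le \left(v+\left(\tfrac{1}{\sqrt t}\right)^n\right)^{\frac{n+2}{n}}
+\left(u+\left(\tfrac{1}{\sqrt t}\right)^n\right)^{\frac{n+2}{n}}
\le C\left(w+\left(\tfrac{1}{\sqrt t}\right)^n\right)^{\frac{n+2}{n}},
\]
so $w$ satisfies the \emph{scalar} inequality treated in \cite{T2011}, whose Theorem 1.1 gives $w=O(t^{-n/2})$ on compacts, and \eqref{eq2.6}--\eqref{eq2.7} follow since $u,v\le w$. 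The theorem is thus a corollary of the scalar result; none of the Section \ref{sec3} machinery is invoked here.

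Your bootstrap route has concrete failures at the endpoint $\lambda=\sigma=\tfrac{n+2}{n}$. First, the decomposition $(v+s^{-n/2})^\lambda\le C(v^\lambda+s^{-n\lambda/2})$ already produces a piece that is not even locally integrable: $n\lambda/2=(n+2)/2>1$ for all $n\ge1$, so $\int_0^t s^{-n\lambda/2}\,ds=\infty$, and the claimed ``purely time-dependent contribution is $O(t^{-n/2})$'' is false. The bound $Hu\le(v+s^{-n/2})^\lambda$ is a genuine upper bound for an $L^1_{\mathrm{loc}}$ function, but the upper bound itself is not $L^1_{\mathrm{loc}}$, so it cannot be fed into the heat-potential operator pointwise. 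Second, the parabolic HLS gain of $1/p-1/q=\tfrac{2}{n+2}$ that you invoke degenerates exactly at this exponent: starting from $v\in L^1$, $v^{(n+2)/n}\in L^{n/(n+2)}$, which is already below $L^1$, and the computation in \eqref{eq4.67} of the iteration gain $1/p-1/q$ evaluates to precisely $0$ at $p=1$, $\lambda=\sigma=\tfrac{n+2}{n}$, $\varepsilon=0$ (and is strictly negative for $\varepsilon>0$). Third, Lemma \ref{lem4.6}, which you name as the rescue, is not usable in region $A$: it is formulated inside the proof of Lemma \ref{lem4.5}(ii), whose hypothesis \eqref{eq4.36} requires $\lambda>\tfrac{n+2}{n}$, and the positive uniform gain $C_0$ in \eqref{eq4.59} is obtained via the choice of $\varepsilon$ in \eqref{eq4.54}, which requires strict inequality $\sigma<\tfrac{2}{n}+\tfrac{n+2}{n\lambda}$; both fail on the boundary of region $A$. (Your description of Lemma \ref{lem4.6} as ``multiplying by powers of $u$, $v$ and integrating against heat-kernel weights'' also does not match the lemma, which iterates $L^p\to L^q$ heat-potential bounds.) The upshot is that the coupled bootstrap cannot reach the closed endpoint; the diagonal substitution $w=u+v$ is not a convenience but the step that makes the theorem true.
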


By Remark \ref{rem1}, the bounds \eqref{eq2.6} and \eqref{eq2.7} are optimal.

The following two theorems deal with the case that $(\lambda ,\sigma) \in B$.

\begin{thm}\label{thm2.2}
Suppose $u(x,t)$ and $v(x,t)$ are $C^{2,1}$ nonnegative solutions of the system
\eqref{eq2.1}
where the constants $\lambda$ and $\sigma$ satisfy
\begin{equation}\label{eq2.8}
\lambda>\frac{n+2}{n}\quad\text{and}\quad
  \sigma<\frac{2}{n}+\frac{n+2}{n\lambda}
\end{equation}
and $\Omega$ is an open subset of $\R^n$, $n\ge 1$. Then for each
compact subset $K$ of $\Omega$ we have
 \begin{equation}\label{eq2.9}
  \max_{x\in K}u(x,t)
=o\left(\left(\frac{1}{\sqrt{t}}\right)^{\frac{n^2\lambda}{n+2}}\right) 
\quad \text{ as }t\to0^+
 \end{equation}
and
 \begin{equation}\label{eq2.10}
  \max_{x\in K}v(x,t)=O\left(\left(\frac{1}{\sqrt{t}}\right)^n\right) 
\quad \text{ as }t\to0^+.
 \end{equation}
\end{thm}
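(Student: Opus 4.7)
The plan is to first establish the heat boundedness of $v$ (the bound \eqref{eq2.10}) by exploiting the coupling of the two inequalities through heat potentials, and then to feed this into the $u$--inequality to derive \eqref{eq2.9}. The two basic tools are the parabolic representation formula of Lemma~\ref{lem4.1} and the linear and nonlinear heat potential estimates developed in Section~\ref{sec3}.

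First I would apply Lemma~\ref{lem4.1} locally to $v$, which on a parabolic subcylinder of $\Omega\times(0,1)$ yields a representation
$$v(x,t) \le \iint \Phi(x-y,t-s)\bigl(u(y,s)+s^{-n/2}\bigr)^{\sigma}\,dy\,ds + R_v(x,t),$$
where $R_v$ is a bounded ``tail'' coming from the boundary of the cylinder, and analogously for $u$ with source $(v+s^{-n/2})^{\lambda}$. Substituting the $u$--representation into the integrand above produces a self--referential nonlinear--potential inequality, schematically of the form
$$v \;\lesssim\; \Phi^{(n+2-\alpha)/n} \ast \Bigl(\bigl(\Phi^{(n+2-\beta)/n}\ast v^{\lambda}\bigr)^{\sigma}\Bigr) + \text{lower order terms},$$
which is precisely the object estimated in Section~\ref{sec3}. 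The hypothesis $\sigma<\tfrac{2}{n}+\tfrac{n+2}{n\lambda}$ is the strict subcritical condition that makes the nonlinear heat potential estimate return an $L^{p}_{\mathrm{loc}}$ bound on $v$ with $p$ strictly above the threshold needed to run the Moser iteration of Lemma~\ref{lem4.6}. Applying Lemma~\ref{lem4.6} to the inequality $0\le v_t-\Delta v\le(u+t^{-n/2})^{\sigma}$ then upgrades this integrability estimate to the pointwise bound $v=O(t^{-n/2})$ on compact subsets, which is \eqref{eq2.10}.

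Having established heat boundedness of $v$, the right--hand side of the first inequality in \eqref{eq2.1} satisfies $(v+t^{-n/2})^{\lambda}=O(t^{-n\lambda/2})$ on each compact $K'$ compactly contained in $\Omega$. Applying Lemma~\ref{lem4.1} once more to $u$ and invoking the linear heat potential estimate of Section~\ref{sec3} for the convolution of $\Phi$ against a function whose $L^\infty(K')$ norm grows like $s^{-n\lambda/2}$, one obtains the pointwise bound $u(x,t)=O\bigl(t^{-n^{2}\lambda/(2(n+2))}\bigr)$ on $K$. The improvement from $O$ to $o$ in \eqref{eq2.9} is obtained by splitting the time integral at a small threshold $s_0$: the contribution from $s>s_0$ is absolutely bounded and hence of lower order, while on $s<s_0$ the constant in $v\le Cs^{-n/2}$ on $K'$ can, after a standard rescaling/continuity argument, be taken arbitrarily small, so its $\lambda$--th power yields an arbitrarily small prefactor in front of the $t^{-n^{2}\lambda/(2(n+2))}$ divergence.

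The main obstacle is the first step, namely producing the correct $L^{p}_{\mathrm{loc}}$ bound on $v$ with the sharp exponent dictated by the critical curve $\sigma=\tfrac{2}{n}+\tfrac{n+2}{n\lambda}$. The exponents $\alpha,\beta$ in the nonlinear potential estimate of Section~\ref{sec3} must be tuned so that the Hedberg--type gain exactly matches the threshold of the Moser iteration in Lemma~\ref{lem4.6}, and the tail terms $R_u,R_v$ from the representation formula must remain uniformly controlled across nested compact subsets $K\subset K'\subset K''\subset\Omega$ as the bootstrap is iterated. Once this closes, the remainder of the argument -- the conversion of $v=O(t^{-n/2})$ into the $o$--bound on $u$ -- is a more routine application of the linear heat potential estimates.
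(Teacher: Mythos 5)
Your high-level plan---establish $v=O(t^{-n/2})$ first, then feed this into the $u$--inequality---matches the paper's proof structure, but the mechanism you propose for the first step does not close, and your explanation of the $o$--conclusion is not correct.

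For the bound on $v$, you propose substituting the representation for $u$ into that for $v$ to obtain a self-referential nonlinear-potential inequality of the form $v \lesssim J_\alpha\ast((J_\beta\ast v^\lambda)^\sigma) + \ldots$, and then invoke the Section~\ref{sec3} estimate and Lemma~\ref{lem4.6}. But Theorem~\ref{nonlinear} estimates $\|J_\alpha\ast((J_\beta\ast f)^\sigma)\|_\infty$ in terms of both an $L^r$ and an $L^\infty$ norm of $f$; applying it to a self-referential inequality where $f$ depends on $v$ is circular unless you already control both norms, and you do not say how that is done. Moreover, Lemma~\ref{lem4.6} is not a general Moser iteration for $v$: it is tailored to rescaled sequences $\{r_j^\alpha f_j\}$ where $f_j$ is $Hu$ rescaled along a blow-up sequence. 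In the paper, Lemma~\ref{lem4.6} is deployed inside the proof of Lemma~\ref{lem4.5}(ii), and its output is a crude pointwise bound $Hu=O(t^{-\gamma/2})$ for some (possibly large) $\gamma>n+2$ on interior subsets---not a bound on $v$. The paper then feeds this into Lemma~\ref{lem4.4} and iterates: each pass replaces the exponent $p$ in $v=O(t^{-p/2})$ by $\max\{n,\,p\lambda(n\sigma-2)/(n+2)\}$, and the strict inequality $\sigma<2/n+(n+2)/(n\lambda)$ (equivalently $\lambda(n\sigma-2)/(n+2)<1$) is exactly what guarantees this iteration terminates at $p=n$ in finitely many steps. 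Your proposal conflates these two distinct iterations and leaves the crucial starting point unestablished---indeed, you flag this yourself as ``the main obstacle,'' but it is the heart of the argument, not a technicality.

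Your explanation of the $o$--bound in \eqref{eq2.9} is also not right. You claim the constant in $v\le Ct^{-n/2}$ ``can be taken arbitrarily small'' after rescaling, but this is false: $v=O(t^{-n/2})$ gives a fixed constant, and scaling does not shrink it. The actual source of the $o$ is different: once $Hu=O(t^{-n\lambda/2})$ with $n\lambda>n+2$, Lemma~\ref{lem4.4} gives $u=O(t^{-n/2})+o(t^{-n^2\lambda/(2(n+2))})$, and the $o$ there comes from \eqref{eq4.16}, namely $\iint_{\mathcal{P}_{t_j}(x_j,t_j)}Hu\to 0$ as $j\to\infty$, which is a consequence of the $L^1$--integrability of $Hu$ near $t=0$ furnished by the representation formula of Lemma~\ref{lem4.1} together with the fact that the parabolic boxes shrink to a point. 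Without that observation the $o$ cannot be extracted.
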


By the following theorem the bounds \eqref{eq2.9} and \eqref{eq2.10}
for $u$ and $v$ in Theorem \ref{thm2.2} are optimal.

\begin{thm}\label{thm2.3}
  Suppose $\lambda$ and $\sigma$ satisfy \eqref{eq2.8} and
  $\varphi:(0,1)\to(0,1)$ is a continuous function satisfying $\lim_{t\to
    0^{+}} \varphi(t)=0$.  Then there exist $C^\infty$ positive solutions
  $u(x,t)$ and $v(x,t)$ of the system
 \begin{equation}\label{eq2.11}
 \begin{aligned}
 0 & \le u_t-\Delta u\le v^\lambda\\
 0 & \le v_t-\Delta v\le u^\sigma
 \end{aligned}
 \text{\qquad in }\  (\R^n\times\R)\setminus \{(0,0)\}, \ n\geq 1,
 \end{equation}
 such that
\begin{equation}\label{eq2.13}
  u(0,t)\neq O\left(
    \varphi(t)\left(\frac{1}{\sqrt{t}}\right)^{\frac{n^2\lambda}{n+2}}\right)  
\quad \text{as }t\to0^+
 \end{equation}
 and
 \begin{equation}\label{eq2.14}
  \liminf_{t\to 0^+}v(0,t)t^{n/2} >0.
 \end{equation}
\end{thm}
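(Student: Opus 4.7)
For $v$, take $v(x,t)=\Phi(x,t)+1$, which is $C^\infty$ on $(\R^n\times\R)\setminus\{(0,0)\}$, strictly positive, satisfies $v_t-\Delta v=0$, and gives $v(0,t)\,t^{n/2}\ge(4\pi)^{-n/2}$, so \eqref{eq2.14} is immediate. The bulk of the argument constructs the companion $u$. The plan is to write $u=1+\Phi\ast F$, the heat potential of a nonnegative source $F\le\Phi^\lambda$ supported on a sequence of pairwise disjoint parabolic boxes $Q_k$ shrinking to $(0,0)$, each $Q_k$ engineered to produce a spike of $u(0,\cdot)$ at time $t_k$ that defeats \eqref{eq2.13}.

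Using the hypotheses that $\varphi$ is continuous and $\varphi\to 0^+$, select $t_k\downarrow 0$ sparse enough that $t_{k+1}<t_k/4$ and $\varphi(t_k)\le 4^{-k}$, and set
\[
r_k:=\varphi(t_k)^{1/4}\,t_k^{n\lambda/(2(n+2))},\qquad Q_k:=\overline{B(0,r_k)}\times\bigl[t_k-r_k^2,\,t_k-\tfrac12 r_k^2\bigr].
\]
Because $\lambda>(n+2)/n$, one has $n\lambda/(n+2)>1$, so $r_k=o(\sqrt{t_k})$ and the $Q_k$'s are pairwise disjoint subsets of $\R^n\times(0,1)$. Pick smooth bumps $\psi_k\colon\R^n\times\R\to[0,1]$ with $\operatorname{supp}\psi_k\subset Q_k$ and $\psi_k\equiv 1$ on an inner sub-box $Q_k'\subset Q_k$ of the same parabolic scale. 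Define $F:=\sum_k\Phi^\lambda\psi_k$ and $u:=1+\Phi\ast F$. The disjoint supports and $\psi_k\le 1$ yield $0\le F\le\Phi^\lambda\le v^\lambda$ (using $v\ge\Phi$ and $\lambda>0$), so $(\partial_t-\Delta)u=F$ satisfies the first line of \eqref{eq2.11}; the second is trivial from $u\ge 1>0$.

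The verification then reduces to two items. First, that $u\in C^\infty$ off $(0,0)$: each $u_k:=\Phi\ast(\Phi^\lambda\psi_k)$ is $C^\infty$ on all of $\R^n\times\R$ as the convolution of $\Phi$ with a $C_c^\infty$ function, and on any compact $K\subset(\R^n\times\R)\setminus\{(0,0)\}$ the mass estimate
\[
\|\Phi^\lambda\psi_k\|_{L^1}\le C\,t_k^{-n\lambda/2}\,r_k^{n+2}=C\,\varphi(t_k)^{(n+2)/4}\le C\,4^{-k(n+2)/4},
\]
together with uniform bounds on $\Phi$ and its derivatives over $\{(x-y,t-s):(x,t)\in K,\,(y,s)\in Q_k\}$ for large $k$ (since $Q_k\to(0,0)$ while $K$ is compact in the punctured space), gives uniform convergence of $\sum u_k$ and all its derivatives on $K$. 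Second, the spike estimate: on $Q_k'$ we have $\Phi(y,s)\ge c\,t_k^{-n/2}$, while $\int_{Q_k'}\Phi(-y,t_k-s)\,dy\,ds\ge c\,r_k^2$ follows from $\int_{\R^n}\Phi(-y,\tau)\,dy=1$ and the time-duration $\sim r_k^2$; hence
\[
u(0,t_k)\ge u_k(0,t_k)\ge c\,t_k^{-n\lambda/2}\cdot r_k^2=c\,\varphi(t_k)^{1/2}\,t_k^{-n^2\lambda/(2(n+2))},
\]
which gives $u(0,t_k)\big/\bigl(\varphi(t_k)\,t_k^{-n^2\lambda/(2(n+2))}\bigr)\ge c\,\varphi(t_k)^{-1/2}\to\infty$, i.e.\ \eqref{eq2.13}.

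The main obstacle is the precise calibration of $r_k$: it has to be large enough that $u_k(0,t_k)\sim t_k^{-n\lambda/2}\,r_k^2$ exceeds the target $\varphi(t_k)\,t_k^{-n^2\lambda/(2(n+2))}$, yet small enough that the masses $\|\Phi^\lambda\psi_k\|_{L^1}\sim t_k^{-n\lambda/2}\,r_k^{n+2}$ form a summable series so that $u$ is finite and smooth off the origin. These opposing demands force the scaling $r_k^2\sim t_k^{n\lambda/(n+2)}$, matching the critical exponent on the boundary curve $D$ between regions $B$ and $C$, with the small factor $\varphi(t_k)^{1/2}$ providing the slack needed to handle an arbitrary slowly-decaying $\varphi$.
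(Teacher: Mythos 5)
Your construction is correct and follows essentially the same strategy as the paper: build $u$ as the heat potential of a sequence of $L^1$-summable source bumps concentrated on parabolic cells shrinking to $(0,0)$ at times $t_k\downarrow 0$, with the parabolic scale $r_k^2\sim\varphi(t_k)^{1/2}t_k^{n\lambda/(n+2)}$ calibrated so that the spike $u(0,t_k)\gtrsim\varphi(t_k)^{1/2}t_k^{-n^2\lambda/(2(n+2))}$ defeats the bound while the total mass converges, and take $v$ a constant multiple (or shift) of the heat kernel so that $Hv=0$ and the constraint $Hu\le v^\lambda$ holds on each cell. The only differences from the paper are cosmetic (boxes versus heat balls, $\Phi^\lambda$-height bumps versus constant-height bumps, $v=\Phi+1$ versus $v=\Phi/M$, and placing the $\varphi$-slack in $r_k$ rather than in the bump mass).
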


The following theorem deals with the case that $(\lambda ,\sigma)\in
C$.  In this case there exist pointwise bounds for neither $u$ nor
$v$.

\begin{thm}\label{thm2.4}
 Suppose $\lambda$ and $\sigma$ are constants satisfying
 \begin{equation}\label{eq2.15}
  \lambda>\frac{n+2}{n}\quad\text{and}\quad
\frac{2}{n}+\frac{n+2}{n\lambda} <\sigma\leq\lambda.
 \end{equation}
 Let $\varphi:(0,1)\to(0,\infty)$ be a continuous function satisfying
 \begin{equation*}
  \lim_{t\to0^{+}} \varphi(t)=\infty.
 \end{equation*}
 Then there exist $C^\infty$ solutions $u(x,t)$ and $v(x,t)$ of the system
 \begin{equation}\label{eq2.16}
 \left. 
 \begin{aligned}
 0 & \le u_t-\Delta u\le v^\lambda\\
 0 & \le v_t-\Delta v\le u^\sigma\\
 u & >1, \, v>1
 \end{aligned}
 \right\}
 \text{\qquad in }\ (\R^n\times\R)\setminus \{(0,0)\}, \ n\geq 1, 
 \end{equation}
 such that
 \begin{equation}\label{eq2.17}
  u(0,t)\neq O(\varphi(t)) \quad \text{as }t\to0^+
 \end{equation}
 and
 \begin{equation}\label{eq2.18}
  v(0,t)\neq O(\varphi(t))\quad \text{as }t\to0^+.
 \end{equation}
\end{thm}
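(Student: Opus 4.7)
\textbf{Proof plan for Theorem \ref{thm2.4}.} The plan is to construct $u,v$ as $2$ plus an infinite sum of rescaled ``bumps'' concentrated at space-time points $(0,\tau_k)$ with $\tau_k\downarrow 0$, with amplitudes tuned so that $u(0,\tau_k)/\varphi(\tau_k)\to\infty$ and $v(0,\tau_k)/\varphi(\tau_k)\to\infty$. I would choose once and for all a pair of model functions $\Psi_1,\Psi_2\in C^\infty(\R^n\times\R)$, nonnegative, decaying at infinity like heat-kernel tails, and satisfying $(\partial_t-\Delta)\Psi_i = \rho_i\ge 0$ for compactly supported $C^\infty$ sources $\rho_i$, with the \emph{mutual positivity} property $\Psi_2\ge c>0$ on $\operatorname{supp}\rho_1$ and $\Psi_1\ge c>0$ on $\operatorname{supp}\rho_2$, and with $\Psi_1(0,0),\Psi_2(0,0)>0$.

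For each $k$ I would set
\[
u_k(x,t) = A_k\,\Psi_1\!\left(\tfrac{x}{r_k},\tfrac{t-\tau_k}{r_k^2}\right),\qquad
v_k(x,t) = B_k\,\Psi_2\!\left(\tfrac{x}{r_k},\tfrac{t-\tau_k}{r_k^2}\right),
\]
so that $(\partial_t-\Delta)u_k = (A_k/r_k^2)\rho_1$ in rescaled coordinates. The bump-level inequalities $(\partial_t-\Delta)u_k\le v_k^\lambda$ and $(\partial_t-\Delta)v_k\le u_k^\sigma$ reduce, via mutual positivity, to the algebraic balance $A_k\le Cr_k^2 B_k^\lambda$ and $B_k\le Cr_k^2 A_k^\sigma$. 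Saturating these (valid since $\sigma\lambda>1$ in region $C$) yields $A_k\sim r_k^{-2(\lambda+1)/(\sigma\lambda-1)}$ and $B_k\sim r_k^{-2(\sigma+1)/(\sigma\lambda-1)}$. I would then pick $r_k\downarrow 0$ fast enough that $A_k,B_k\ge \varphi(\tau_k)$, the source supports of the bumps are pairwise disjoint, and the series $\sum_k A_kr_k^n$ and $\sum_k B_kr_k^n$ converge. Here the defining inequality $\sigma>\tfrac{2}{n}+\tfrac{n+2}{n\lambda}$ of region $C$ is \emph{algebraically equivalent} to $n>2(\lambda+1)/(\sigma\lambda-1)$; combined with $\lambda>(n+2)/n$ it also gives $n>2(\sigma+1)/(\sigma\lambda-1)$, so both summability conditions are met.

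Set $u = 2 + \sum_k u_k$ and $v = 2 + \sum_k v_k$. Summability, together with heat-kernel tail estimates for $\Psi_i$ and its derivatives, gives uniform convergence of all partial derivatives on compact subsets of $(\R^n\times\R)\setminus\{(0,0)\}$, so $u,v\in C^\infty$ there and $u,v>1$ by the constant. On each $k$-th source support only the single term $(\partial_t-\Delta)u_k$ (resp.\ $(\partial_t-\Delta)v_k$) is nonzero by disjointness, and $v\ge 2+v_k$ (resp.\ $u\ge 2+u_k$) there; hence the full system inequality reduces to the bump-level inequality already verified, while off all sources it reduces to $0\le v^\lambda$. Finally $u(0,\tau_k)\ge A_k\Psi_1(0,0)+2-O(1)\gg\varphi(\tau_k)$ gives \eqref{eq2.17}, and \eqref{eq2.18} follows by the symmetric argument for $v$.

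The chief technical obstacle is constructing the model pair $(\Psi_1,\Psi_2)$ satisfying mutual positivity while keeping the sources $\rho_i$ compactly supported. A pure heat potential $\Phi\ast\rho_i$ vanishes for times earlier than $\operatorname{supp}\rho_i$, so neither $\Psi_i$ can be a single heat potential without placing one of the other's sources into a ``dead zone,'' which would impose an upper bound $A_k\le r_k^2/\|\rho\|_\infty$ incompatible with the required blow-up. One must therefore interleave several source pulses in time, or combine a heat potential with a translated, time-shifted fundamental solution that is smoothly turned on by a cutoff (its singular point chosen away from $(0,0)$), and carefully arrange the amplitudes so that no such upper-bound constraint spoils the scaling. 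The second delicate point is the algebraic miracle that the $C$-region hypothesis is precisely what makes $A_kr_k^n$ and $B_kr_k^n$ both summable even as $A_k,B_k\to\infty$.
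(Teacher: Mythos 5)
Your overall skeleton and the key algebraic observation are both on target and match the spirit of the paper's proof: the exponents you extract, $A_k\sim r_k^{-2p}$ and $B_k\sim r_k^{-2q}$ with $p=\tfrac{\lambda+1}{\lambda\sigma-1}$ and $q=\tfrac{\sigma+1}{\lambda\sigma-1}$, are exactly the exponents the paper uses (there written as $w_j(t)=(T_j-t)^{-p}$, $z_j(t)=(T_j-t)^{-q}$), and your identification of the region-$C$ hypothesis with $p<n/2$ is precisely computation \eqref{eq5.10} in the paper, which is what makes the source masses summable. However, there is a genuine gap, and it is located exactly at what you call the chief technical obstacle: constructing a self-similar model pair $(\Psi_1,\Psi_2)$ with compactly supported nonnegative sources and the mutual-positivity property. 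Neither of your proposed fixes resolves it. Any caloric-bounded-below solution of $(\partial_t-\Delta)\Psi=\rho\ge0$ with $\rho$ compactly supported and $\Psi\to0$ as $t\to-\infty$ vanishes identically for $t$ below the earliest slice of $\operatorname{supp}\rho$. Interleaving finitely many pulses in time only pushes the problem to whichever of $\rho_1,\rho_2$ has the earliest pulse: there is always a first pulse, and the other $\Psi_i$ is zero on it. A time-shifted, cut-off fundamental solution has the same defect, because the new source introduced by the cutoff again has an earliest slice that precedes the support of the other function's source. Adding the constant background $2$ does not help either, since on the earliest pulse it forces $A_k/r_k^2\lesssim 2^\lambda$, i.e.\ $A_k\to0$, contradicting the required blow-up.

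The paper's proof avoids this chicken-and-egg problem by two devices you would need to import. First, the bumps are not self-similar copies of a fixed profile: they are heat potentials of the time-increasing sources $w_j'(t)\chi_j$ and $z_j'(t)\chi_j$, where $w_j,z_j$ solve the coupled ODE system $w'=pz^\lambda$, $z'=qw^\sigma$. The source magnitude at every time is thus automatically proportional to the appropriate power of the \emph{current} size of the other solution, so there is no single ``source time'' that outruns the solutions. Second, and crucially, the paper adds the term $M\Phi(x,t)$ to both $u$ and $v$, which on the parabolic cylinder $\Omega_j$ is comparable to $t_j^{-n/2}$, and the parameters $t_j$, $T_j$ are chosen (via $w_j(t_j)=t_j^{-n/2}$) so that $M\Phi$ dominates the small negative correction $-2w_j(t_j)$ (resp.\ $-2z_j(t_j)$) coming from the heat potential near the start of the time interval; see \eqref{M}--\eqref{eq5.23}. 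This $\Phi$-floor is what covers the ``early'' part of the source that your $\Psi_2$ (resp.\ $\Psi_1$) cannot. Without something like it, the scaling $A_k\sim r_k^{-2p}$ you derived assumes the mutual positivity you cannot arrange, and the whole balance collapses. If you replace the fixed-profile bump with the ODE-driven source and add the $M\Phi$ floor with the paper's choice of $t_j$ relative to $T_j$, the rest of your outline (disjoint supports, summability via $p<n/2$, blow-up of $u(0,a_j)$ against $\varphi$) goes through.
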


The following theorem can be viewed as the limiting case of Theorem
\ref{thm2.2} as $\lambda\to\infty$.

\begin{thm}\label{thm2.5}
Suppose $u(x,t)$ and $v(x,t)$ are $C^{2,1}$ nonnegative solutions of the system
\[
\begin{aligned}
0&\le u_t-\Delta u\\
0&\le v_t-\Delta v\le \left(u+\left(\frac{1}{\sqrt{t}}\right)^n\right)^\sigma
\end{aligned} \qquad \mbox{ in } \Omega\times (0,1), 
\]
where $\sigma<2/n$
and $\Omega$ is an open subset of $\R^n$, $n\ge 1$. Then $v$ is heat bounded in compact subsets of $\Omega$
as $t\to0^+$, that is, for each compact subset $K$ of $\Omega$ we have
 \begin{equation}\label{eq2.19}
  \max_{x\in K}v(x,t)=O\left(\left(\frac{1}{\sqrt{t}}\right)^n\right) 
\quad \text{ as }t\to0^+.
 \end{equation}
\end{thm}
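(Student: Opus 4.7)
My plan is to follow the limiting case $\lambda\to\infty$ of the argument for Theorem~\ref{thm2.2}: without an upper bound on $u_t-\Delta u$ we retain only that $u$ is a nonnegative super-temperature, and the whole effort is directed at the $v$-inequality. Fix a compact $K\subset\Omega$ and a slightly larger compact $K'$ with $K\Subset K'\Subset\Omega$. A standard cutoff combined with Lemma~\ref{lem4.1} applied to $v$ yields, for $(x,t)\in K\times(0,T)$ with $T$ small,
\[
v(x,t)\le C\,t^{-n/2}+\int_0^t\!\int_{K'}\Phi(x-y,t-s)\,f(y,s)\,dy\,ds,
\]
where $f:=v_t-\Delta v\ge 0$ and the $O(t^{-n/2})$ term absorbs the caloric correction (heat-kernel bound for the initial trace of a nonnegative caloric function on compact subsets).

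From the elementary inequality $(a+b)^\sigma\le C_\sigma(a^\sigma+b^\sigma)$ one splits $f\le C_\sigma(u^\sigma+t^{-n\sigma/2})$. Since $\sigma<2/n$, the explicit piece contributes
\[
\int_0^t\!\int_{K'}\Phi(x-y,t-s)\,s^{-n\sigma/2}\,dy\,ds\le\int_0^t s^{-n\sigma/2}\,ds=O(t^{\,1-n\sigma/2})=o(1).
\]
To handle the $u^\sigma$ piece, I apply Lemma~\ref{lem4.1} a second time to $u$ on a slightly larger subcylinder, producing a decomposition $u=\Phi\ast\mu+H_u$ with $\mu:=u_t-\Delta u\ge 0$ a locally finite Radon measure and $H_u\ge 0$ caloric, hence at most $C\,t^{-n/2}$ on compact subsets. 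Splitting once more, the $H_u^\sigma$ piece yields $O(t^{1-n\sigma/2})=o(1)$, and what remains is the nonlinear heat potential $\Phi\ast\bigl((\Phi\ast\mu)^\sigma\bigr)$, the parabolic analog of the Mizuta--Hedberg nonlinear Riesz potential $\Gamma\ast\bigl((\Gamma\ast f)^\sigma\bigr)$. The estimates of Section~\ref{sec3}, which rest on a Hardy--Littlewood maximal inequality over heat balls, identify $\sigma<2/n$ as exactly the range in which this quantity is bounded on compact subsets by the local mass of $\mu$. Summing the three contributions gives $\max_{x\in K}v(x,t)=O(t^{-n/2})$ as $t\to 0^+$, which is the claim.

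The crux is the last step: invoking the nonlinear heat potential estimate of Section~\ref{sec3} at the borderline exponent $\sigma<2/n$, and verifying that the caloric contributions from both applications of Lemma~\ref{lem4.1} remain uniformly no worse than $O(t^{-n/2})$ as $t\to 0^+$. Once these tools are in place, the other steps are routine reductions.
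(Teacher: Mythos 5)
Your strategy is sound and, if written out carefully, gives a valid proof; but it differs structurally from the paper's, and the crucial step as you describe it is pointing at the wrong estimate from Section~\ref{sec3}. The paper proves Theorem~\ref{thm2.5} by citing Lemma~\ref{lem4.5}(i), whose proof is a contradiction argument: a sequence $(x_j,t_j)$ with $t_j^{n/2}v(x_j,t_j)\to\infty$ is extracted, Lemma~\ref{lem4.3} is used to rescale to unit heat balls where the rescaled $L^1$ mass of $Hu$ vanishes, and then Theorem~\ref{HeatPotential} (with $p=1$, $\alpha=2$) together with a H\"older pairing against $\Phi^*\in L^{\frac{n+2}{n+2\varepsilon}}$ shows the nonlinear contribution vanishes. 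Your proposal bypasses the contradiction and rescaling and works directly with two applications of the Brezis--Lions--type Lemma~\ref{lem4.1} (to $v$, then to $u$ inside the integrand), reducing to showing that the local nonlinear heat potential $\Phi*\bigl((\Phi*\mu)^\sigma\bigr)$ is bounded on the compact set. That is a genuine and arguably cleaner route: boundedness, rather than the paper's ``$\to 0$ along the bad sequence,'' already implies $O(t^{-n/2})$, and you avoid the diagonalization machinery of Lemma~\ref{lem4.3}.

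The caveat is in your last step. You invoke ``the estimates of Section~\ref{sec3}\dots which rest on a Hardy--Littlewood maximal inequality over heat balls,'' i.e.\ Theorem~\ref{nonlinear} (the analog of Maz'ya--Havin). But that theorem bounds $\|J_\alpha*((J_\beta* f)^\sigma)\|_\infty$ by a product involving $\|f\|_{L^\infty}$, and here the input is $\mu = Hu$, which Lemma~\ref{lem4.1} gives you only in $L^1$ locally---an $L^\infty$ bound on $Hu$ is precisely what you do \emph{not} have (and cannot have, since $Hu$ is merely a nonnegative measure). Theorem~\ref{thm3.4} is also unavailable because it requires $p>1$. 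The correct tool is Theorem~\ref{HeatPotential}, whose whole point (as the paper remarks) is that it admits the endpoints $p=1$ and $q=\infty$ at the cost of a strict inequality $\tfrac1p-\tfrac1q<\tfrac{\alpha}{n+2}$. Applying it once with $p=1$, $\alpha=2$ gives $\Phi*\mu\in L^q$ for any $q<\tfrac{n+2}{n}$ with norm controlled by $\|Hu\|_{L^1}$; then $(\Phi*\mu)^\sigma\in L^{q/\sigma}$ with $q/\sigma>\tfrac{n+2}{2}$ (this is exactly where $\sigma<2/n$ enters), and a second application with that $p$ and $q=\infty$ gives the $L^\infty$ bound you need. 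With Theorem~\ref{HeatPotential} substituted for Theorem~\ref{nonlinear} in your final paragraph, your argument goes through, and it is a correct direct alternative to the paper's contradiction proof of Lemma~\ref{lem4.5}(i).
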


By Remark \ref{rem1}, the bound \eqref{eq2.19} is optimal.

A consequence of the methods we use to prove the results in this
section is the following simple, optimal, and apparently unknown
result. The proof is, however, nontrivial being based on the
representation formula in Lemma \ref{lem4.1}.

\begin{thm}\label{thm2.6}
Suppose $u$ is a $C^{2,1}$ nonnegative solution of 
\[
0\le u_t-\Delta u\le\left(\frac{1}{\sqrt{t}}\right)^\gamma \quad
\text{in }\Omega\times(0,1),
\]
where $\gamma\in\R$ and $\Omega$ is an open subset of $\R^n$, $n\ge 1$.
Then for each compact subset $K$ of $\Omega$ we have
\[
\max_{x\in K}u(x,t)
=\begin{cases}
o\left(\left(\frac{1}{\sqrt t}\right)^{\gamma\frac{n}{n+2}}\right)
&\text{if $\gamma>n+2$}\\
O\left(\left(\frac{1}{\sqrt t}\right)^n\right)
&\text{if $\gamma\le n+2$}
\end{cases}
\quad\text{as }t\to 0^+.
\]
\end{thm}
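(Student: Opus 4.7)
The plan is to apply the parabolic Brezis--Lions representation formula (Lemma~\ref{lem4.1}) to the nonnegative supertemperature $u$. Fix a compact set $K\subset\Omega$ and an open $\Omega'$ with $K\subset\Omega'\subset\subset\Omega$, and set $f:=u_t-\Delta u$, so $0\le f(y,s)\le s^{-\gamma/2}$ on $\Omega'\times(0,1)$. Lemma~\ref{lem4.1} provides a finite nonnegative Borel measure $\mu$ on $\Omega'$ (the initial trace of $u$) and a function $h$ bounded on $K\times(0,T_0)$ for some $T_0\in(0,1)$, such that for $(x,t)\in K\times(0,T_0)$,
\[
u(x,t)\ \le\ I(x,t)\ +\ \int_{\Omega'}\Phi(x-y,t)\,d\mu(y)\ +\ h(x,t),
\]
where $I(x,t):=\int_0^t\int_{\Omega'}\Phi(x-y,t-s)f(y,s)\,dy\,ds$ is the heat potential of $f$. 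The measure term is at most $\mu(\Omega')(4\pi t)^{-n/2}=O(t^{-n/2})$ and $h=O(1)$, so the task reduces to bounding $I$.

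For that, I first extract an $L^1$ bound on $f$: inserting any fixed $(x_\ast,t_\ast)\in K\times(T_0/2,T_0)$ into the representation and using $\Phi(x_\ast-y,t_\ast-s)\ge c_0>0$ uniformly on $\Omega'\times(0,T_0/4)$ together with the finiteness of $u(x_\ast,t_\ast)$ yields $\|f\|_{L^1(\Omega'\times(0,T_0))}\le C$. Now split $I=I_1+I_2$ at $s=t/2$. For $I_1$ the pointwise bound $f\le s^{-\gamma/2}$ together with $\int_{\Omega'}\Phi(x-y,t-s)\,dy\le 1$ gives
\[
I_1\ \le\ \int_{t/2}^{t}s^{-\gamma/2}\,ds\ =\ \begin{cases}O(1)&\text{if }\gamma<2,\\O(\log(1/t))&\text{if }\gamma=2,\\O(t^{1-\gamma/2})&\text{if }\gamma>2.\end{cases}
\]
For $I_2$, since $s\le t/2$ forces $\Phi(x-y,t-s)\le Ct^{-n/2}$, coupling this with the $L^1$ bound on $f$ yields $I_2\le Ct^{-n/2}\|f\|_{L^1(\Omega'\times(0,T_0))}=O(t^{-n/2})$.

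If $\gamma\le n+2$ then $1-\gamma/2\ge -n/2$, so all three cases of the $I_1$ estimate are $O(t^{-n/2})$; combined with the measure and $h$ contributions this gives $u(x,t)=O(t^{-n/2})$, proving the theorem in this range. When $\gamma>n+2$ the crude bound $I_1=O(t^{1-\gamma/2})$ is \emph{more} singular than the target $t^{-\gamma n/(2(n+2))}$ (since $(\gamma-2)/2>\gamma n/(2(n+2))$ precisely when $\gamma>n+2$), and is therefore insufficient; this is the main obstacle. The strategy to close this case is to let the splitting threshold $t_0=t_0(t)\to 0$ depend on $t$ and to exploit the absolute continuity $\|f\|_{L^1(\Omega'\times(0,t_0))}\to 0$ as $t_0\to 0^+$, which provides the required little-$o$ factor; the sharp exponent $\gamma n/(2(n+2))$ itself reflects parabolic Sobolev-type scaling in the effective dimension $n+2$ built into the heat kernel convolution.
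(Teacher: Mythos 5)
Your approach is correct and genuinely different from the paper's. The paper derives Theorem~\ref{thm2.6} as an immediate corollary of the estimate \eqref{eq4.22} in Lemma~\ref{lem4.4}, whose proof proceeds by contradiction, localizes via Lemma~\ref{lem4.3} to heat parabolic cylinders $\mathcal{P}_{t_j/2}(x_j,t_j)$, and then uses a heat-ball rearrangement (bathtub) argument: among all densities $Hu$ with the given local mass $\iint_{\mathcal{P}_{t_j/2}}Hu\to 0$ and the pointwise bound $Hu\le At_j^{-\gamma/2}$, the heat potential at the center is maximized when the mass is concentrated on the heat ball $\mathcal{E}_{r_j}$ where $\Phi$ is largest, giving the exponent $\gamma n/(n+2)$. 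You instead go directly from the representation of Lemma~\ref{lem4.1} and trade the rearrangement for a time-splitting of $I$ at $s=t-\rho(t)$: the near piece is controlled by $f\le s^{-\gamma/2}$ and $\int_{\R^n}\Phi\,dy=1$ to give $I_1\le C\rho\,t^{-\gamma/2}$, while the far piece uses $\Phi\le C\rho^{-n/2}$ and the $L^1$ bound $\varepsilon(t):=\|Hu\|_{L^1(\Omega'\times(0,t))}\to 0$ to give $I_2\le C\rho^{-n/2}\varepsilon(t)$; balancing at $\rho=\bigl(t^{\gamma/2}\varepsilon(t)\bigr)^{2/(n+2)}$ (which is $\le t/2$ for small $t$ since $\gamma>n+2$) yields $I=O\!\left(t^{-\gamma n/(2(n+2))}\varepsilon(t)^{2/(n+2)}\right)=o\!\left(t^{-\gamma n/(2(n+2))}\right)$, and the measure and $h$ terms are $O(t^{-n/2})=o(t^{-\gamma n/(2(n+2))})$ in this range, closing the argument. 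Note two small things: you only sketched the balancing step (it does work exactly as above, with the exponent emerging from $\rho^{(n+2)/2}\sim t^{\gamma/2}\varepsilon(t)$, not from a ``Sobolev scaling'' heuristic), and your $O(\log(1/t))$ for $\gamma=2$ is overly generous (that integral is $\log 2=O(1)$), though harmless since it is still $O(t^{-n/2})$. Your route is more elementary and self-contained for this particular statement, bypassing Lemmas~\ref{lem4.2} and~\ref{lem4.3}; the paper's heat-ball rearrangement is more machinery but is reused throughout Section~\ref{sec4} for the system estimates.
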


In the next section we shall derive the analog of Hedberg's
inequality for heat potentials as well as estimates for nonlinear
heat potential which are crucial tools of our approach. More
preliminary results are provided in Section \ref{sec4}. The proof of
the main results will be given in Section~\ref{sec5}.

\section{Hedberg's inequality for heat potentials and nonlinear heat
  potential estimates}\label{sec3}

We define $J_\alpha:\R^n\times\R\to[0,\infty)$ for $n\ge 1$ and
$0<\alpha<n+2$ by 
\begin{equation}\label{eq 3.1}
J_\alpha(x,t)=\Phi(x,t)^{\frac{n+2-\alpha}{n}}
\end{equation}
where $\Phi$ is the heat kernel \eqref{eq1.5}. If $f:\R^n\times\R\to\R$ is
a nonnegative measurable function then we call the
convolution $J_\alpha\ast f:\R^n\times\R\to [0,\infty]$, given by 
\begin{equation}\label{eq3.2}
(J_\alpha\ast f)(x,t)
=\iint_{\R^n\times\R}\Phi(x-y,t-s)^{\frac{n+2-\alpha}{n}}f(y,s)\,dy\,ds, 
\end{equation}
a heat potential of $f$.

The main result in this section is the following theorem which gives
estimates for the nonlinear potential $J_\alpha \ast ((J_\beta\ast
f)^\sigma)$. This potential is the nonlinear heat potential analog of the
nonlinear Riesz potential first studied by Maz'ya and Havin
\cite{MH}. See also \cite[Chapter 10]{M}.

\begin{thm}\label{nonlinear}
  Suppose 
\begin{equation}\label{assumptions}
\alpha,\beta\in (0,n+2), \quad \sigma>\frac{\alpha}{n+2-\beta},
\quad \text{and}\quad 1\le r<\frac{(n+2)\sigma}{\alpha+\beta\sigma}.
\end{equation}
Then there exists a constant
  $C=C(n,\alpha,\beta,\sigma, r)>0$ such that for all nonnegative
  measurable functions $f:\R^n\times\R\to\R$, $n\ge 1$, we have
$$ \| J_\alpha\ast((J_\beta\ast f)^\sigma)\|_{L^\infty(\R^n\times\R)}
\leq C\|f\|_{L^r(\R^n\times\R)}^{\frac{(\alpha+\beta\sigma)r}{n+2}}
  \|f\|_{L^\infty(\R^n\times\R)}^{\frac{\sigma(n+2-\beta r)-\alpha r}{n+2}}.
$$
\end{thm}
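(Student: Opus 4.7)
The plan is to adapt the Maz'ya--Havin strategy \cite{MH} for the nonlinear Riesz potential to the heat-potential setting, leveraging the Hedberg-type inequality and the Hardy--Littlewood--Sobolev inequality for heat potentials that are proved earlier in this section. First I would apply the heat Hedberg inequality: for $1\leq p<(n+2)/\beta$, and with $M$ the heat-ball maximal operator,
\[
(J_\beta\ast f)(y,s)\leq C\,(Mf)(y,s)^{1-\beta p/(n+2)}\,\|f\|_{L^p}^{\beta p/(n+2)}.
\]
Taking $p=r$ and using the trivial bound $Mf\leq\|f\|_\infty$ yields
\[
\|J_\beta\ast f\|_{L^\infty}\leq C\,\|f\|_{L^r}^{\beta r/(n+2)}\|f\|_{L^\infty}^{1-\beta r/(n+2)},
\]
while the heat Hardy--Littlewood--Sobolev inequality independently gives $\|J_\beta\ast f\|_{L^{r^*}}\leq C\|f\|_{L^r}$ with $r^*:=(n+2)r/(n+2-\beta r)$.

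Setting $g:=(J_\beta\ast f)^\sigma$, these two bounds translate into
\[
\|g\|_{L^\infty}\leq C\|f\|_{L^r}^{\sigma\beta r/(n+2)}\|f\|_{L^\infty}^{\sigma(n+2-\beta r)/(n+2)},\qquad \|g\|_{L^{r^*/\sigma}}\leq C\|f\|_{L^r}^\sigma.
\]
Applying Hedberg a second time to $J_\alpha\ast g$ with parameter $q:=r^*/\sigma$ and using $Mg\leq\|g\|_\infty$ gives
\[
\|J_\alpha\ast g\|_{L^\infty}\leq C\|g\|_{L^q}^{\alpha q/(n+2)}\|g\|_{L^\infty}^{1-\alpha q/(n+2)}.
\]
Substituting the previous displays and simplifying via the identity $\sigma q(n+2-\beta r)=(n+2)r$, the exponent of $\|f\|_{L^r}$ collapses to $(\alpha+\beta\sigma)r/(n+2)$ and the exponent of $\|f\|_{L^\infty}$ to $(\sigma(n+2-\beta r)-\alpha r)/(n+2)$, which is the claimed inequality. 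A short calculation then shows that the Hedberg range condition $q<(n+2)/\alpha$ is equivalent to the upper bound $r<(n+2)\sigma/(\alpha+\beta\sigma)$ assumed in the theorem, and that the hypothesis $\sigma>\alpha/(n+2-\beta)$ is precisely what makes this range non-empty.

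The principal obstacle is the range where $q=r^*/\sigma<1$, which can occur for $r$ close to $1$ when $\sigma$ is large; there $\|g\|_{L^q}$ is not a genuine norm and the usual Hedberg decomposition fails. In this regime I would bypass the norm-form Hedberg inequality and work directly with the pointwise splitting that underlies it: at a fixed base point $(x_0,t_0)$, decompose $J_\alpha\ast g(x_0,t_0)$ into a near piece over a heat ball of radius $R$, controlled by $C\|g\|_{L^\infty}R^\alpha$, and a far piece estimated by H\"older against $J_\beta\ast f\in L^{r^*}$ together with the tail decay of $J_\alpha$ outside the heat ball, and then optimize over $R$. This two-scale parabolic splitting delivers the same exponents and completes the argument.
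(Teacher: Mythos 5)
Your outline is essentially the same strategy as the paper (Hedberg's inequality twice, the heat Sobolev inequality once, then algebra on the exponents), and the exponent computation in your middle paragraph is correct. The difference is that you try to apply Theorem~\ref{thm3.4} at the exponent $r$ itself and Theorem~\ref{thm3.2} at the exponent $q=r^*/\sigma$ directly, whereas the paper introduces an intermediate exponent $s\in(r,(n+2)/\beta)$, defines $p\in(1,(n+2)/\alpha)$ by $p\sigma=(n+2)s/(n+2-\beta s)$, applies the Sobolev inequality at $s$, and then removes $s$ via the interpolation $\|f\|_s\le\|f\|_r^{r/s}\|f\|_\infty^{(s-r)/s}$. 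This maneuver is not cosmetic: it is exactly what makes the argument valid for all $r$ and $\sigma$ in the hypotheses.

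There are two genuine gaps in your version. First, the case $r=1$, which is permitted by \eqref{assumptions} and is actually the case used later in the proof of Lemma~\ref{lem4.4}, is not covered by Theorem~\ref{thm3.4}: that theorem requires $p>1$ strictly (the heat-ball maximal operator is only weak type $(1,1)$), so the step $\|J_\beta\ast f\|_{L^{r^*}}\le C\|f\|_{L^r}$ is not available at $r=1$. You do not address this. Second, you correctly flag that $q=r^*/\sigma$ may be less than~$1$, but the proposed workaround is not watertight as stated. In the two-scale splitting, the far piece is $\iint_{(E_R)^c}\Phi^{(n+2-\alpha)/n}\,g$ with $g=(J_\beta\ast f)^\sigma$; a H\"older pairing of $\Phi^{(n+2-\alpha)/n}$ against $g$ requires an exponent $\ge 1$ on $g$, which is precisely what is missing when $r^*/\sigma<1$. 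Writing $g=(J_\beta\ast f)^{\sigma'}(J_\beta\ast f)^{\sigma-\sigma'}$ and absorbing the second factor into $\|J_\beta\ast f\|_\infty^{\sigma-\sigma'}$ can work, but then one must check that the dual integral $\iint_{(E_R)^c}\Phi^{(n+2-\alpha)b/n}$ converges, i.e.\ that $(n+2-\alpha)b>n+2$ for the H\"older conjugate $b$, and this constrains $\sigma'$ from below by $\alpha r/(n+2-\beta r)$; verifying that a legal $\sigma'$ exists then uses the hypothesis $r<(n+2)\sigma/(\alpha+\beta\sigma)$ in a way that reproduces the paper's interpolation step under a different guise. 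In short, the fix is viable in principle but needs to be carried out, and once carried out it coincides with what the paper achieves more cleanly by choosing $s>r$ and interpolating on $f$. I would recommend adopting the paper's intermediate-exponent device; it handles $r=1$ and $q<1$ simultaneously with no extra cases.
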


For the proof of Theorem \ref{nonlinear}, we will need three auxiliary
results of independent interest. Namely, (i) a heat potential analog
of Hedberg's Riesz potential inequality; (ii) a heat ball analog of
the Hardy-Littlewood maximal function inequality; and (iii) a new
Sobolev inequality for heat potentials. These three results are stated
below in Theorems \ref{thm3.2}, \ref{thm3.3}, and \ref{thm3.4},
respectively.

To precisely state these results we first need
some definitions.

If $(x,t)\in\R^n \times\R$, $n\geq 1$ and $r\in[0,\infty]$ then the set
\begin{equation}\label{heatball}
 E_r (x,t):=
 \begin{cases}
  \{(y,s)\in\R^n \times\R:\Phi(x-y,t-s)>\frac{1}{r^n}\} & \text{if }0<r<\infty,\\
  \{(y,s)\in\R^n \times\R:s<t\} & \text{if }r=\infty,\\
  \emptyset & \text{if }r=0
 \end{cases}
\end{equation}
is called a {\it heat ball}.

Let $d$ be the metric on $\R^n \times\R$ defined by 
\begin{equation}\label{d-metric}
 d\left((x,t),(y,s)\right)=\max\left\{|x-y|,\sqrt{|t-s|}\right\}.  
\end{equation}
For $(x,t)\in\R^n \times\R$ and $r>0$ let
\begin{equation}\label{d-ball}
 Q_r (x,t)=\{(y,s)\in\R^n \times\R:d\left((x,t),(y,s)\right)<r\}
\end{equation}
be the open ball in the metric space $(\R^n \times\R,d)$ with center
$(x,t)$ and radius $r$, and let
\begin{equation}
 P_r (x,t)=\{(y,s)\in Q_r(x,t):s<t\}
\end{equation}
 be the lower half of $Q_r(x,t)$.

Under the change of variables
\begin{equation}\label{eq3.4}
 -r\eta=x-y,\quad -r^2 \zeta=t-s,\quad\text{ where }r>0,
\end{equation}
we have
\begin{equation}\label{eq3.5}
 \Phi(x-y,t-s)=r^{-n}\Phi(-\eta,-\zeta).
\end{equation}
Thus for $0\leq a<b\leq\infty$ and $\beta\in\R$ we find that
\begin{equation}\label{eq3.6}
 \iint_{E_{br}(x,t)\backslash E_{ar}(x,t)}\Phi(x-y,t-s)^\beta \,dy\,ds=r^{n+2-n\beta}\iint_{E_b (0,0)\backslash E_a (0,0)}\Phi(-\eta,-\zeta)^\beta \,d\eta\,d\zeta
\end{equation}
and using the fact that $$\int_{\R^n}\Phi(-\eta,-\zeta)^\beta
\,d\eta=\frac{C(\beta,n)}{(-\zeta)^{(\beta-1)n/2}}\quad 
\text{for }\zeta<0 \text{ and } \beta>0,$$ 
it is easy to check, for use in \eqref{eq3.6}, that
\begin{equation}\label{eq3.7}
 \iint_{E_\infty (0,0)\backslash E_1 (0,0)}\Phi(-\eta,-\zeta)^\beta \,d\eta\,d\zeta<\infty\quad(\text{resp. }\iint_{E_1 (0,0)}\Phi(-\eta,-\zeta)^\beta d\eta\,d\zeta<\infty)
\end{equation}
if $\beta>\frac{n+2}{n}\,(\text{resp. }\beta<\frac{n+2}{n})$.  Clearly
\begin{equation}\label{eq3.8}
 |Q_r (x,t)|=r^{n+2}|Q_1 (0,0)|\quad\text{and}\quad
|P_r (x,t)|=r^{n+2}|P_1 (0,0)|
\end{equation}
and taking $\beta=0=a$ and $b=1$ in \eqref{eq3.6} we get
\begin{equation}\label{eq3.9}
 |E_r (x,t)|=r^{n+2}|E_1 (0,0)|.
\end{equation}

\begin{lem}\label{lem3.1}
 There exists $r_0 =r_0 (n)>0$ such that 
\[
E_r (x,t)\subset P_{r_0 r}(x,t)\subset Q_{r_0 r}(x,t)\quad\text{for all } 
(x,t)\in\R^n \times\R \text{ and all }r>0.
\]
\end{lem}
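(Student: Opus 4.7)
The plan is to reduce to the normalized case $r=1$, $(x,t)=(0,0)$ by exploiting the parabolic scaling, and then verify directly that $E_1(0,0)$ is a bounded subset of the open lower half space. Setting $(y,s)=(x+r\eta,t+r^2\zeta)$ (i.e.\ the change of variables \eqref{eq3.4} with a sign flip), the identity $\Phi(x-y,t-s)=r^{-n}\Phi(-\eta,-\zeta)$ from \eqref{eq3.5} shows that $(y,s)\in E_r(x,t)$ iff $(\eta,\zeta)\in E_1(0,0)$. At the same time, $d((y,s),(x,t))=r\max\{|\eta|,\sqrt{|\zeta|}\}$ by the definition \eqref{d-metric} of the metric, and $s<t$ iff $\zeta<0$. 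Hence it suffices to exhibit $r_0=r_0(n)>0$ such that $E_1(0,0)\subset P_{r_0}(0,0)$; the outer inclusion $P_{r_0 r}(x,t)\subset Q_{r_0 r}(x,t)$ is built into the definition of $P_{r_0 r}(x,t)$.

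Next I analyze $E_1(0,0)$ by brute force. The defining inequality $\Phi(-\eta,-\zeta)>1$ forces $-\zeta>0$ (so $\zeta<0$); writing $\tau=-\zeta>0$, it becomes
\[
(4\pi\tau)^{-n/2}\,e^{-|\eta|^2/(4\tau)}>1.
\]
Dropping the exponential factor we see that $\tau<1/(4\pi)$, so $\sqrt{|\zeta|}<1/(2\sqrt{\pi})$. Taking logarithms and solving for $|\eta|^2$ gives
\[
|\eta|^2<2n\tau\log\!\left(\frac{1}{4\pi\tau}\right)\quad\text{for }0<\tau<\tfrac{1}{4\pi}.
\]
A one-variable calculus check shows the right-hand side attains its maximum on $(0,1/(4\pi))$ at $\tau=1/(4\pi e)$, with value $n/(2\pi e)$; hence $|\eta|<\sqrt{n/(2\pi e)}$.

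Combining the two bounds, $E_1(0,0)\subset P_{r_0}(0,0)$ with
\[
r_0:=\max\!\left\{\sqrt{\tfrac{n}{2\pi e}},\ \tfrac{1}{2\sqrt{\pi}}\right\},
\]
and rescaling via the first paragraph delivers the stated inclusions for every $(x,t)\in\R^n\times\R$ and every $r>0$. There is no genuine obstacle here: the entire argument is a one-line scaling reduction followed by an elementary optimization of $\tau\log(1/(4\pi\tau))$, the only modest point being to keep track that $\zeta<0$ is forced automatically, which is what upgrades the ambient ball $Q_{r_0 r}(x,t)$ to its lower half $P_{r_0 r}(x,t)$.
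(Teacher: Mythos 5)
Your proposal is correct and follows the same route as the paper: reduce by the parabolic scaling \eqref{eq3.4}--\eqref{eq3.5} to the normalized heat ball $E_1(0,0)$, observe that $E_1(0,0)$ lies in the open lower half-space and is bounded, and conclude. The paper simply asserts the existence of $r_0$ with $E_1(0,0)\subset Q_{r_0}(0,0)$ and leaves the boundedness implicit, whereas you make it quantitative by optimizing $\tau\mapsto 2n\tau\log(1/(4\pi\tau))$ to get the explicit value $r_0=\max\{\sqrt{n/(2\pi e)},\,1/(2\sqrt{\pi})\}$ --- a harmless and slightly more informative refinement.
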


\begin{proof}
 Choose $r_0 >0$ such that $E_1 (0,0)\subset Q_{r_0}(0,0)$.  Suppose $r>0 ,\,(x,t)\in\R^n \times\R$, and
 \begin{equation}\label{eq3.10}
  (y,s)\in E_r (x,t).
 \end{equation}
 Then $s<t$ and making the change of variables \eqref{eq3.4} we have
 \eqref{eq3.5} holds.  It follows therefore from the definition of
 $E_r (x,t)$ and \eqref{eq3.10} that
\[
(\eta,\zeta)\in E_1 (0,0)\subset Q_{r_0}(0,0).
\]  
Hence
$\max\{|\eta|,\sqrt{-\zeta}\}<r_0$.  So 
\[
d\left((x,t),(y,s)\right)=\max\{|x-y|,\sqrt{t-s}\}
=\max\{r|\eta|,r\sqrt{-\zeta}\}<r_0r.
\] 
Thus $(y,s)\in Q_{r_0 r}(x,t)$. Hence, since $s<t$, 
we have $(y,s)\in P_{r_0r}(x,t)$.
\end{proof}

\begin{lem}\label{lem3.2}
  Suppose $a>-1$, $b\ge 0$, and $\alpha>0$ are constants and
  $g:\R^{m}\to\R,\,m\geq 1$, is a nonnegative measurable function.
  Then
 \begin{equation}\label{eq3.11}
  \frac{\alpha^{a+1}}{a+1}\int_{\R^m}g(x)^{a+b+1}dx=\int^{\infty}_{0}
\lambda^a \left(\int_{\{g>\lambda/\alpha\}}g(x)^b dx\right)d\lambda.
 \end{equation}
\end{lem}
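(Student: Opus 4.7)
The plan is to prove the identity by applying the Fubini--Tonelli theorem to rewrite the right-hand side as an iterated integral in the opposite order. Since everything in sight is nonnegative, there are no integrability obstructions to the interchange.

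First I would rewrite the indicator as
\[
\int_{\{g>\lambda/\alpha\}} g(x)^b \, dx = \int_{\R^m} \mathbf{1}_{\{\lambda < \alpha g(x)\}} \, g(x)^b \, dx,
\]
so the right-hand side of \eqref{eq3.11} becomes
\[
\int_0^\infty \int_{\R^m} \lambda^a \, \mathbf{1}_{\{0<\lambda<\alpha g(x)\}} \, g(x)^b \, dx \, d\lambda.
\]
The integrand is nonnegative and measurable on $(0,\infty)\times\R^m$, so Tonelli's theorem permits exchanging the order of integration.

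Next I would evaluate the inner $\lambda$-integral. After the swap, for each fixed $x$ with $g(x)>0$ we obtain
\[
g(x)^b \int_0^{\alpha g(x)} \lambda^a \, d\lambda = g(x)^b \cdot \frac{(\alpha g(x))^{a+1}}{a+1} = \frac{\alpha^{a+1}}{a+1}\, g(x)^{a+b+1},
\]
where the hypothesis $a>-1$ is exactly what is needed to make $\int_0^{\alpha g(x)}\lambda^a\,d\lambda$ finite and equal to $(\alpha g(x))^{a+1}/(a+1)$. On the set $\{g=0\}$ the upper limit is $0$ and the contribution vanishes (and is consistent with $g^{a+b+1}=0$ there, using $b\ge 0$ and $a+b+1>0$). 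Integrating the result over $\R^m$ yields the left-hand side of \eqref{eq3.11}.

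There is really no serious obstacle; the only point requiring care is the use of $a>-1$ to guarantee convergence of the inner integral at $\lambda=0$, together with the observation that Tonelli's theorem applies without further hypotheses since the integrand is nonnegative (so both sides being $+\infty$ is a permissible, and automatically consistent, case).
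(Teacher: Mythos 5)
Your proof is correct and takes essentially the same approach as the paper's: both write the power of $g$ as an integral of $\lambda^a$ over $(0,\alpha g(x))$, insert an indicator function, and invoke Tonelli to swap the order of integration. The only difference is cosmetic — the paper starts from the left-hand side and you start from the right.
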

\begin{proof}
 \begin{align*}
  \text{L.H.S. of \eqref{eq3.11} }&=\int_{\R^m}\frac{(\alpha g(x))^{a+1}}{a+1}g(x)^b dx\\
  &=\int_{\R^m}\left(\int^{\alpha g(x)}_{0}\lambda^a d\lambda\right)g(x)^b dx\\
  &=\int_{\R^m}\left(\int^{\infty}_{0}(\chi_{[0,\alpha g(x)]}(\lambda))\lambda^a d\lambda\right)g(x)^b dx\\
  &=\int^{\infty}_{0}\lambda^a \left(\int_{\R^m}\chi_{[0,\alpha g(x)]}(\lambda)g(x)^b dx\right)d\lambda\\
  &=\text{R.H.S. of \eqref{eq3.11}.}
 \end{align*}
\end{proof}

The following theorem is the heat potential analog of Hedberg's Riesz
potential inequality \cite{H1972}.

\begin{thm}\label{thm3.2}
 Suppose $0<\alpha<n+2$ and $1\leq p<\frac{n+2}{\alpha}$ are constants and $f:\R^n \times\R\to\R$ is a nonnegative measurable function.  Then
 \begin{equation}\label{eq3.12}
  J_\alpha *f(x,t)\leq C\| f\|^{\frac{\alpha p}{n+2}}_{L^p(\R^n\times\R)}(Mf(x,t))^{1-\frac{\alpha p}{n+2}}\quad\text{for }(x,t)\in\R^n \times\R
 \end{equation}
 where $C=C(n,\alpha,p)$ is a positive constant and 
 \begin{equation}\label{eq3.13}
  Mf(x,t)=\sup_{r>0}\frac{1}{|E_r (x,t)|}\iint_{E_r (x,t)}f(y,s)\,dy\,ds
 \end{equation}
 is the heat ball analog of the Hardy-Littlewood maximal function.
\end{thm}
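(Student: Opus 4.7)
My plan is to mimic the classical proof of Hedberg's inequality, with Euclidean balls replaced by heat balls $E_r(x,t)$, exploiting the fact that $\Phi$ is (essentially) constant on the level sets that define these balls. Fix $(x,t)\in\R^n\times\R$ and a parameter $\delta>0$ to be chosen, and split the convolution \eqref{eq3.2} into a near piece over $E_\delta(x,t)$ and a far piece over its complement.

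For the near piece, I would dyadically decompose $E_\delta(x,t)=\bigcup_{k\ge 0}\bigl(E_{2^{-k}\delta}(x,t)\setminus E_{2^{-k-1}\delta}(x,t)\bigr)$. On each annulus the defining inequality of \eqref{heatball} gives $\Phi(x-y,t-s)\le (2^{-k-1}\delta)^{-n}$, so $\Phi^{(n+2-\alpha)/n}\le C(2^{-k}\delta)^{-(n+2-\alpha)}$. Combined with the scaling $|E_r|=r^{n+2}|E_1(0,0)|$ from \eqref{eq3.9} and the definition \eqref{eq3.13} of $Mf$, the $k$-th term is dominated by $C(2^{-k}\delta)^{\alpha}\,Mf(x,t)$. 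Since $\alpha>0$ the geometric series converges and the near piece is at most $C\delta^\alpha Mf(x,t)$.

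For the far piece I would apply Hölder's inequality with exponents $p$ and $p'$:
\[
\iint_{(\R^n\times\R)\setminus E_\delta(x,t)}\Phi(x-y,t-s)^{(n+2-\alpha)/n}f(y,s)\,dy\,ds
\le \|f\|_{L^p}\Bigl(\iint_{(\R^n\times\R)\setminus E_\delta(x,t)}\Phi^{(n+2-\alpha)p'/n}\Bigr)^{1/p'}.
\]
Because $\Phi$ vanishes for $s\ge t$, the effective domain is $E_\infty(x,t)\setminus E_\delta(x,t)$, and the scaling identity \eqref{eq3.6} reduces the inner integral to $\delta^{\,n+2-(n+2-\alpha)p'}$ times a universal constant. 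The finiteness of that constant is exactly the first statement of \eqref{eq3.7}, which requires $(n+2-\alpha)p'/n>(n+2)/n$; this is equivalent to $p<(n+2)/\alpha$, our hypothesis. Raising to the $1/p'$, the far piece is bounded by $C\delta^{\alpha-(n+2)/p}\|f\|_{L^p}$.

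Adding the two bounds gives $J_\alpha\ast f(x,t)\le C\delta^\alpha Mf(x,t)+C\delta^{\alpha-(n+2)/p}\|f\|_{L^p}$, and the final step is to optimize in $\delta$: equating the two terms forces $\delta=(\|f\|_{L^p}/Mf(x,t))^{p/(n+2)}$, which plugged back in produces exactly \eqref{eq3.12}. One must of course handle the degenerate cases $Mf(x,t)=0$ and $Mf(x,t)=\infty$ separately, but these are trivial. The only step that requires any care is checking that the exponent in \eqref{eq3.7} comes out in the allowed range, and that subtlety is precisely what fixes the endpoint $p<(n+2)/\alpha$; everything else is bookkeeping in the heat-ball scaling.
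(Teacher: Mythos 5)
Your proposal is correct and follows essentially the same Hedberg-style strategy as the paper: split $J_\alpha\ast f$ into a near piece over a heat ball and a far piece over its complement, bound the near piece by $C\delta^\alpha Mf(x,t)$ via the scaling \eqref{eq3.9}, bound the far piece by $C\delta^{\alpha-(n+2)/p}\|f\|_{L^p}$ via H\"older and \eqref{eq3.6}--\eqref{eq3.7}, then optimize over $\delta$. The only cosmetic difference is in the near-piece estimate, where you use a dyadic annulus decomposition while the paper integrates $\int_0^\rho r^{\alpha-n-3}\bigl(\iint_{E_r}f\bigr)\,dr$ and applies Fubini; both are standard and give the same $\rho^\alpha Mf$ bound.
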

\begin{proof}
 Let $\rho>0$.  Then
 \begin{align*}
  \int^{\rho}_{0}&\frac{1}{r^{n+3-\alpha}}\left(\iint_{E_r (x,t)}f(y,s)\,dy\,ds\right)dr\\
  &=\int^{\rho}_{0}\left(\iint_{\R^n \times\R}\frac{1}{r^{n+3-\alpha}}\chi_{E_r (x,t)}(y,s)f(y,s)\,dy\,ds\right)dr\\
  &=\iint_{\R^n \times\R}\left(\int^{\rho}_{0}\frac{1}{r^{n+3-\alpha}}\chi_{E_r (x,t)}(y,s)\,dr\right)f(y,s)\,dy\,ds\\
  &=\iint_{E_\rho (x,t)}\left(\int^{\rho}_{0}\frac{1}{r^{n+3-\alpha}}\chi_{E_r (x,t)}(y,s)\,dr\right)f(y,s)\,dy\,ds\\
  &=\iint_{E_\rho (x,t)} \left(\int^{\rho}_{\frac{1}{\Phi(x-y,t-s)^{1/n}}}\frac{dr}{r^{n+3-\alpha}}\right)f(y,s)\,dy\,ds\\
  &=\frac{1}{n+2-\alpha}\iint_{E_\rho (x,t)}\left(\Phi(x-y,t-s)^{\frac{n+2-\alpha}{n}}-\frac{1}{\rho^{n+2-\alpha}}\right)f(y,s)\,dy\,ds.
 \end{align*}
 It follows therefore from \eqref{eq3.9} and \eqref{eq3.13} that 
 \begin{equation}\label{eq3.14}
  \iint_{E_\rho (x,t)}\Phi(x-y,t-s)^{\frac{n+2-\alpha}{n}}f(y,s)\,dy\,ds\leq C\rho^\alpha Mf(x,t)\quad\text{for }(x,t)\in\R^n \times\R
 \end{equation}
 where $C=C(n,\alpha)$ is a positive constant.
 
 Let $q$ be the conjugate H\"older exponent of $p$.  Then 
\[
\frac{1}{q}=1-\frac{1}{p}<1-\frac{\alpha}{n+2}=\frac{n+2-\alpha}{n+2}
\] and thus $\frac{n+2-\alpha}{n}q>\frac{n+2}{n}$.  Hence by \eqref{eq3.6},
\eqref{eq3.7}, \eqref{eq3.9}, and H\"older's inequality we get
 \begin{align}\label{eq3.15}
  \notag \iint_{(\R^n \times\R)\backslash E_\rho (x,t)}&\Phi(x-y,t-s)^{\frac{n+2-\alpha}{n}}f(y,s)\,dy\,ds\\
  \notag &\leq\| f\|_{L^p (\R^n \times\R)}\left(\iint_{E_\infty (x,t)\backslash E_\rho (x,t)}\Phi(x-y,t-s)^{\frac{n+2-\alpha}{n}q}\,dy\,ds\right)^{1/q}\\
  &\leq C\left(\frac{1}{\rho}\right)^{\frac{n+2}{p}-\alpha}\| f\|_{L^p (\R^n \times\R)}
 \end{align}
 where $C=C(n,\alpha,p)$ is a positive constant.
 
 Taking 
\[\rho=\left(\frac{\| f\|_p}{Mf(x,t)}\right)^{p/(n+2)}\] 
and adding \eqref{eq3.14} and \eqref{eq3.15} yields \eqref{eq3.12}.
\end{proof}

The following theorem is is the heat ball analog of the strong Hardy-Littlewood inequality for the maximal function \eqref{eq3.13}.

\begin{thm}\label{thm3.3}
 Let $f\in L^p (\R^n \times\R)$ be a nonnegative function where $p\in(1,\infty]$ and $n\geq 1$.  Then
 \begin{equation}\label{eq3.16}
  \| Mf\|_{L^p (\R^n \times\R)}\leq C\| f\|_{L^p (\R^n \times\R)}
 \end{equation}
 where $C=C(n,p)$ is a positive constant and $Mf$ is given by \eqref{eq3.13}.
\end{thm}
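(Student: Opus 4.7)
The plan is to reduce Theorem \ref{thm3.3} to the classical strong-type maximal inequality in a space of homogeneous type, using Lemma \ref{lem3.1} to replace the heat balls $E_r$ by the metric balls $Q_r$ associated with the metric $d$ of \eqref{d-metric}. Define the auxiliary maximal operator
\[
\widetilde{M}f(x,t) := \sup_{R>0}\frac{1}{|Q_R(x,t)|}\iint_{Q_R(x,t)} f(y,s)\,dy\,ds;
\]
the first step is the pointwise bound $Mf(x,t)\le C(n)\widetilde{M}f(x,t)$.

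This bound is immediate from the ingredients already assembled. By Lemma \ref{lem3.1}, $E_r(x,t)\subset Q_{r_0 r}(x,t)$, and by \eqref{eq3.8}--\eqref{eq3.9} both $|E_r(x,t)|$ and $|Q_{r_0 r}(x,t)|$ are fixed constants (depending only on $n$) times $r^{n+2}$. Hence
\[
\frac{1}{|E_r(x,t)|}\iint_{E_r(x,t)} f(y,s)\,dy\,ds \le \frac{|Q_{r_0 r}(x,t)|}{|E_r(x,t)|}\cdot\frac{1}{|Q_{r_0 r}(x,t)|}\iint_{Q_{r_0 r}(x,t)} f(y,s)\,dy\,ds,
\]
and taking the supremum over $r>0$ yields $Mf(x,t)\le C(n)\widetilde{M}f(x,t)$.

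Next I would establish $\|\widetilde{M}f\|_{L^p(\R^n\times\R)}\le C\|f\|_{L^p(\R^n\times\R)}$ for $1<p\le\infty$ by the classical Hardy--Littlewood recipe. Since $Q_R(x,t)=B_R(x)\times(t-R^2,t+R^2)$ with $B_R(x)$ the Euclidean ball in $\R^n$, the identity $|Q_{kR}(x,t)|=k^{n+2}|Q_R(x,t)|$ shows that $(\R^n\times\R,d,dx\,dt)$ is a space of homogeneous type with doubling dimension $n+2$. The $p=\infty$ case is trivial from $\widetilde{M}f\le\|f\|_{L^\infty}$. For the weak-$(1,1)$ bound $|\{\widetilde{M}f>\lambda\}|\le(C/\lambda)\|f\|_{L^1}$, I would use the standard Vitali-type argument: cover each point of the level set by a $d$-ball $Q_{R_x}(x,t)$ on which the average of $f$ exceeds $\lambda$ (so $|Q_{R_x}(x,t)|<\|f\|_{L^1}/\lambda$), extract a disjoint subfamily whose fivefold enlargements still cover the level set, and pay for the enlargement with the factor $5^{n+2}$ provided by doubling. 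Marcinkiewicz interpolation between this weak-$(1,1)$ bound and the trivial $L^\infty$ bound then yields the strong $(p,p)$ estimate for $1<p<\infty$; combined with Step~1, this proves \eqref{eq3.16}.

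No step in this outline presents a serious obstacle: once Lemma \ref{lem3.1} and the volume identities \eqref{eq3.8}--\eqref{eq3.9} reduce the problem to the metric balls $Q_R$, the remainder is a direct transcription of the classical Hardy--Littlewood maximal theorem to the homogeneous space $(\R^n\times\R,d)$, and the Vitali covering and Marcinkiewicz interpolation arguments go through verbatim.
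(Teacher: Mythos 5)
Your proposal is correct and follows essentially the same route as the paper: reduce the heat-ball maximal function to the metric-ball maximal function via Lemma \ref{lem3.1} and the volume identities, prove a weak-$(1,1)$ bound by a Vitali covering argument, and interpolate with the trivial $L^\infty$ bound. The only cosmetic difference is that you invoke Marcinkiewicz interpolation as a black box, whereas the paper carries out that interpolation explicitly via the cutoff $h_\lambda=f\chi_{\{f>\lambda/2\}}$ together with the layer-cake identity of Lemma \ref{lem3.2}.
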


\begin{proof}
The theorem is trivially true if $p=\infty$.  Hence we can assume
  $1<p<\infty$.  Let $r_0$ be as in Lemma \ref{lem3.1}.  By
  \eqref{eq3.8} and \eqref{eq3.9} we have 
\[
\frac{|E_r (x,t)|}{|Q_{r_0
      r}(x,t)|}=\frac{r^{n+2}|E_1 (0,0)|}{(r_0 r)^{n+2}|Q_1
    (0,0)|}=C(n).
\]  
Thus by Lemma \ref{lem3.1},
 \begin{align*}
  Mf(x,t)&=\sup_{r>0}\frac{1}{|E_r (x,t)|}\iint_{E_r (x,t)}f(y,s)\,dy\,ds\\
  &\leq\sup_{r>0}\frac{1}{C(n)|Q_{r_0 r}(x,t)|}\iint_{Q_{r_0 r}(x,t)}f(y,s)\,dy\,ds\\
  &=\sup_{r>0}\frac{1}{C(n)|Q_r (x,t)|}\iint_{Q_r (x,t)}f(y,s)\,dy\,ds\\
  &=\frac{1}{C(n)}\widehat{M}f(x,t)
 \end{align*}
 where 
 \begin{equation}\label{eq3.17}
  \widehat{M}f(x,t)=\sup_{r>0}\frac{1}{|Q_r (x,t)|}\iint_{Q_r (x,t)}f(y,s)\,dy\,ds.
 \end{equation}
 Hence to complete the proof, it suffices to prove \eqref{eq3.16} with
 $Mf$ replaced with $\widehat{M}f$.  To do that we need the following
 $d$-ball analog of the weak Hardy-Littlewood inequality for the
 maximal function \eqref{eq3.17}.  By a $d$-ball we mean a ball in the
 metric space $(\R^n \times\R, d)$.

\begin{pro}\label{pro3.1}
 Let $g\in L^1 (\R^n \times\R)$ be a nonnegative function where $n\geq 1$.  Then 
 $$|\{\widehat{M}g>\lambda\}|
<\frac{5^{n+2}}{\lambda}\| g\|_{L^1 (\R^n \times\R)}\quad
\text {for all }\lambda>0.$$
\end{pro}

\begin{proof}
 Let $\lambda>0$ be fixed.  For each
 $(x,t)\in\{\widehat{M}g>\lambda\}$ there exists, by the definition of
 $\widehat{M}g$, $r(x,t)>0$ such that 
 $$\iint_{Q_{r(x,t)}(x,t)}g(y,s)\,dy\,ds>\lambda|Q_{r(x,t)}(x,t)|.$$
 Since $g\in L^1 (\R^n \times\R)$, the radii $r(x,t)$ of the balls 
$Q_{r(x,t)}(x,t)$, $(x,t)\in\{\widehat{M}g>\lambda\}$, are bounded.  
Thus by the Vitali covering lemma we can find among these balls a sequence
 $\{Q_j\}$ of pairwise disjoint balls such that 
 $$\bigcup^{\infty}_{j=1}5Q_j \supset\{\widehat{M}g>\lambda\}.$$
 Hence
 $$|\{\widehat{M}g>\lambda\}|\leq\sum^{\infty}_{j=1}|5Q_j |=\sum^{\infty}_{j=1}5^{n+2}|Q_j |<\frac{5^{n+2}}{\lambda}\| g\|_{L^1}.$$
\end{proof}

 Returning to the proof of Theorem \ref{thm3.3}, for each $\lambda>0$, we define $h_\lambda (x,t)=f(x,t)$ if $f(x,t)>\lambda/2$ and $0$ otherwise.  Since 
 $f\in L^p (\R^n \times\R),\,h_\lambda \in L^1 (\R^n \times\R)$.  Also,
 it is easy to check that 
\[
\{\widehat{M}f>\lambda\}\subset\{\widehat{M}h_\lambda >\lambda/2\}.
\]  
Thus, by Proposition \ref{pro3.1},
\begin{align}\label{eq3.18}
 \notag |\{\widehat{M}f>\lambda\}|&\leq|\{\widehat{M}h_\lambda >\lambda/2\}|\\
 \notag &\leq\frac{5^{n+2}}{\lambda/2}\| h_\lambda \|_{L^1 (\R^n \times\R)}\\
 &=\frac{2(5^{n+2})}{\lambda}\iint_{\{f>\lambda/2\}}f(y,s)\,dy\,ds.
\end{align}
Hence, by Lemma \ref{lem3.2} with $b=0,a=p-1,\alpha=1,m=n+1$, and $g=\widehat{M}f$ we have
\begin{align*}
 \|\widehat{M}f\|^{p}_p&=p\int^{\infty}_{0}
\lambda^{p-1}|\{\widehat{M}f>\lambda\}|\,d\lambda\\
 &\leq 2p5^{n+2}\int^{\infty}_{0}\lambda^{p-2}
\left(\iint_{\{f>\lambda/2\}}f(y,s)\,dy\,ds\right)d\lambda\\
 &=C(n,p)\| f\|^{p}_p
\end{align*}
where the last equation follows from Lemma \ref{lem3.2} with $b=1$,
$a=p-2$, $\alpha=2$, $m=n+1$, and $g=f$.
\end{proof}

The Sobolev inequality for heat potentials is given in the following theorem.

\begin{thm}\label{thm3.4}
 Suppose $0<\alpha<n+2$ and $1<p<\frac{n+2}{\alpha}$ are constants and
 $f:\R^n \times\R\to\R$ is a nonnegative measurable function.  Let 
$$q=\frac{(n+2)p}{n+2-\alpha p}.$$ 
 Then
 $$\| J_\alpha *f\|_{L^q (\R^n \times\R)}\leq C\| f\|_{L^p (\R^n
   \times\R)}$$ 
where $C=C(n,p,\alpha)$ is a positive constant.
\end{thm}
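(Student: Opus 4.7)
The plan is to deduce the Sobolev inequality from Hedberg's inequality (Theorem \ref{thm3.2}) combined with the heat-ball maximal function inequality (Theorem \ref{thm3.3}), following the classical pattern for Riesz potentials.

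First I would apply Theorem \ref{thm3.2} pointwise to obtain, for every $(x,t)\in\R^n\times\R$,
\[
(J_\alpha * f)(x,t) \leq C\,\|f\|_{L^p(\R^n\times\R)}^{\alpha p/(n+2)}\,(Mf(x,t))^{1-\alpha p/(n+2)}.
\]
Raising both sides to the $q$-th power gives
\[
(J_\alpha * f)(x,t)^q \leq C^q\,\|f\|_{L^p}^{q\alpha p/(n+2)}\,(Mf(x,t))^{q(1-\alpha p/(n+2))}.
\]

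Next I would verify the crucial arithmetic: since $q=\frac{(n+2)p}{n+2-\alpha p}$, we have
\[
q\Bigl(1-\tfrac{\alpha p}{n+2}\Bigr) = q\cdot\tfrac{n+2-\alpha p}{n+2} = p,
\]
so the exponent of $Mf$ is exactly $p$. Integrating the displayed pointwise bound over $\R^n\times\R$ and using Theorem \ref{thm3.3} (which applies since $p>1$) then gives
\[
\|J_\alpha * f\|_{L^q}^{\,q} \leq C\,\|f\|_{L^p}^{q\alpha p/(n+2)}\,\|Mf\|_{L^p}^{p} \leq C\,\|f\|_{L^p}^{q\alpha p/(n+2)+p}.
\]
A short computation confirms $q\alpha p/(n+2)+p = q$, so taking $q$-th roots yields $\|J_\alpha * f\|_{L^q}\leq C\|f\|_{L^p}$.

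There is no real obstacle here: both ingredients have already been established, and the argument is purely the standard Hedberg interpolation trick. The only thing to check carefully is the arithmetic relating $q$, $p$, and $\alpha$, and that the hypothesis $p>1$ (as opposed to $p\geq 1$ in Theorem \ref{thm3.2}) is indeed needed so that Theorem \ref{thm3.3} is applicable; the endpoint $p=1$ would be expected to fail exactly as in the Riesz potential setting.
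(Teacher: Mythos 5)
Your proposal is correct and takes essentially the same route as the paper: apply Hedberg's inequality (Theorem~\ref{thm3.2}) pointwise, observe that $q\bigl(1-\tfrac{\alpha p}{n+2}\bigr)=p$ so the maximal-function factor lands in $L^p$, and then invoke the strong maximal inequality (Theorem~\ref{thm3.3}). The paper's version is the same argument written more compactly via the identity $\|(Mf)^{1-\alpha p/(n+2)}\|_q=\|Mf\|_p^{(n+2-\alpha p)/(n+2)}$.
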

\begin{proof}
 By Theorems \ref{thm3.2} and \ref{thm3.3} we have 
 \begin{align*}
  \| J_\alpha *f\|_q &\leq C\| f\|^{\frac{\alpha p}{n+2}}_{p}\|(Mf)^{1-\frac{\alpha p}{n+2}}\|_q \\
  &=C\| f\|^{\frac{\alpha p}{n+2}}_{p}\| Mf\|^{\frac{n+2-\alpha p}{n+2}}_{p}\\
  &\leq C\| f\|_p.
 \end{align*}
\end{proof}

We are now able to prove Theorem \ref{nonlinear}.

\begin{proof}[Proof of Theorem \ref{nonlinear}] 
Let $g=(J_\beta\ast f)^\sigma$. By Theorem
  \ref{thm3.2}, we have
\begin{equation}\label{nonlin1}
\|J_\alpha \ast g\|_\infty\leq C \|g\|_p^{\frac{\alpha p}{n+2}}
\|g\|_\infty^{1-\frac{\alpha p}{n+2}}\quad\mbox{ for } 1\leq p<\frac{n+2}{\alpha}
\end{equation}
and
\begin{equation}\label{newone}
\|J_\beta \ast f\|_\infty\leq C \|f\|_r^{\frac{\beta r}{n+2}}
\|f\|_\infty^{1-\frac{\beta r}{n+2}}
\end{equation}
because 
\[
1\le r<\frac{(n+2)\sigma}{\alpha+\beta\sigma}=\frac{n+2}{\beta}
\frac{\beta\sigma}{\alpha+\beta\sigma}<\frac{n+2}{\beta}.
\]
Estimate \eqref{newone} implies
\begin{equation}\label{nonlin3}
\|g\|_\infty= \| J_\beta \ast f\|_\infty^\sigma 
\leq C \|f\|_r^{\frac{\sigma \beta r}{n+2}}
\|f\|_\infty^{\sigma-\frac{\sigma\beta r}{n+2}}.
\end{equation}
It follows from \eqref{assumptions} that there exist $s\in(r,(n+2)/\beta)$ and
$p\in(1,(n+2)/\alpha)$ such that
\begin{equation}\label{nonlin2}
p\sigma=\frac{(n+2)s}{n+2-\beta s}.
\end{equation}
By Theorem \ref{thm3.4} we have
\begin{equation}\label{nonlin4}
\|g\|_p=\|J_\beta \ast f\|^\sigma_{p\sigma} \leq C \|f\|_s^\sigma.
\end{equation}
We now use \eqref{nonlin4} and \eqref{nonlin3} in \eqref{nonlin1} to obtain
\begin{equation}\label{nonlin5}
\| J_\alpha\ast((J_\beta\ast f)^\sigma)\|_\infty=\|J_\alpha \ast g\|_\infty
\leq C \|f\|_s^{\frac{\alpha p\sigma}{n+2}} \|f\|_r^{\frac{\sigma
    \beta r}{n+2}(1-\frac{\alpha p}{n+2})}
\|f\|_\infty^{(\sigma-\frac{\sigma\beta r}{n+2})(1-\frac{\alpha p}{n+2})}.
\end{equation}
Finally, using the estimate
$$
\|f\|_s\leq \|f\|_r^{\frac{r}{s}}\|f\|_\infty^{\frac{s-r}{s}}
$$
in \eqref{nonlin5} gives
$$
\begin{aligned}
\|J_\alpha\ast((J_\beta\ast f)^\sigma)\|_\infty&\leq C
\|f\|_r^{\frac{\alpha p\sigma r}{(n+2)s}+ \frac{\sigma
    \beta r}{n+2}(1-\frac{\alpha p}{n+2})}
\|f\|_\infty^{\frac{\alpha p\sigma}{n+2}\frac{s-r}{s}+(\sigma-\frac{\sigma\beta r}{n+2})(1-\frac{\alpha p}{n+2})}\\
&= C \|f\|_r^{\frac{(\alpha+\beta\sigma)r}{n+2}} 
\|f\|_\infty^{\frac{\sigma(n+2-\beta r)-\alpha r}{n+2}}
\end{aligned}
$$
by \eqref{nonlin2}.
\end{proof}

Let $\Omega=\R^n\times (a,b)$ where $n\ge 1$ and $a<b$. The following
theorem gives estimates for the heat potential
\[
(V_\alpha f)(x,t)=\iint_{\Omega}
  \Phi(x-y,t-s)^{\frac{n+2-\alpha}{n}}f(y,s)\,dy\,ds, 
\]
where $\Phi$ is given by \eqref{eq1.5} and $\alpha\in(0,n+2)$.

\begin{thm}\label{HeatPotential}
Let $p,q\in[1,\infty]$,  $\alpha$, and $\delta$ satisfy 
\begin{equation}\label{heatA1}
0\le \delta =\frac{1}{p}-\frac{1}{q}<\frac{\alpha}{n+2}<1.
\end{equation}
Then $V_\alpha$ maps $L^p(\Omega)$ continuously into 
$L^q(\Omega)$ and for $f\in L^p(\Omega)$ we have
\begin{equation}\label{heatA2}
\|V_\alpha  f\|_{L^q(\Omega)} \le M\|f\|_{L^p(\Omega)},
\end{equation}
where 
\begin{equation}\label{heatA3}
M=C(b-a)^{(\alpha-(n+2)\delta)/2}\quad\text{for some constant }
C=C(n,\alpha,\delta)>0. 
\end{equation}
\end{thm}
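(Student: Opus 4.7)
My plan is to view the operator $V_\alpha$ as a convolution on $\R^n\times\R$ and then invoke Young's convolution inequality together with an explicit $L^\rho$ estimate on the kernel restricted to a slab of thickness $b-a$.

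First I would extend $f\in L^p(\Omega)$ by zero to all of $\R^n\times\R$ and set
\[
K(x,t)=\Phi(x,t)^{(n+2-\alpha)/n},
\]
which, by \eqref{eq1.5}, is supported in $\{t>0\}$. Then $V_\alpha f=K\ast f$, and whenever $(x,t)\in\Omega$ the only relevant values of $K$ come from arguments $(x-y,t-s)$ with $s,t\in(a,b)$ and $s<t$, so $t-s\in(0,b-a)$. Hence it suffices to use $\widetilde K := K\chi_{\R^n\times(0,b-a)}$. Choose $\rho\in[1,\infty]$ by the Young relation $1+\tfrac1q=\tfrac1p+\tfrac1\rho$, so $\tfrac1\rho=1-\delta$; Young's inequality gives
\[
\|V_\alpha f\|_{L^q(\Omega)}\le \|\widetilde K\|_{L^\rho(\R^n\times\R)}\,\|f\|_{L^p(\Omega)}.
\]

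Next I would compute $\|\widetilde K\|_{L^\rho}$ explicitly. Using the standard Gaussian identity $\int_{\R^n}\Phi(x,t)^\beta\,dx = C(\beta,n)\,t^{-(\beta-1)n/2}$ for $\beta>0$, $t>0$, applied with $\beta=\rho(n+2-\alpha)/n$, Fubini yields
\[
\|\widetilde K\|_{L^\rho}^\rho = C\int_0^{b-a}t^{-(\rho(n+2-\alpha)-n)/2}\,dt.
\]
The condition $\delta<\alpha/(n+2)$ is exactly equivalent to $\rho<(n+2)/(n+2-\alpha)$, which in turn is precisely the condition for the $t$-integral to converge at $0$. Carrying out the integration,
\[
\|\widetilde K\|_{L^\rho}^\rho = C(b-a)^{(n+2-\rho(n+2-\alpha))/2},
\]
and dividing the exponent by $\rho$ using $1/\rho=1-\delta$ gives
\[
\|\widetilde K\|_{L^\rho}=C(b-a)^{(n+2)(1-\delta)/(2)-(n+2-\alpha)/2}=C(b-a)^{(\alpha-(n+2)\delta)/2},
\]
which is exactly the constant $M$ in \eqref{heatA3}. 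Combining with the Young estimate gives \eqref{heatA2}.

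Two minor caveats to handle. First, the endpoint $\rho=1$ (i.e.\ $\delta=0$, $p=q$) and the cases $p=\infty$ or $q=\infty$ are permitted by Young's inequality, and the kernel computation remains valid since the integrability criterion $\delta<\alpha/(n+2)$ still holds strictly. Second, one should check that the reduction to $\widetilde K$ is legitimate: since $K(x-y,t-s)$ vanishes for $s\ge t$ and since $s\in(a,b)$ whenever $f(y,s)\ne0$, for $(x,t)\in\Omega$ only $t-s\in(0,b-a)$ contributes, so replacing $K$ by $\widetilde K$ changes nothing in $(V_\alpha f)(x,t)$ on $\Omega$. I do not anticipate a serious obstacle; the only delicate step is ensuring the exponent bookkeeping between the Young relation and the kernel integral matches the claimed power of $b-a$, which is what the computation above verifies.
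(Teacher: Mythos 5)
Your argument is correct and in substance matches the paper's proof: both reduce the estimate to bounding the kernel $h=\Phi^{(n+2-\alpha)/n}$ in $L^{r}$ with $r=1/(1-\delta)$ over a slab of time-thickness $b-a$ (using the Gaussian identity $\int_{\R^n}\Phi^{\beta}\,dx=C\,t^{-(\beta-1)n/2}$), and the exponent bookkeeping giving $M=C(b-a)^{(\alpha-(n+2)\delta)/2}$ is identical. The only real difference is presentational: you invoke Young's convolution inequality directly after truncating the kernel, whereas the paper, following Gilbarg--Trudinger's Lemma 7.12, re-derives the needed case of Young's inequality from scratch via the three-factor H\"older splitting $h(X-Y)|f(Y)|=h(X-Y)^{r/q}|f(Y)|^{p/q}\cdot h(X-Y)^{r(1-1/p)}\cdot|f(Y)|^{p\delta}$ together with the two kernel bounds \eqref{heatA4}--\eqref{heatA5}.
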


Theorem \ref{HeatPotential} is weaker than Theorem \ref{thm3.4} in
that the second inequality in \eqref{heatA1} cannot be replaced with
equality. However it is stronger in that the cases $p=1$ and
$q=\infty$ are allowed. These cases will be needed in Section
\ref{sec5} to prove Theorem \ref{thm2.2}.

\begin{proof} This proof is a modification of the proof of Lemma 7.12
  in \cite{GT} dealing with Riesz potentials.  Let
  $\beta=\frac{n+2}{n}(1-\frac{\alpha}{n+2})$ and
  $r=\frac{1}{1-\delta}$. Then by \eqref{heatA1}
\[
1-\frac{n(\beta r-1)}{2}=\frac{n+2}{2}\frac{\frac{\alpha}{n+2}-\delta}{1-\delta}>0
\]
and for $s<t$ we have
\begin{align*}
\int_{\R^n}\Phi(x-y,t-s)^{\beta r}\,dy
&=\int_{\R^n}\Phi(x-y,t-s)^{\beta r}\,dx\\
&=\frac{1}{(4\pi)^{(\beta r-1)n/2}(\beta r)^{n/2}(t-s)^{(\beta r-1)n/2}}.
\end{align*}
Hence, letting $X=(x,t)$, $Y=(y,s)$, and $h=\Phi^\beta$ we have
\begin{equation}\label{heatA4}
\|h(X-\cdot)\|_{L^r(\Omega)}\le M \qquad \text{for all } X\in\Omega
\end{equation}
and
\begin{equation}\label{heatA5}
\|h(\cdot-Y)\|_{L^r(\Omega)}\le M \qquad \text{for all } Y\in\Omega
\end{equation}
where $M$ is given by \eqref{heatA3}.

Since
\[
\frac{r}{q}+r\left(1-\frac{1}{p}\right)
=r\left(1+\frac{1}{q}-\frac{1}{p}\right)=r(1-\delta)=1
\]
and
\[
\frac{p}{q}+p\delta=p\left(\frac{1}{q}+\delta\right)=1
\]
we have
\[
h(X-Y)|f(Y)|=h(X-Y)^{r/q}|f(Y)|^{p/q}h(X-Y)^{r(1-1/p)}|f(Y)|^{p\delta}.
\]
Thus by H\"older's inequality and the fact that
\[
\frac{1}{q}+\left(1-\frac{1}{p}\right)+\delta=1
\]
we have 
\begin{align*}
|V_\alpha  f(X)|&\le \int_\Omega h(X-Y)|f(Y)|\,dY\\
&\le \left(\int_\Omega h(X-Y)^r|f(Y)|^p\,dY\right)^{1/q}
     \left(\int_\Omega h(X-Y)^r\,dY\right)^{1-1/p}
     \left(\int_\Omega |f(Y)|^p\,dY\right)^\delta.
\end{align*}
So by \eqref{heatA4}
\[
\|V_\alpha  f\|_{L^q(\Omega)}\le M^{r(1-1/p)}\left(\int_\Omega
  |f(Y)|^p\,dY\right)^\delta J
\]
where 
\begin{align*}
J:&=\left(\int_\Omega\int_\Omega h(X-Y)^r|f(Y)|^p\,dY\,dX\right)^{1/q}\\
&=\left(\int_\Omega|f(Y)|^p\left(\int_\Omega
    h(X-Y)^r\,dX\right)\,dY\right)^{1/q}\\
&\le M^{r/q}\left(\int_\Omega |f(Y)|^p\,dY\right)^{1/q}
\end{align*}
by \eqref{heatA5}. Hence \eqref{heatA2} follows from \eqref{heatA1}.
\end{proof}

\section{Preliminary lemmas}\label{sec4}
In this section we provide some lemmas needed for the proofs of our
results in Section \ref{sec2}. 

\begin{lem}\label{lem4.1}
Suppose $u$ is a $C^{2,1}$ nonnegative solution of
\begin{equation}\label{eq4.1}
 Hu\ge 0\quad \text{in}\quad B_{\sqrt{4R}}(0)\times (0,4R) 
\subset \R^n \times \R,\ n\ge 1,
\end{equation}
where $Hu = u_t-\Delta u$ is the heat operator and $R$ is a positive
constant. Then
\begin{equation}\label{eq4.2}
 u,Hu \in L^1(B_{\sqrt{2R}}(0) \times (0,2R))
\end{equation}
and there exist a finite positive Borel measure $\mu$ on
$B_{\sqrt{2R}}(0)$ and a bounded function\\
$h\in C^{2,1}(B_{\sqrt{R}}(0) \times (-R,R))$ satisfying
\begin{alignat}{2}
 \label{eq4.3}
Hh &= 0  \quad \text{in} &\quad &B_{\sqrt{R}}(0) \times (-R,R)\\
\label{eq4.4}
h &= 0 \quad \text{in} &\quad &B_{\sqrt{R}}(0)\times (-R,0] 
\end{alignat}
such that
\begin{equation}\label{eq4.5}
 u = N +v+h\quad \text{in}\quad B_{\sqrt{R}}(0)\times (0,R)
\end{equation}
where
\begin{align}\label{eq4.6}
 N(x,t) &:= \int^{2R}_0 \int_{|y|<\sqrt{2R}} \Phi(x-y,t-s) Hu(y,s)\,dy\,ds,\\
\label{eq4.7}
v(x,t) &:= \int_{|y|<\sqrt{2R}} \Phi(x-y,t)\,d\mu(y),
\end{align}
and $\Phi$ is the heat kernel \eqref{eq1.5}.
\end{lem}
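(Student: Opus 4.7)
The plan is to carry out a parabolic Riesz decomposition for the nonnegative super-temperature $u$, in the spirit of the Brezis--Lions representation of superharmonic functions. The argument has three stages: (1) a priori $L^1$ bounds on $u$ and $Hu$ on the smaller cylinder; (2) definition of the heat potential $N$ and reduction to a temperature $w:=u-N$; (3) extraction of the initial-trace measure $\mu$ and the bounded remainder $h$.

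For Stage 1, I would first establish $u\in L^1(B_{\sqrt{2R}}(0)\times(0,2R))$ by testing $Hu\ge 0$ against a Dirichlet eigenfunction. Let $\psi\in C^\infty(\overline{B_{\sqrt{3R}}(0)})$ be the first Dirichlet eigenfunction of $-\Delta$ on $B_{\sqrt{3R}}(0)$ with eigenvalue $\lambda_1>0$, positive in the interior and vanishing on $\partial B_{\sqrt{3R}}(0)$. Setting $V(t):=\int_{B_{\sqrt{3R}}}\psi(y)u(y,t)\,dy$, integration by parts gives
\[
0\le\int_{B_{\sqrt{3R}}}\psi\,Hu\,dy=V'(t)+\lambda_1V(t)+\int_{\partial B_{\sqrt{3R}}}u\,\partial_\nu\psi\,dS,
\]
and since $u\ge 0$ and $\partial_\nu\psi\le 0$ on the boundary (Hopf), the boundary integrand is nonpositive, so $V'+\lambda_1 V\ge 0$. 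Hence $V(t)e^{\lambda_1 t}$ is nondecreasing, $V(t)\le V(2R)e^{2R\lambda_1}$ for $t\in(0,2R]$, and because $\psi$ has a positive lower bound on $\overline{B_{\sqrt{2R}}(0)}$ we obtain $\int_{B_{\sqrt{2R}}}u(y,t)\,dy\le C$ uniformly in $t$, from which $u\in L^1$ follows by Fubini. To get $Hu\in L^1$, I would then use a spatial cutoff $\zeta\in C_c^\infty(B_{\sqrt{3R}}(0))$ with $\zeta\equiv 1$ on $\overline{B_{\sqrt{2R}}(0)}$ and the adjoint-temperature test function $G(y,s):=\zeta(y)\Phi(x_0-y,t_0-s)$ for fixed $(x_0,t_0)$ with $t_0>2R$. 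Green's identity produces
\[
\iint G\,Hu\,dy\,ds=\iint u\,H^*G\,dy\,ds+\bigl[\textstyle\int G\,u\,dy\bigr]_{s=\varepsilon}^{s=2R-\varepsilon},
\]
with $H^*G$ supported in the annulus $\{\sqrt{2R}\le|y|\le\sqrt{3R}\}$ where we already control $u\in L^1$, and the time boundary terms uniformly bounded by the first step; since $G$ is bounded below by a positive constant on the smaller cylinder, $Hu\in L^1$ follows.

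For Stage 2, define $N$ by \eqref{eq4.6}: as a heat convolution of the nonnegative $L^1$ function $Hu$ (extended by zero), $N$ is $C^{2,1}$ nonnegative on $B_{\sqrt{R}}(0)\times(0,R)$ with $HN=Hu$ classically there, so $w:=u-N$ is a classical temperature on that cylinder. For Stage 3, applying Green's identity to $w$ and $\Phi(x-\cdot,t-\cdot)$ on $B_{\sqrt{2R}}(0)\times(\varepsilon,t)$ yields
\[
w(x,t)=\int_{B_{\sqrt{2R}}}\Phi(x-y,t-\varepsilon)w(y,\varepsilon)\,dy+h_\varepsilon(x,t),
\]
where $h_\varepsilon$ is a temperature on $B_{\sqrt{R}}(0)\times(\varepsilon,R)$ carrying the lateral boundary values of $w$, uniformly bounded in $\varepsilon$ by smoothness of $u$ on the compact set $\partial B_{\sqrt{2R}}(0)\times[0,R]$. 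The family $\{w(\cdot,\varepsilon)\,dy\}$ of positive measures has uniformly bounded total mass by Stage 1, so Banach--Alaoglu supplies a weak-$*$ subsequential limit $\mu$, a finite positive Borel measure on $\overline{B_{\sqrt{2R}}(0)}$. Passing $\varepsilon\to 0^+$ via dominated convergence (using $\Phi(x-y,t-\varepsilon)\to\Phi(x-y,t)$ uniformly on $\overline{B_{\sqrt{R}}(0)}$ for $t$ bounded away from $0$) then produces \eqref{eq4.5} with $v$ as in \eqref{eq4.7}, and extending the bounded temperature $h$ by $0$ for $t\le 0$ gives \eqref{eq4.3}--\eqref{eq4.4}.

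The main obstacle is the uniformity of the $L^1$ bound in Stage 1 as $\varepsilon\to 0^+$: \textit{a priori} $u$ may concentrate singularly near $s=0$ (as in $u=\Phi$, which yields $\mu=\delta_0$), so naive in-time cutoffs produce divergent estimates. The parabolic eigenfunction test circumvents this by converting $Hu\ge 0$ into the linear ODE inequality $V'+\lambda_1 V\ge 0$ with benign (nonpositive) boundary contribution, yielding the necessary uniform-in-time control on $\int u\,dy$; this is the direct parabolic analog of the Dirichlet eigenfunction trick underlying the Brezis--Lions argument for nonnegative superharmonic functions.
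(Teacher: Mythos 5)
Your overall strategy is sound and in the same Brezis--Lions spirit as the result it parallels, and Stages 1 and 2 are essentially correct: the Dirichlet eigenfunction test (after shrinking the eigenfunction's ball slightly so that $\psi$ has a positive lower bound on the whole annulus you need) gives the uniform-in-time $L^1$ control of $u$, and the adjoint cutoff test function $\zeta(y)\Phi(x_0-y,t_0-s)$ then gives $Hu\in L^1$, because $H^*(\zeta\Phi)$ is supported in the annulus where you already control $u$ in $L^1$ and the time-boundary terms are uniformly bounded. In Stage 2 you should not claim that $N$ is $C^{2,1}$ and that $HN=Hu$ classically --- $Hu$ is merely continuous, not H\"older --- but this is cosmetic: $HN=Hu$ holds distributionally on $B_{\sqrt{2R}}(0)\times(0,2R)$, so $w=u-N$ is a distributional temperature and hence smooth by hypoellipticity, which is all you use.

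The genuine gap is in Stage 3, and it is exactly where the lemma's content lives. You apply Green's identity on $B_{\sqrt{2R}}(0)\times(\varepsilon,t)$ with the uncut kernel $\Phi(x-\cdot,t-\cdot)$ and declare the lateral contribution $h_\varepsilon$ to be uniformly bounded in $\varepsilon$ ``by smoothness of $u$ on the compact set $\partial B_{\sqrt{2R}}(0)\times[0,R]$.'' But $u$ is only assumed to be $C^{2,1}$ on $B_{\sqrt{4R}}(0)\times(0,4R)$, an \emph{open} time interval; nothing prevents $u$ (and $\nabla u$) from blowing up on $\partial B_{\sqrt{2R}}(0)$ as $s\to 0^+$ (for instance $u(y,s)=\Phi(y-x_0,s)$ with $x_0\in\partial B_{\sqrt{2R}}(0)$). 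The lateral integral involves $\int_\varepsilon^t\int_{\partial B_{\sqrt{2R}}}\bigl(|w|+|\nabla w|\bigr)\,dS\,ds$, and your $L^1$ bound on the \emph{solid} cylinder gives no control of $u$ or $\nabla u$ on the boundary sphere, so the claimed uniform bound on $h_\varepsilon$ is unjustified. The fix is to do in Stage 3 what you already did in Stage 1: replace $\Phi(x-\cdot,t-\cdot)$ by $\zeta(\cdot)\Phi(x-\cdot,t-\cdot)$ with $\zeta\in C_c^\infty(B_{\sqrt{2R}}(0))$, $\zeta\equiv 1$ near $\overline{B_{\sqrt{R}}(0)}$. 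Then there is no lateral boundary term at all; in its place one gets $\int_\varepsilon^t\int u\,H^*(\zeta\Phi)$, whose kernel is supported in the annulus, is bounded there (the exponential decay of $\Phi$ and its derivatives for $|x-y|\ge\sqrt{2R}-\sqrt{R}>0$ and $0<t-s\le R$), and is paired against $u\in L^1$, so it is bounded uniformly in $\varepsilon$, converges as $\varepsilon\to 0^+$, and the limit is a bounded temperature in $(x,t)$ on $B_{\sqrt R}(0)\times(0,R)$ that vanishes for $t\le 0$; this is your $h$. A secondary point you glossed over is positivity of $\mu$: $w=u-N$ is not obviously nonnegative, so $\{w(\cdot,\varepsilon)\,dy\}$ are not positive measures as you asserted. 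However $w(\cdot,\varepsilon)\ge -N(\cdot,\varepsilon)$ and, by Fubini and $\int_{\R^n}\Phi(\cdot,\tau)=1$, one has $\int_{B_{\sqrt{2R}}}N(\cdot,\varepsilon)\le\int_0^\varepsilon\int_{B_{\sqrt{2R}}}Hu\to 0$, so the negative parts vanish in total variation and any weak-$*$ limit is a positive measure. For comparison, the paper does not give a proof at all: it invokes the case $R=1$ from reference \cite{T2011} and dispenses with general $R$ by parabolic scaling, so a self-contained proof like yours, once the lateral-term and positivity issues are repaired via the cutoff argument, is a genuine addition.
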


\begin{proof}
When $R=1$, Lemma \ref{lem4.1} was proved in \cite{T2011}. The proof of
Lemma \ref{lem4.1} for $R$ any positive constant is obtained by
scaling the $R=1$ case.
\end{proof}

Watson \cite{W1976} provided another representation formula for
distributional solutions of \eqref{eq4.1} in terms of integral
potentials involving the Green function of the heat operator. See also
Hirata \cite{H2014}.

\begin{rem}\label{rem2}
Under the assumptions of Lemma \ref{lem4.1} we have 
\begin{equation}\label{eq4.8}
(4\pi t)^{n/2} v(x,t) \le \int_{|y|<\sqrt{2R}} d\mu(y)<\infty
\quad \text{for } (x,t)\in \R^n \times (0,\infty).
\end{equation}
Thus by \eqref{eq4.5} we see that 
\[
u(x,t)\le C\left(\left(\frac{1}{\sqrt{t}}\right)^n + N(x,t)\right) 
\quad\text{for }(x,t)\in B_{\sqrt{R}}(0)\times (0,R).
\]
\end{rem}

To prove our results in Section \ref{sec2}, it will be convenient to
use instead of the sets $P_r(x,t)$ and $E_r(x,t)$ the sets  
${\cal P}_r(x,t)$ and ${\cal E}_r(x,t)$ defined by 
\begin{equation}
\begin{aligned}\label{eq4.9}
{\cal P}_r(x,t)&=P_{\sqrt{r}}(x,t)\\
{\cal E}_r(x,t)&=E_{\sqrt{r}}(x,t)
\end{aligned}
\quad\text{for }(x,t)\in\R^n\times\R\quad\text{ and }r>0.
\end{equation}
It follows from
\eqref{eq3.8} and \eqref{eq3.9} that 
\begin{equation}\label{eq4.10}
|{\cal P}_r(x,t)|=r^{\frac{n+2}{2}}|{\cal P}_1(0,0)|
\end{equation}
\begin{equation}\label{eq4.11}
|{\cal E}_r(x,t)|=r^{\frac{n+2}{2}}|{\cal E}_1(0,0)|.
\end{equation}
Also, by Lemma \ref{lem3.1},
\begin{equation}\label{eq4.12}
{\cal E}_r(x,t)\subset {\cal P}_{r_0^2r}(x,t)
\end{equation}
where $r_0=r_0(n)$ is as in Lemma \ref{lem3.1}.

\begin{lem}\label{lem4.2}
  Suppose $(x_0,t_0)\in\R^n\times\R$ and $r>0$. If
  \[
(x,t)\in\overline{{\cal P}_r(x_0,t_0)} \quad \text{and}\quad
  (y,s)\in(\R^n\times\R)\setminus\overline{{\cal P}_{2r}(x_0,t_0)}
\] then
\[
\Phi(x-y,t-s)\le\frac{C(n)}{r^{n/2}}.
\] 
\end{lem}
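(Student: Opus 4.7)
The plan is a direct case analysis on the way in which $(y,s)$ fails to lie in $\overline{\mathcal{P}_{2r}(x_0,t_0)}$. First I would unpack the definitions in \eqref{eq4.9} and \eqref{d-ball} to observe that
\[
\overline{\mathcal{P}_\rho(x_0,t_0)}=\{(y,s):t_0-\rho\le s\le t_0,\ |y-x_0|\le\sqrt{\rho}\},
\]
so the hypothesis $(x,t)\in\overline{\mathcal{P}_r(x_0,t_0)}$ gives $t_0-r\le t\le t_0$ and $|x-x_0|\le\sqrt{r}$, and the hypothesis $(y,s)\notin\overline{\mathcal{P}_{2r}(x_0,t_0)}$ splits into three alternatives: (a) $s>t_0$; (b) $s<t_0-2r$; (c) $|y-x_0|>\sqrt{2r}$.

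If $t\le s$ then $\Phi(x-y,t-s)=0$ by \eqref{eq1.5}, so the conclusion holds trivially; in particular alternative (a) (together with $t\le t_0<s$) is reduced to this observation. Hence I may assume $s<t\le t_0$, leaving only (b) or (c). In alternative (b) I would simply note $t-s>t-(t_0-2r)\ge -r+2r=r$, so the trivial bound $\Phi(x-y,t-s)\le(4\pi(t-s))^{-n/2}$ from \eqref{eq1.5} gives $\Phi(x-y,t-s)\le C(n)r^{-n/2}$.

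In alternative (c) the triangle inequality together with $|x-x_0|\le\sqrt{r}$ yields
\[
|x-y|\ge|y-x_0|-|x-x_0|>\sqrt{2r}-\sqrt{r}=(\sqrt{2}-1)\sqrt{r},
\]
a spatial separation comparable to $\sqrt{r}$. Since the time gap $t-s$ can a priori be as small or as large as $3r$, I would not try to estimate $\Phi$ through $t-s$; instead I would use the elementary one-variable fact that for fixed $z\in\mathbb{R}^n\setminus\{0\}$ the function $\tau\mapsto\Phi(z,\tau)$ is maximised at $\tau=|z|^2/(2n)$ with value $C(n)|z|^{-n}$. Applying this with $z=x-y$ gives
\[
\Phi(x-y,t-s)\le\frac{C(n)}{|x-y|^n}\le\frac{C(n)}{(\sqrt{2}-1)^n\,r^{n/2}},
\]
again of the required form.

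The argument is essentially routine; there is no serious obstacle. The only mildly subtle point is recognising in case (c) that the right quantity to control is the spatial separation $|x-y|$, and that the uniform-in-time pointwise bound $\sup_{\tau>0}\Phi(z,\tau)=C(n)|z|^{-n}$ is what converts a lower bound of order $\sqrt{r}$ on $|x-y|$ into the desired upper bound of order $r^{-n/2}$ on $\Phi$.
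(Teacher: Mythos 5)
Your proof is correct and follows essentially the same case decomposition as the paper: the paper's Case~III ($s\ge t$) is your alternative~(a), Case~II ($s<t_0-2r$) is your alternative~(b) with the same bound $\Phi\le(4\pi(t-s))^{-n/2}\le C(n)r^{-n/2}$, and Case~I ($t_0-2r\le s<t$, forcing $|y-x_0|>\sqrt{2r}$) is your alternative~(c), where the paper's direct computation of $\sup_{\tau>0}e^{-(\sqrt2-1)^2r/(4\tau)}(4\pi\tau)^{-n/2}$ is exactly the estimate $\sup_{\tau>0}\Phi(z,\tau)=C(n)|z|^{-n}$ that you invoke.
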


\begin{proof} {\bf Case I}. Suppose $t_0-2r\le s<t$. Then $|x-y|\ge
  (\sqrt{2}-1) \sqrt{r}$ and hence 
\begin{align*}
\Phi(x-y,t-s)&\le\frac{e^{-\frac{(\sqrt{2}-1)^2r}{4(t-s)}}}{(4\pi
  (t-s))^{n/2}}\le\sup_{\tau>0}\frac{e^{-\frac{(\sqrt{2}-1)^2r}{4\tau}}}{(4\pi
  \tau)^{n/2}}\\
             &=\sup_{\zeta>0}\frac{e^{-(\sqrt{2}-1)^2\zeta}}{(\pi r/\zeta)^{n/2}}
=\frac{C(n)}{r^{n/2}}.
\end{align*}
\noindent {\bf Case II}. Suppose $s<t_0-2r$. Then $t-s\ge r$ and hence 
\[
\Phi(x-y,t-s)\le\frac{1}{(4\pi r)^{n/2}}=\frac{C(n)}{r^{n/2}}.
\]
\noindent {\bf Case III}. Suppose $s\ge t$. Then $\Phi(x-y,t-s)=0$.
\end{proof}

\begin{lem}\label{lem4.3}
 Suppose $K$ is a compact subset of an open set $\Omega\subset\mathbb{R}^n ,\,n\geq 1$, and $u(x,t)$ is a $C^{2,1}$ nonnegative solution of
 \begin{equation}\label{eq4.13}
  Hu\geq 0\quad\text{in }\Omega\times(0,1).
 \end{equation}
 Let $\{(x_j ,t_j )\}\subset K\times(0,1)$ be a sequence such that
 \begin{equation}\label{eq4.14}
  t_j \to 0 \quad\text{as }j\to\infty.
 \end{equation}
 Then for some subsequence of $\{(x_j ,t_j )\}$, which we denote again by $\{(x_j ,t_j )\}$, we have 
 \begin{equation}\label{eq4.15}
  {\cal P}_{t_j}(x_j ,t_j )\subset \Omega\times(0,1),
 \end{equation}
 \begin{equation}\label{eq4.16}
  \iint_{{\cal P}_{t_j}(x_j ,t_j )}Hu(x,t)\,dx\,dt\to 0\quad\text{as }j\to\infty,
 \end{equation}
 and, for all $a\geq 1$,
 \begin{equation}\label{eq4.17}
  u(x,t)\leq C\left[\left(\frac{1}{\sqrt{t_j}}\right)^n +\iint_{{\cal P}_{t_j
        /a}(x_j ,t_j )}\Phi(x-y,t-s)Hu(y,s)\,dy\,ds\right]\quad\text{ for }(x,t)
\in\overline{{\cal P}_{t_j/2a}(x_j ,t_j )}
 \end{equation}
 where $C>0$ does not depend on $(x,t)$ or $j$ (but may depend on $a$).
\end{lem}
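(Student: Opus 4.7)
The plan is to use compactness of $K$ to reduce to a fixed center and radius. After passing to a subsequence I may assume $x_j\to x^*\in K$, and I fix once and for all a small $R>0$ with $4R\le 1$ and $\overline{B_{\sqrt{4R}}(x)}\subset\Omega$ for every $x\in K$. Applying Lemma~\ref{lem4.1} to the translated function $\tilde u(y,s):=u(y+x^*,s)$ yields a decomposition $u=N+v+h$ on $B_{\sqrt R}(x^*)\times(0,R)$, where $N$ is the heat potential of $Hu$ over the fixed set $W:=B_{\sqrt{2R}}(x^*)\times(0,2R)$, $v$ is the heat extension of a fixed finite measure $\mu$, and $h$ is a fixed bounded function vanishing for $t\le 0$. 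Because the decomposition is anchored at the fixed $x^*$, the quantities $\|\mu\|$, $\|h\|_\infty$, and $\|Hu\|_{L^1(W)}$ are finite constants independent of $j$.

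Statement \eqref{eq4.15} is then immediate: for $j$ so large that $|x_j-x^*|$ and $\sqrt{t_j}$ are sufficiently small, $\mathcal{P}_{t_j}(x_j,t_j)=B_{\sqrt{t_j}}(x_j)\times(0,t_j)\subset\Omega\times(0,1)$. For \eqref{eq4.16}, these same $j$ satisfy $\mathcal{P}_{t_j}(x_j,t_j)\subset W$, and since $|\mathcal{P}_{t_j}(x_j,t_j)|=c_n\,t_j^{(n+2)/2}\to 0$, absolute continuity of the Lebesgue integral of $Hu\in L^1(W)$ finishes that part.

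For \eqref{eq4.17}, I fix $a\ge 1$ and take $(x,t)\in\overline{\mathcal{P}_{t_j/(2a)}(x_j,t_j)}$, which for $j$ large sits inside $B_{\sqrt R}(x^*)\times(0,R)$, so the decomposition applies. Since $a\ge 1$ forces $t\ge t_j/2$, estimate \eqref{eq4.8} gives $v(x,t)\le C\|\mu\|/t_j^{n/2}$; boundedness of $h$ combined with $t_j\le 1$ gives $|h(x,t)|\le\|h\|_\infty/t_j^{n/2}$. I then split the region of integration of $N$ at $\mathcal{P}_{t_j/a}(x_j,t_j)$, which for $j$ large lies in $W$; the inside piece is exactly the potential term appearing on the right-hand side of \eqref{eq4.17}, while the outside piece is bounded by applying Lemma~\ref{lem4.2} at center $(x_j,t_j)$ with $r=t_j/(2a)$, yielding $\Phi(x-y,t-s)\le C(n,a)/t_j^{n/2}$ for $(y,s)\notin\overline{\mathcal{P}_{t_j/a}(x_j,t_j)}$, so this piece is at most $(C(n,a)/t_j^{n/2})\|Hu\|_{L^1(W)}$. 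Summing the four bounds produces \eqref{eq4.17}. The main subtlety will be maintaining $j$-independence of $\|\mu\|$, $\|h\|_\infty$, and $\|Hu\|_{L^1(W)}$; this is handled by pinning the Lemma~\ref{lem4.1} decomposition at the limit point $x^*$ rather than at the moving center $x_j$, after which Lemma~\ref{lem4.2} supplies the uniform $t_j^{-n/2}$ control of the far-field part of the potential.
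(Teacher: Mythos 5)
Your proposal is correct and follows essentially the same route as the paper's proof. Both arguments (i) pass to a subsequence so $x_j\to x^*\in K$ and fix a small radius around $x^*$ so that Lemma~\ref{lem4.1} applies with $j$-independent constants; (ii) obtain \eqref{eq4.15} trivially for large $j$ and \eqref{eq4.16} by absolute continuity of the $L^1$ integral of $Hu$ over the fixed cylinder; and (iii) prove \eqref{eq4.17} by bounding $v$ and $h$ by $Ct_j^{-n/2}$ (using $t\ge t_j/2$ and $t_j\le 1$) and splitting the potential $N$ at $\mathcal{P}_{t_j/a}(x_j,t_j)$, with the far-field part controlled via Lemma~\ref{lem4.2} applied at center $(x_j,t_j)$ with $r=t_j/(2a)$. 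The paper packages the $v$-, $h$- and $t^{-n/2}$-terms through Remark~\ref{rem2} (producing its equation \eqref{eq4.20}) and works with a small $\varepsilon$ so that $\overline{\mathcal{P}_{t_j}(x_j,t_j)}\subset\mathcal{P}_\varepsilon(x_0,\varepsilon)$, whereas you translate to $x^*$ and keep the three terms separate, but these are cosmetic differences; the structure and all key estimates coincide.
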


\begin{proof}
  By taking a subsequence of $\{(x_j ,t_j )\}$ we can assume there 
exists $x_0 \in K$ such that 
  $x_j \to x_0$ as $j\to\infty,$ and, for some $\varepsilon>0$,
  ${\cal P}_{4\varepsilon}(x_0 ,4\varepsilon)\subset\Omega\times(0,1)$ and
 \begin{equation}\label{eq4.18}
  \overline{{\cal P}_{t_j}(x_j ,t_j )}\subset{\cal P}_\varepsilon (x_0 ,\varepsilon)\quad\text{ for }j=1,2,\ldots.
 \end{equation}
 Thus \eqref{eq4.15} holds.
 
 By \eqref{eq4.13}, Lemma \ref{lem4.1}, and Remark
 \ref{rem2}, we have
 \begin{equation}\label{eq4.19}
  \iint_{{\cal P}_{2\varepsilon}(x_0,2\varepsilon)}Hu(x,t)\,dx\,dt<\infty
 \end{equation}
 and, for $(x,t)\in{\cal P}_\varepsilon (x_0 ,\varepsilon)$,
 \begin{equation}\label{eq4.20}
  u(x,t)\leq C\left[\left(\frac{1}{\sqrt{t}}\right)^n +\iint_{{\cal P}_{2\varepsilon}(x_0 ,2\varepsilon)}\Phi(x-y,t-s)Hu(y,s)\,dy\,ds\right] 
 \end{equation}
 where $C>0$ does not depend on $(x,t)$.  However, for
 $$(x,t)\in\overline{{\cal P}_{t_j/2a}(x_j ,t_j )}\quad\text{ and
 }\quad(y,s)\in{\cal P}_{2\varepsilon}(x_0
 ,2\varepsilon)\backslash\overline{{\cal P}_{t_j/a}(x_j ,t_j )}$$ 
we have by Lemma \ref{lem4.2} that
 $$\Phi(x-y,t-s)\leq C(n)\left(\frac{a}{t_j}\right)^{n/2}$$ 
and thus by \eqref{eq4.19} we find that
 $$\iint_{{\cal P}_{2\varepsilon}(x_0 ,2\varepsilon)\backslash{\cal
     P}_{t_j/a}(x_j ,t_j
   )}\Phi(x-y,t-y)Hu(y,s)\,dy\,ds<\frac{C}{t^{n/2}_{j}}\quad\text{ for }(x,t)
\in\overline{{\cal P}_{t_j/2a}(x_j ,t_j )}.$$
Inequality \eqref{eq4.17} therefore follows from \eqref{eq4.20}.  
Finally, \eqref{eq4.18} and \eqref{eq4.19} imply \eqref{eq4.16}. 
\end{proof}

\begin{lem}\label{lem4.4}
  Suppose $\Omega_2 \subset\subset\Omega_1 \subset\subset\Omega_0$ are
  nonempty open subsets of $\mathbb{R}^n ,n\geq 1$.  Let $u(x,t)$ be a
  $C^{2,1}$ nonnegative solution of
 $$Hu\geq 0\quad\text{ in }\Omega_0 \times(0,1)$$ 
satisfying
 \begin{equation}\label{eq4.21}
 \max_{x\in\overline{\Omega}_1}Hu(x,t)
=O\left(\left(\frac{1}{\sqrt{t}}\right)^\gamma \right)\quad\text{ as }t\to 0^+  
 \end{equation}
 where $\gamma$ is a real constant.  Then
 \begin{equation}\label{eq4.22}
 \max_{x\in\overline{\Omega}_2}u(x,t)=O\left(\left(\frac{1}{\sqrt{t}}\right)^n \right)+o\left(\left(\frac{1}{\sqrt{t}}\right)^{\gamma\frac{n}{n+2}}\right)\quad\text{ as }t\to 0^+ .
 \end{equation}
 If, in addition, $\gamma>n+2$ and $v(x,t)$ is a $C^{2,1}$ nonnegative solution of
 \begin{equation}\label{eq4.23}
  0\leq Hv\leq \left(u+\left(\frac{1}{\sqrt{t}}\right)^n \right)^\sigma \quad\text{ in }\Omega_0 \times(0,1) 
 \end{equation}
 where $\sigma>\frac{2}{n}$ then
 \begin{equation}\label{eq4.24}
  \max_{x\in\overline{\Omega}_2}v(x,t)
=O\left(\left(\frac{1}{\sqrt{t}}\right)^n \right)
+o\left(\left(\frac{1}{\sqrt{t}}\right)^{\gamma\frac{n\sigma-2}{n+2}}\right)
\quad\text{as }t\to 0^+.
 \end{equation}
\end{lem}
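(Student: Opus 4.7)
The plan is to prove both estimates by using the representation from Lemma~\ref{lem4.3}, combined with the Hedberg-type inequality Theorem~\ref{thm3.2} for \eqref{eq4.22}, and with the nonlinear heat potential estimate Theorem~\ref{nonlinear} for \eqref{eq4.24}.

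For \eqref{eq4.22}, I pick an arbitrary sequence $t_j\to 0^+$ and $x_j\in\overline{\Omega}_2$ with $u(x_j,t_j)=\max_{\overline{\Omega}_2}u(\cdot,t_j)$, and pass to a subsequence so that Lemma~\ref{lem4.3} applies with, say, $a=2$ (so that $s\ge t_j/2$ on the support of the truncated source). Setting $f_j:=Hu\cdot\mathbf{1}_{{\cal P}_{t_j/2}(x_j,t_j)}$, Lemma~\ref{lem4.3} gives $u(x_j,t_j)\le C[t_j^{-n/2}+(J_2\ast f_j)(x_j,t_j)]$. Theorem~\ref{thm3.2} with $\alpha=2$, $p=1$, together with the trivial bound $Mf_j\le\|f_j\|_\infty$, yields
\[
(J_2\ast f_j)(x_j,t_j)\le C\|f_j\|_1^{2/(n+2)}\|f_j\|_\infty^{n/(n+2)}.
\]
By \eqref{eq4.16}, $\|f_j\|_1\to 0$, and by \eqref{eq4.21} together with $s\ge t_j/2$ on $\operatorname{supp} f_j$, we have $\|f_j\|_\infty\le Ct_j^{-\gamma/2}$. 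The product is $o(t_j^{-\gamma n/(2(n+2))})$, giving \eqref{eq4.22}; a standard subsequence-of-subsequence argument converts this back into a statement about the original sequence.

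For \eqref{eq4.24} the key idea is to invoke Lemma~\ref{lem4.3} simultaneously for $v$ (with parameter $a=4$) and for $u$ (with parameter $a=2$) at $(x_j,t_j)$. The first gives $v(x_j,t_j)\le C(t_j^{-n/2}+I_j)$ with $I_j=\iint_{{\cal P}_{t_j/4}(x_j,t_j)}\Phi(x_j-y,t_j-s)Hv(y,s)\,dy\,ds$; the second provides the pointwise bound $u(y,s)\le C(t_j^{-n/2}+(J_2\ast F_j)(y,s))$ on $\overline{{\cal P}_{t_j/4}(x_j,t_j)}$ with $F_j:=Hu\cdot\mathbf{1}_{{\cal P}_{t_j/2}(x_j,t_j)}$. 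Since $s\asymp t_j$ on ${\cal P}_{t_j/4}$, combining with $Hv\le(u+s^{-n/2})^\sigma$ gives $Hv(y,s)\le C(t_j^{-n\sigma/2}+(J_2\ast F_j)(y,s)^\sigma)$ there, and hence $I_j\le I_j^{(1)}+I_j^{(2)}$ with
\[
I_j^{(1)}\le Ct_j^{-n\sigma/2}\iint_{{\cal P}_{t_j/4}}\Phi\le Ct_j^{1-n\sigma/2},\qquad I_j^{(2)}\le\|J_2\ast((J_2\ast F_j)^\sigma)\|_\infty.
\]

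The main obstacle is estimating $I_j^{(2)}$ with the sharp exponent: a direct Hedberg estimate of $J_2\ast Hv$ only gives an exponent of order $\gamma n^2\sigma/(2(n+2)^2)$, which is too crude. The resolution is Theorem~\ref{nonlinear} applied with $\alpha=\beta=2$ and $r=1$; the hypothesis $\sigma>2/n$ is precisely what ensures both $\sigma>\alpha/(n+2-\beta)$ and $1<(n+2)\sigma/(2+2\sigma)$, so the theorem gives
\[
I_j^{(2)}\le C\|F_j\|_1^{2(1+\sigma)/(n+2)}\|F_j\|_\infty^{(n\sigma-2)/(n+2)}=o(1)\cdot t_j^{-\gamma(n\sigma-2)/(2(n+2))}
\]
by \eqref{eq4.16} and \eqref{eq4.21}. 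Finally, $I_j^{(1)}\le Ct_j^{1-n\sigma/2}$ is $o(t_j^{-\gamma(n\sigma-2)/(2(n+2))})$ precisely because $(\gamma-n-2)(n\sigma-2)>0$, which is guaranteed by our hypotheses $\gamma>n+2$ and $\sigma>2/n$. Summing all contributions, together with the $Ct_j^{-n/2}$ term from Lemma~\ref{lem4.3}, yields \eqref{eq4.24}.
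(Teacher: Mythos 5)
Your proof is correct, and for the second conclusion \eqref{eq4.24} it follows the paper's argument essentially verbatim: apply Lemma~\ref{lem4.3} to $u$ (with $a=2$) and to $v$ (with $a=4$), insert the representation bound for $u$ into $Hv\le(u+s^{-n/2})^\sigma$, and control the resulting nonlinear potential by Theorem~\ref{nonlinear} with $\alpha=\beta=2$ and $r=1$, which is admissible precisely when $\sigma>2/n$. Your treatment of the lower-order contribution $I_j^{(1)}\le Ct_j^{1-n\sigma/2}$, absorbed using $\gamma>n+2$ and $n\sigma>2$, matches the $(1/\sqrt{t_j})^{n\sigma-2}$ term in the paper's display \eqref{eq4.33}. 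For the first conclusion \eqref{eq4.22}, however, you take a genuinely different and cleaner route. The paper does not invoke the Hedberg-type Theorem~\ref{thm3.2} here; instead it runs an explicit rearrangement ("bathtub") argument, choosing a radius $r_j$ by mass balance so that the $\Phi$-weighted integral of $Hu$ over ${\cal P}_{t_j/2}$ is dominated by that of the constant $A/t_j^{\gamma/2}$ over ${\cal E}_{r_j}$, and this forces the paper to first reduce to $\gamma\ge n+2$ so that $r_j\ll t_j$. You instead apply Theorem~\ref{thm3.2} directly with $p=1$, $\alpha=2$ and the trivial bound $Mf_j\le\|f_j\|_\infty$, obtaining $(J_2*f_j)(x_j,t_j)\le C\|f_j\|_1^{2/(n+2)}\|f_j\|_\infty^{n/(n+2)}$. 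This is in effect the bound the paper's rearrangement re-derives from scratch, but your invocation of the already-proved Theorem~\ref{thm3.2} is shorter and does not require the preliminary reduction to $\gamma\ge n+2$, since the Hedberg estimate works for every nonnegative $f$. In both parts you also correctly flag that Lemma~\ref{lem4.3} gives its representation only along a subsequence, so a contradiction or subsequence-of-subsequence step is needed to return to the full statement, exactly as in the paper's contradiction setup \eqref{eq4.25}--\eqref{eq4.26} and \eqref{eq4.31}--\eqref{eq4.32}.
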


\begin{proof}
  For the proof of \eqref{eq4.22} we can assume $\gamma\geq n+2$
  because increasing $\gamma$ to $n+2$ weakens condition
  \eqref{eq4.21} and does not change \eqref{eq4.22}.
 
 Suppose for contradiction that \eqref{eq4.22} is false.  Then there exists a sequence $\{(x_j ,t_j )\}\subset\overline{\Omega}_2 \times(0,1)$ such that $t_j \to 0$ as $j\to\infty$ and either
 \begin{equation}\label{eq4.25}
  \lim_{j\to\infty}\sqrt{t_j}^n u(x_j ,t_j )=\infty\quad\text{if }n\geq\gamma\frac{n}{n+2}
 \end{equation}
 or
 \begin{equation}\label{eq4.26}
  \liminf_{j\to\infty}\sqrt{t_j}^{\gamma\frac{n}{n+2}}u(x_j ,t_j )>0\quad\text{if }n<\gamma\frac{n}{n+2}.
 \end{equation}
 By taking a subsequence, we have by Lemma \ref{lem4.3} with $\Omega=\Omega_1$, $K=\overline{\Omega}_2$, and $a=2$ applied to the function $u$ that the sequence $\{(x_j ,t_j )\}$ satisfies
 \begin{equation}\label{eq4.27}
  {\cal P}_{t_j}(x_j ,t_j )\subset\Omega_1 \times(0,1)
 \end{equation}
 and the function $u$ satisfies \eqref{eq4.16} and \eqref{eq4.17}.
 
 By \eqref{eq4.21} and \eqref{eq4.27} we have
 \begin{equation}\label{eq4.28}
  Hu(x,t)\leq\frac{A}{\sqrt{t_j}^\gamma}\quad\text{ for }
(x,t)\in{\cal P}_{t_j /2}(x_j ,t_j )
 \end{equation}
 where $A$ is a positive constant which does not depend on $(x,t)$ or $j$.
 
 Define $r_j \geq 0$ by
 \begin{equation}\label{eq4.29}
  \iint_{{\cal E}_{r_j}(x_j ,t_j )}\frac{A}{\sqrt{t_j}^\gamma}\,dx\,dt=\iint_{{\cal P}_{t_j /2}(x_j ,t_j )}Hu(x,t)\,dx\,dt\to 0\quad\text{ as }j\to\infty
 \end{equation}
 by \eqref{eq4.16}.  Then by \eqref{eq4.11} we have
\begin{equation}\label{eq4.30}
  r_j =o\left(t^{\frac{\gamma}{n+2}}_{j}\right)<<t_j \quad\text{ as }j\to\infty
 \end{equation}
 because $\gamma\geq n+2$.  
Hence by \eqref{eq4.12}, 
\[
{\cal E}_{r_j}(x_j,t_j)\subset {\cal P}_{t_j/2}(x_j,t_j)
\quad\text{for large }j.
\]
Thus by \eqref{eq4.9}, \eqref{eq4.28} and \eqref{eq4.29} we have for
large $j$ that
\begin{align*}
\iint_{{\cal E}_{r_j}(x_j,t_j)}
&\Phi(x_j-y,t_j-s)\left(\frac{A}{\sqrt{t_j}^\gamma}-Hu(y,s)\right)dy\,ds\\
&\ge \frac{1}{r_j^{n/2}}\iint_{{\cal E}_{r_j}(x_j,t_j)}
\left(\frac{A}{\sqrt{t_j}^\gamma}-Hu(y,s)\right)\,dy\,ds\\
&=\frac{1}{r_j^{n/2}}\iint_{{\cal P}_{t_j/2}(x_j,t_j)\setminus{\cal E}_{r_j}(x_j,t_j)}
Hu(y,s)\,dy\,ds\\
&\ge \iint_{{\cal P}_{t_j/2}(x_j,t_j)\setminus{\cal E}_{r_j}(x_j,t_j)}
\Phi(x_j-y,t_j-s)Hu(y,s)\,dy\,ds.
\end{align*}
So for large $j$ we have
\begin{align*}
\iint_{{\cal P}_{t_j/2}(x_j,t_j)}\Phi(x_j-y,t_j-s)Hu(y,s)\,dy\,ds
&\le \frac{A}{\sqrt{t_j}^\gamma}\iint_{{\cal E}_{r_j}(x_j,t_j)}
\Phi(x_j-y,t_j-s)\,dy\,ds\\
&\le\frac{Ar_0^2r_j}{\sqrt{t_j}^\gamma}
\end{align*}
by \eqref{eq4.12} and the fact that $\int_{\R^n}\Phi(x_j-y,t_j-s)\,dy\,ds=1$
for $s<t_j$.
Hence by \eqref{eq4.17} and \eqref{eq4.30}  we find that
\begin{align*}
  u(x_j ,t_j )&\leq C\left[\left(\frac{1}{\sqrt{t_j}}\right)^n 
+\iint_{{\cal P}_{t_j /2}(x_j ,t_j )}\Phi(x_j -y,t_j -s)Hu(y,s)\,dy\,ds\right]\\
 &\leq C\left[\left(\frac{1}{\sqrt{t_j}}\right)^n +\frac{Ar_0^2r_j}{\sqrt{t_j}^\gamma}\right]\\
  &=C\left[\left(\frac{1}{\sqrt{t_j}}\right)^n +o\left(\left(\frac{1}{\sqrt{t_j}}\right)^{\gamma\frac{n}{n+2}}\right)\right]\quad\text{ as }j\to 0
 \end{align*}
 which contradicts \eqref{eq4.25}, \eqref{eq4.26} and thereby proves
 \eqref{eq4.22}.
 
 Suppose for contradiction that \eqref{eq4.24} is false.  Then there
 exists a sequence $\{(x_j ,t_j )\}\subset\overline{\Omega}_2
 \times(0,1)$ such that $t_j \to 0$ as $j\to\infty$ and either
 \begin{equation}\label{eq4.31}
  \lim_{j\to\infty}\sqrt{t_j}^n v(x_j ,t_j )=\infty\quad\text{ if }n\geq\gamma\frac{n\sigma-2}{n+2}
 \end{equation}
 or
 \begin{equation}\label{eq4.32}
  \liminf_{j\to\infty}\sqrt{t_j}^{\gamma\frac{n\sigma-2}{n+2}}v(x_j ,t_j )>0\quad\text{ if }n<\gamma\frac{n\sigma-2}{n+2}.
 \end{equation}
 By taking a subsequence, we have by Lemma \ref{lem4.3} with
 $\Omega=\Omega_1 $, $K=\overline{\Omega}_2$ , and $a=2$ applied to the
 function $u$ that the sequence $\{(x_j ,t_j )\}$ satisfies
 \eqref{eq4.27} and the function $u$ satisfies \eqref{eq4.16} and
 \eqref{eq4.17}.  Thus for $(x,t)\in\overline{{\cal P}_{t_j/4}(x_j
   ,t_j )}$ we have by \eqref{eq4.23} that
 $$Hv(x,t)\leq\left(u(x,t)+\left(\frac{1}{\sqrt{t}}\right)^n\right)^\sigma
 \leq
 C\left[\left(\frac{1}{\sqrt{t_j}}\right)^{n\sigma}+\left((N_{{\cal
         P}_{t_j/2}(x_j ,t_j )}(Hu))(x,t)\right)^\sigma \right]$$ 
where 
 $$(N_D f)(x,t):=\iint_D \Phi(x-y,t-s)f(y,s)\,dy\,ds.$$  
 Hence applying Lemma \ref{lem4.3} to $v$ with $\Omega=\Omega_1$,
 $K=\overline{\Omega}_2$, and $a=4$  we get
 \begin{align}\label{eq4.33}
   \notag v(x_j ,t_j )&\leq C\left[\left(\frac{1}{\sqrt{t_j}}\right)^n 
+\iint_{{\cal P}_{t_j/4}(x_j ,t_j )}\Phi(x_j -y,t_j -s)Hv(y,s)\,dy\,ds\right]\\
   &\leq C\left[\left(\frac{1}{\sqrt{t_j}}\right)^n
     +\left(\frac{1}{\sqrt{t_j}}\right)^{n\sigma-2}+(K_j (Hu))(x_j ,t_j )\right]
 \end{align}
 where
 $$K_j f=N_{{\cal P}_{t_j/2}(x_j ,t_j )}\left((N_{{\cal P}_{t_j/2}(x_j ,t_j )}f)^\sigma \right).$$
 Since $\sigma>\frac{2}{n}$ we find using \eqref{eq4.16} and
 \eqref{eq4.21} in Theorem \ref{nonlinear} (with $\alpha=\beta=2$ and
 $r=1$) that
 $$(K_j (Hu))(x_j ,t_j)
=o\left(\left(\frac{1}{\sqrt{t_j}}\right)^{\gamma\frac{n\sigma-2}{n+2}}\right)
\quad\text{as }j\to\infty.$$  Thus \eqref{eq4.33} contradicts 
(\ref{eq4.31}, \ref{eq4.32}).  This completes the proof of \eqref{eq4.24}.
\end{proof}

\begin{lem}\label{lem4.5}
 Suppose $u$ and $v$ are $C^{2,1}$ nonnegative solutions of the system 
\begin{equation}
 \begin{aligned}\label{eq4.34}
   &0\leq Hu\\
  &0\leq Hv\leq\left(u+\left(\frac{1}{\sqrt{t}}\right)^n \right)^\sigma 
 \end{aligned}
\quad\quad\text{ in }\Omega\times(0,1)
\end{equation}
 where $\Omega$ is a open subset of $\mathbb{R}^n ,\,n\geq 1$.  Let $K$ be a compact subset of $\Omega$.
 \begin{enumerate}
  \item[(i)] If $\sigma<2/n$ then 
  \begin{equation}\label{eq4.35}
   \max_{x\in K}v(x,t)=O\left(\left(\frac{1}{\sqrt{t}}\right)^n \right)\quad\text{ as }t\to0^+ .
  \end{equation}
  \item[(ii)] If 
  \begin{equation}\label{eq4.36}
   \lambda>\frac{n+2}{n}\quad\text{ and }\quad
\sigma<\frac{2}{n}+\frac{n+2}{n\lambda}
  \end{equation}
  and
  \begin{equation}\label{eq4.37}
   Hu\leq\left(v+\left(\frac{1}{\sqrt{t}}\right)^n \right)^\lambda \quad\text{ in }\Omega\times(0,1)
  \end{equation}
  then for some $\gamma>n+2$ we have
  \begin{equation}\label{eq4.38}
   \max_{x\in K}Hu(x,t)=O\left(\left(\frac{1}{\sqrt{t}}\right)^\gamma \right)\quad\text{ as }t\to0^+ .
  \end{equation}
 \end{enumerate}
\end{lem}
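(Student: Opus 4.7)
The plan is to argue (i) by contradiction, using the representation from Lemma \ref{lem4.3} together with the heat potential estimates of Section \ref{sec3}; part (ii) then follows from (i) in one case and from a short bootstrap in the other.

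For (i), assume for contradiction that $t_j^{n/2}v(x_j,t_j)\to\infty$ along some sequence $(x_j,t_j)\in K\times(0,1)$ with $t_j\to 0^+$. Passing to a subsequence and applying Lemma \ref{lem4.3} to $v$ with $a=2$ and to $u$ with $a=1$ gives
\[
v(x_j,t_j)\le C\left[t_j^{-n/2}+\iint_{\mathcal{P}_{t_j/2}(x_j,t_j)}\Phi(x_j-y,t_j-s)Hv(y,s)\,dy\,ds\right]
\]
together with the pointwise bound $u(y,s)\le C[t_j^{-n/2}+(J_2\ast f_j)(y,s)]$ on the integration region, where $f_j:=Hu\cdot\chi_{\mathcal{P}_{t_j}(x_j,t_j)}$ satisfies $\|f_j\|_{L^1}\to 0$ by \eqref{eq4.16}. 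Inserting this into $Hv\le(u+s^{-n/2})^\sigma\le C\bigl(s^{-n\sigma/2}+(J_2\ast f_j)^\sigma\bigr)$ splits the resulting integral into two pieces. The first, $\iint\Phi\cdot s^{-n\sigma/2}\,dy\,ds$, is bounded by $Ct_j^{1-n\sigma/2}=o(t_j^{-n/2})$ using $s\in(t_j/2,t_j)$ on the region, $\int\Phi\,dy=1$, and $\sigma<2/n$. The second is the $L^\infty$ norm of a nonlinear heat potential: by Theorem \ref{HeatPotential} with $p=1$, $\|J_2\ast f_j\|_{L^q}\le C\|f_j\|_{L^1}$ for every $q<(n+2)/n$, and because $\sigma<2/n$ I may choose $q\in\bigl(\sigma(n+2)/2,(n+2)/n\bigr)$ so that $p:=q/\sigma>(n+2)/2$; a second application of Theorem \ref{HeatPotential} then yields
\[
\|J_2\ast(J_2\ast f_j)^\sigma\|_{L^\infty}\le C\|(J_2\ast f_j)^\sigma\|_{L^p}=C\|J_2\ast f_j\|_{L^q}^\sigma\le C\|f_j\|_{L^1}^\sigma\to 0.
\]
Multiplying through by $t_j^{n/2}$ bounds $t_j^{n/2}v(x_j,t_j)$, contradicting the assumption.

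For (ii), split on $\sigma$. If $\sigma<2/n$, part (i) yields $v=O(t^{-n/2})$ on $K$, so $Hu\le(v+t^{-n/2})^\lambda=O(t^{-n\lambda/2})$ and we set $\gamma:=n\lambda>n+2$ using $\lambda>(n+2)/n$. If $2/n\le\sigma<2/n+(n+2)/(n\lambda)$, it suffices to produce \emph{any} polynomial pointwise bound $v(x,t)\le Ct^{-\alpha/2}$ on $K$, since that already gives $Hu\le Ct^{-\lambda\max(\alpha,n)/2}$ with $\lambda\max(\alpha,n)\ge n\lambda>n+2$. Such a bound follows from a finite bootstrap: starting from $Hu,Hv\in L^1_{\mathrm{loc}}$ (Lemma \ref{lem4.1}), alternate applications of Theorem \ref{HeatPotential} to the heat representations of $u$ and $v$ raise their local $L^p$ integrability, while the system inequalities $Hu\le(v+t^{-n/2})^\lambda$ and $Hv\le(u+t^{-n/2})^\sigma$ propagate the gains to $Hu$ and $Hv$; after finitely many steps the condition $\sigma<2/n+(n+2)/(n\lambda)$ defining region $B$ makes Theorem \ref{nonlinear} applicable with some $r>1$ and $\alpha=\beta=2$, yielding the required polynomial bound on the nonlinear heat potential that controls $v$ at $(x_j,t_j)$, exactly as in the argument of part (i).

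The principal obstacle is the bookkeeping in the bootstrap of the second case of (ii): one must identify an explicit iteration of local $L^p$ exponents whose strict monotonicity is guaranteed by the strict inequality $\sigma<2/n+(n+2)/(n\lambda)$, and one must verify that the $\|f\|_\infty$ factor in Theorem \ref{nonlinear} is controlled at each step by the iteratively-improved data. Because (ii) only asks for the existence of some $\gamma>n+2$ (not a sharp one), a very crude polynomial exponent $\alpha$ suffices, which considerably relaxes the constraints on the iteration.
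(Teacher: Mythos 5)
Your part (i) is correct and uses essentially the same ingredients as the paper, packaged slightly differently: the paper rescales to the fixed domain ${\cal P}_2(0,0)$, applies Theorem \ref{HeatPotential} once with $p=1$, and finishes with a H\"older step involving $\Phi^*$, whereas you apply Theorem \ref{HeatPotential} twice on the shrinking slab $\R^n\times(0,t_j)$ and exploit the $(b-a)$-dependent constant in \eqref{heatA3}. Both routes are valid; if you carry out your version, be careful to record the $t_j$-dependent factors $M$ (you do get positive powers of $t_j$, which only help). Your handling of the subcase $\sigma<2/n$ of part (ii) by a direct appeal to part (i) is also fine and a bit cleaner than the paper's WLOG reduction.

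The genuine gap is in the main case of part (ii), $2/n\le\sigma<\tfrac{2}{n}+\tfrac{n+2}{n\lambda}$, which is the heart of the lemma. Your outline (``alternate applications of Theorem \ref{HeatPotential} \dots raise their local $L^p$ integrability \dots after finitely many steps \dots makes Theorem \ref{nonlinear} applicable with some $r>1$'') is not a proof, and the final step is misdirected. Theorem \ref{nonlinear} always produces a factor $\|f\|_\infty^{(\sigma(n+2-\beta r)-\alpha r)/(n+2)}$ with a strictly positive exponent for every admissible $r$, so you cannot close the argument with it unless you already have a (suitably scaled) $L^\infty$ bound on $Hu$ --- but that bound is exactly what part (ii) is asking you to prove. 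The paper avoids this circularity: it never invokes Theorem \ref{nonlinear} in the proof of Lemma \ref{lem4.5}(ii). Instead it rescales to $f_j(\xi,\tau)=r_j^{(n+2)/2}Hu(x_j+\sqrt{r_j}\xi,\,t_j+r_j\tau)$, derives the closed superlinear inequality $f_j\le C(Rr_j)^{-a}\bigl(1+(M_{4R}f_j)^\lambda\bigr)$ from the system \eqref{eq4.34} and \eqref{eq4.37}, and then runs the quantitative Moser iteration of Lemma \ref{lem4.6}, whose crucial feature is the uniform gain $1/p-1/q>C_0(n,\lambda,\sigma)$ at each step (this is where $\sigma<\tfrac2n+\tfrac{n+2}{n\lambda}$ enters through the $\varepsilon$ of \eqref{eq4.54}), so that finitely many iterations reach $L^\infty$. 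You acknowledge that ``the bookkeeping \dots is the principal obstacle,'' but that bookkeeping is precisely the content of the lemma; a sketch pointing at Theorem \ref{nonlinear} does not close it. To complete your argument you would need to formulate and prove an iteration step with a uniform, $p$-independent gain in $1/p$, exactly as in Lemma \ref{lem4.6}.
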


\begin{proof}
 We can assume for the proof of (i) (resp. (ii)) that
 \begin{equation}\label{eq4.39}
  0<\sigma<\frac{2}{n}
 \end{equation}
 \begin{equation}\label{eq4.40}
  \left(\text{resp. }\frac{2}{n}<\sigma<\frac{2}{n}
+\frac{n+2}{n\lambda}\right)
 \end{equation}
 because increasing $\sigma$ weakens the condition \eqref{eq4.34}$_2$ on $v$ but does not change the estimates \eqref{eq4.35} or \eqref{eq4.38}.
 
 Suppose for contradiction that (i) (resp. (ii)) is false.  Then there exists a sequence \\
 $\{(x_j ,t_j )\}\subset K\times(0,1)$ such that $t_j \to0$ as $j\to\infty$ and
 \begin{equation}\label{eq4.41}
  \sqrt{t_j}^n v(x_j ,t_j )\to\infty\quad\text{ as }j\to\infty
 \end{equation}
 \begin{equation}\label{eq4.42}
  \text{(resp. }\sqrt{t_j}^\gamma Hu(x_j ,t_j )\to\infty\quad\text{ as }j\to\infty
 \end{equation}
 for all $\gamma>n+2$).  To obtain a single sequence $\{(x_j ,t_j )\}$
 such that \eqref{eq4.42} holds for all $\gamma>n+2$, one uses a
 standard diagonalization argument.
 
 By taking a subsequence we have by Lemma \ref{lem4.3} that ${\cal P}_{t_j}(x_j ,t_j )\subset\Omega\times(0,1)$, 
 \begin{equation}\label{eq4.43}
  \iint_{{\cal P}_{t_j/2}(x_j ,t_j )}Hu(y,s)\,dy\,ds\to0
\quad\text{ and }\iint_{{\cal P}_{t_j/2}(x_j ,t_j )}Hv(y,s)\,dy\,ds\to0\quad\text{ as }j\to\infty,
 \end{equation}
 and for $(x,t)\in\overline{{\cal P}_{t_{j/4}}(x_j ,t_j )}$ we have
 \begin{equation}\label{eq4.44}
  u(x,t)\leq C\left[\left(\frac{1}{\sqrt{t_j}}\right)^n +\iint_{{\cal P}_{t_j/2}(x_j ,t_j )}\Phi(x-y,t-s)Hu(y,s)\,dy\,ds\right] 
 \end{equation}
 \begin{equation}\label{eq4.45}
  v(x,t)\leq C\left[\left(\frac{1}{\sqrt{t_j}}\right)^n +\iint_{{\cal P}_{t_j/2}(x_j ,t_j )}\Phi(x-y,t-s)Hv(y,s)\,dy\,ds\right] 
 \end{equation}
 where $C>0$ does not depend on $(x,t)$ or $j$.  
 
 Define $f_j ,g_j :{\cal P}_2 (0,2)\to[0,\infty)$ by 
 $$f_j (\eta,\zeta)=r^{\frac{n+2}{2}}_{j}Hu(x_j +\sqrt{r_j}\eta,t_j +r_j \zeta)$$
 $$g_j (\eta,\zeta)=r^{\frac{n+2}{2}}_{j}Hv(x_j +\sqrt{r_j}\eta,t_j +r_j \zeta)$$
 where $r_j =t_j /4$.  Making the change of variables
 $$x=x_j +\sqrt{r_j}\xi,\quad t=t_j +r_j \tau$$
 $$y=x_j +\sqrt{r_j}\eta,\quad s=t_j +r_j\zeta$$
 in \eqref{eq1.5}, \eqref{eq4.43}, \eqref{eq4.44}, and \eqref{eq4.45} we get
 $$\Phi(x-y,t-s)=r^{-n/2}_{j}\Phi(\xi-\eta,\tau-\zeta),$$
 \begin{equation}\label{eq4.46}
  \iint_{{\cal P}_2(0,0)}f_j (\eta,\zeta)\,d\eta\,d\zeta\to0
\quad\text{ and }\quad\iint_{{\cal P}_2 (0,0)}g_j
(\eta,\zeta)\,d\eta\,d\zeta\to0
\quad\text{as }j\to\infty,
 \end{equation}
 and
 \begin{equation}\label{eq4.47}
  u(x_j +\sqrt{r_j}\xi,t_j +r_j \tau)\leq\frac{C}{r^{n/2}_{j}}\left[1+(N_2 f_j )(\xi,\tau)\right]\quad\text{ for }(\xi,\tau)\in{\cal P}_1 (0,0)
 \end{equation}
 \begin{equation}\label{eq4.48}
  v(x_j +\sqrt{r_j}\xi,t_j +r_j \tau)\leq\frac{C}{r^{n/2}_{j}}\left[1+N_2 g_j (\xi,\tau)\right]\quad\text{ for }(\xi,\tau)\in{\cal P}_1 (0,0)
 \end{equation}
 where
 $$(N_R f)(\xi,\tau):=\iint_{{\cal P}_R (0,0)}\Phi(\xi-\eta,\tau-\zeta)f(\eta,\zeta)\,d\eta\,d\zeta.$$
 
 We now prove part (i).  Define $\varepsilon\in(0,1)$ and $\gamma>0$
 by $\sigma=\frac{2}{n}(1-\varepsilon)^2$ and
 $\gamma=\frac{n+2}{n}(1-\varepsilon)$.  It follows from
 \eqref{eq4.46} and Theorem \ref{HeatPotential} with $p=1$ and
 $\alpha=2$ that $N_2 f_j \to0$ in
 $L^\gamma ({\cal P}_2 (0,0))$ and hence
 $$(N_2 f_j )^\sigma \to0\quad\text{ in }L^{\frac{n+2}{2(1-\varepsilon)}}({\cal P}_2 (0,0)).$$  Thus by H\"older's inequality
 \begin{equation}\label{eq4.49}
  \iint_{{\cal P}_1 (0,0)}\Phi^* (N_2 f_j )^\sigma \,d\eta\,d\zeta\leq\|\Phi^* \|_{\frac{n+2}{n+2\varepsilon}}\|(N_2 f_j )^\sigma \|_{\frac{n+2}{2(1-\varepsilon)}}\to0\quad\text{ as }j\to\infty
 \end{equation}
 where $\Phi^* (\eta,\zeta)=\Phi(\eta,-\zeta)$.  By \eqref{eq4.48} and
 \eqref{eq4.46} we have
 \begin{align}\label{eq4.50}
  \notag v(x_j ,t_j )&\leq\frac{C}{\sqrt{t_j}^n}\left(1+\iint_{{\cal P}_2 (0,0)}\Phi^* g_j \,d\xi \,d\tau\right)\\
  &\leq\frac{C}{\sqrt{t_j}^n}\left(1+\iint_{{\cal P}_1 (0,0)}\Phi^* g_j \,d\xi \,d\tau\right)
 \end{align}
 and for $(\xi,\tau)\in{\cal P}_1 (0,0)$ it follows from
 \eqref{eq4.34}$_2$ and \eqref{eq4.47} that
 \begin{align}\label{eq4.51}
  \notag g_j (\xi,\tau)&=r^{\frac{n+2}{2}}_{j}(Hv)(x,t)\\
  \notag &\leq r^{\frac{n+2}{2}}_{j}\left(u(x,t)
+\left(\frac{1}{\sqrt{t_j}}\right)^n \right)^\sigma \\
  &\leq C(\sqrt{t_j})^{n+2-n\sigma}(1+N_2 f_j (\xi,\tau))^\sigma .
 \end{align}
 Substituting \eqref{eq4.51} in \eqref{eq4.50} and using \eqref{eq4.49} we get $v(x_j ,t_j )\leq C\frac{1}{\sqrt{t_j}^n}$ which contradicts \eqref{eq4.41} and thereby completes the proof of part (i).
 
 We next prove part (ii). It follows from \eqref{eq4.46}, \eqref{eq4.47}, \eqref{eq4.48} and
 Lemma \ref{lem4.2} that for $R\in(0,\frac{1}{2}]$ we have
 $$u(x_j +\sqrt{r_j}\eta,t_j +r_j \zeta)\leq\frac{C}{r^{n/2}_{j}}\left[\frac{1}{R^{n/2}}+(N_{4R}f_j )(\eta,\zeta)\right]\quad\text{ for }(\eta,\zeta)\in{\cal P}_{2R} (0,0)$$ and
 $$v(x_j +\sqrt{r_j}\xi,t_j +r_j
 \tau)\leq\frac{C}{r^{n/2}_{j}}\left[\frac{1}{R^{n/2}}+(N_{2R}g_j
   )(\xi,\tau)\right]\quad\text{ for }(\xi,\tau)\in{\cal P}_R (0,0)$$ where
 $C$ is independent of $(\xi,\tau)$, $(\eta,\zeta)$, $j$, and $R$.
 It therefore follows from \eqref{eq4.34}$_2$ and \eqref{eq4.37} that for $R\in(0,\frac{1}{2}]$ we have
 \begin{align}\label{eq4.52}
  \notag r^{-\frac{n+2}{2}}_{j} f_j (\xi,\tau)&=(Hu)(x_j +\sqrt{r_j}\xi,t_j +r_j \tau)\\
  \notag &\leq C\left(\frac{1}{r^{n/2}_{j}}\left[\frac{1}{R^{n/2}}+(N_{2R}g_j )(\xi,\tau)\right]\right)^\lambda \\
  &\leq Cr^{-n\lambda/2}_{j}\left[\frac{1}{R^{n\lambda/2}}+(N_{2R}g_j )(\xi,\tau)^\lambda \right]\quad\text{ for }(\xi,\tau)\in{\cal P}_R (0,0)
 \end{align}
 and
 \begin{align*}
  \notag r^{-\frac{n+2}{2}}_{j} g_j (\eta,\zeta)&=(Hv)(x_j +\sqrt{r_j}\eta,t_j +r_j \zeta)\\
  \notag &\leq C\left(\frac{1}{r^{n/2}_{j}}\left[\frac{1}{R^{n/2}}+(N_{4R}f_j )(\eta,\zeta)\right]\right)^\sigma \\
  &\leq Cr^{-n\sigma/2}_{j}\left[\frac{1}{R^{n\sigma/2}}+(N_{4R}f_j )(\eta,\zeta)^\sigma \right]\quad\text{ for }(\eta,\zeta)\in{\cal P}_{2R} (0,0).  
 \end{align*}
 Thus for $(\xi,\tau)\in\mathbb{R}^n \times\mathbb{R}$ we have
 \begin{align*}
  ((N_{2R}g_j )(\xi,\tau))^\lambda &\leq C\left(r^{\frac{n+2}{2}-\frac{n\sigma}{2}}_{j}N_{2R}\left[\frac{1}{R^{n\sigma/2}}+(N_{4R}f_j )^\sigma \right](\xi,\tau)\right)^\lambda \\
  &\leq Cr^{\frac{(n+2-n\sigma)\lambda}{2}}_{j}\left[R^{(1-n\sigma/2)\lambda}+((M_{4R}f_j )(\xi,\tau))^\lambda \right] 
 \end{align*}
 where $M_R f :=N_R ((N_R f )^\sigma )$.  Hence by \eqref{eq4.52}
 there exists a positive constant $a$ which depends only on $n$,
 $\lambda$, and $\sigma$ such that for $R\in(0,\frac{1}{2}]$ we have
 \begin{equation}\label{eq4.53}
  f_j (\xi,\tau)\leq C\frac{1}{(Rr_j )^a}\left(1+((M_{4R}f_j )(\xi,\tau))^\lambda \right)\quad\text{ for }(\xi,\tau)\in{\cal P}_R (0,0).
 \end{equation}
 By \eqref{eq4.36} there exists $\varepsilon=\varepsilon(n,\lambda,\sigma)\in(0,1)$ such that
 \begin{equation}\label{eq4.54}
  \sigma<\frac{n+2}{n+\varepsilon}\quad\text{ and }\sigma<\frac{2-\varepsilon}{n+\varepsilon}+\frac{n+2}{n+\varepsilon}\frac{1}{\lambda}.
 \end{equation}
 To show that \eqref{eq4.42} cannot hold for all $\gamma>n+2$ and thereby complete the proof of (ii), it suffices by the definition of $r_j$ and $f_j$ to show for some $\gamma>0$ that the sequence
 \begin{equation}\label{eq4.55}
  \{r^{\gamma}_{j}f_j (0,0)\} \text{ is bounded.}
 \end{equation}

To prove \eqref{eq4.55} we need the following result.

\begin{lem}\label{lem4.6}
  Suppose the sequence
  \begin{equation}\label{eq4.56}
   \{r^{\alpha}_{j}f_j \} \text{ is bounded in }L^p ({\cal P}_{4R}(0,0))
  \end{equation}
  for some constants $\alpha\geq0$, $p\in[1,\infty)$ and $R\in(0,\frac{1}{2}]$.  Let $\beta=\alpha\lambda\sigma+a$ where $a$ is as in \eqref{eq4.53}.  Then either the sequence
  \begin{equation}\label{eq4.57}
   \{r^{\beta}_{j}f_j \} \text{ is bounded in }L^\infty ({\cal P}_R (0,0))
  \end{equation}
  or there exists a positive constant $C_0 =C_0 (n,\lambda,\sigma)$ such that the sequence
  \begin{equation}\label{eq4.58}
   \{r^{\beta}_{j}f_j \} \text{ is bounded in }L^q ({\cal P}_R (0,0))
  \end{equation}
  for some $q\in(p,\infty)$ satisfying
  \begin{equation}\label{eq4.59}
   \frac{1}{p}-\frac{1}{q}>C_0 .
  \end{equation}
 \end{lem}

\begin{proof}
 It follows from \eqref{eq4.53} that
 \begin{equation}\label{eq4.60}
  r^{\beta}_{j}f_j (\xi,\tau)\leq\frac{C}{R^a}\left(1+(M_{4R}(r^{\alpha}_{j}f_j ))(\xi,\tau))^\lambda \right)\quad\text{ for }(\xi,\tau)\in{\cal P}_R (0,0).
 \end{equation}
 We can assume
 \begin{equation}\label{eq4.61}
  p\leq\frac{n+2}{2}
 \end{equation}
 for otherwise from Theorem \ref{HeatPotential} and \eqref{eq4.56} we find that the sequence
 $\{N_{4R}(r^{\alpha}_{j}f_j )\}$ is bounded in $L^\infty ({\cal
   P}_{4R}(0,0))$ and hence by \eqref{eq4.60} we see that \eqref{eq4.57} holds.
 
 Define $p_2$ by
 \begin{equation}\label{eq4.62}
  \frac{1}{p}-\frac{1}{p_2}=\frac{2-\varepsilon}{n+2}
 \end{equation}
 where $\varepsilon=\varepsilon(n,\lambda,\sigma)$ is as in \eqref{eq4.54}.
 By \eqref{eq4.61}, $p_2 \in(p,\infty)$ and by Theorem \ref{HeatPotential} we have
 \begin{equation}\label{eq4.63}
  \|(N_{4R}f_j )^\sigma \|_{p_2 /\sigma}=\| N_{4R}f_j \|^{\sigma}_{p_2}\leq C\| f_j \|^{\sigma}_{p}
 \end{equation}
 where $\|\cdot\|_p:=\|\cdot\|_{L^p ({\cal P}_{4R}(0,0))}$.  Since, by \eqref{eq4.54}, 
 $$\frac{1}{p_2}=\frac{1}{p}-\frac{2-\varepsilon}{n+2}\leq
 1-\frac{2-\varepsilon}{n+2}=\frac{n+\varepsilon}{n+2}<\frac{1}{\sigma}$$ 
we have
 \begin{equation}\label{eq4.64}
  \frac{p_2}{\sigma}>1.
 \end{equation}
 We can assume 
 \begin{equation}\label{eq4.65}
p_2 /\sigma\le (n+2)/2 
\end{equation}
for otherwise by Theorem \ref{HeatPotential} and \eqref{eq4.63} we have
 $$\| M_{4R}(r^{\alpha}_{j}f_j )\|_\infty 
\leq C\|(N_{4R}(r^{\alpha}_{j}f_j ))^\sigma \|_{p_2/\sigma}\leq C\| r^{\alpha}_{j}f_j \|^{\sigma}_{p}$$ which is bounded by \eqref{eq4.56}.  Hence by
 \eqref{eq4.60} we see that \eqref{eq4.57} holds.

Define $p_3$ and $q$ by 
\begin{equation}\label{eq4.66}
\frac{\sigma}{p_2}-\frac{1}{p_3}=\frac{2-\varepsilon}{n+2}
\quad\text{and}\quad q=\frac{p_3}{\lambda}.
\end{equation}
By \eqref{eq4.64} and \eqref{eq4.65}, $p_3\in(1,\infty)$ and by
Theorem \ref{HeatPotential}
\begin{align*}
  \Vert \left(M_{4R}f_j\right)^\lambda \Vert_q &=\Vert M_{4R}f_j\Vert^{\lambda}_{p_3}\\
  &\leq C\Vert(N_{4R}f_j)^\sigma \Vert^{\lambda}_{p_2/\sigma} 
\leq C\Vert f_j \Vert^{\lambda\sigma}_{p}
 \end{align*}
by \eqref{eq4.63}. It follows therefore from \eqref{eq4.60} that 
\[
\|r_j^\beta f_j\|_{L^q({\cal P}_R(0,0))}
\le\frac{C}{R^a}\left(1+\|r_j^\alpha f_j\|_p^{\lambda\sigma}\right)
\]
which is a bounded sequence by \eqref{eq4.56}. 
It remains to prove that $q$ satisfies \eqref{eq4.59} for some positive
constant $C_0=C_0(n,\lambda, \sigma)$.

By \eqref{eq4.62} and \eqref{eq4.66} we have 
 \begin{align}\label{eq4.67}
  \notag \frac{1}{p}-\frac{1}{q}&=\frac{1}{p}-\frac{\lambda}{p_3}
=\frac{1}{p}+\frac{(2-\varepsilon)\lambda}{n+2}-\frac{\lambda\sigma}{p_2}\\
  \notag &=\frac{1}{p}+\frac{(2-\varepsilon)\lambda}{n+2}+\frac{(2-\varepsilon)\lambda\sigma}{n+2}-\frac{\lambda\sigma}{p}\\
  &=-\frac{\lambda\sigma-1}{p}+\frac{(2-\varepsilon)\lambda\sigma+(2-\varepsilon)\lambda}{n+2}.
 \end{align}
 \begin{description}
  \item[Case I.] Suppose $\lambda\sigma\leq 1$.  Then by
    \eqref{eq4.67}, \eqref{eq4.36}, 
 and \eqref{eq4.40} we get
  \begin{equation*}
   \frac{1}{p}-\frac{1}{q}\geq \frac{(2-\varepsilon)\lambda\sigma+(2-\varepsilon)\lambda}{n+2} \geq C_1(n)>0.
  \end{equation*}
  \item[Case II.] Suppose $\lambda\sigma>1$.  Then, by \eqref{eq4.67},
  \begin{align*}
   \frac{1}{p}-\frac{1}{q}&\geq 1-\sigma\lambda+
\frac{(2-\varepsilon)\lambda\sigma+(2-\varepsilon)\lambda}{n+2}\\
   &=\frac{1}{n+2} [n+2+(2-\varepsilon)\lambda-\lambda\sigma(n+2-(2-\varepsilon))]\\
   &=\frac{(n+\varepsilon)\lambda}{n+2} 
\left[ \frac{2-\varepsilon}{n+\varepsilon}+\frac{n+2}{n+\varepsilon} \frac{1}{\lambda}-\sigma \right]\\
   &=C_2(n,\lambda,\sigma)>0
  \end{align*}
  by \eqref{eq4.54}.  
\end{description}
Thus \eqref{eq4.59} holds with $C_0=\min(C_1,C_2)$.
This completes the proof of Lemma \ref{lem4.6}.
\end{proof}

We return now to the proof of Lemma \ref{lem4.5}(ii).  By
\eqref{eq4.46}, the sequence $\{f_j\}$ is bounded in $L^1({\cal P}_2(0,0))$.
Starting with this fact and iterating Lemma \ref{lem4.6} a finite
number of times ($m$ times is enough if $m>1/C_0$) we see that there
exists $R_0 \in(0,\frac{1}{2})$ and $\gamma>n$ such that sequence
$\{r_j^\gamma f_j\}$ is bounded in $L^\infty({\cal P}_{R_0}(0,0))$. In
particular \eqref{eq4.55} holds. This completes the proof of Lemma
\ref{lem4.5}(ii).
\end{proof}

\section{Proofs}\label{sec5}
In this section we prove the results in Section \ref{sec2}.

\begin{proof}[Proof of Theorem \ref{thm2.1}]
  Since increasing $\sigma$ and/or $\lambda$ weakens the conditions
  \eqref{eq2.1} on $u$ and $v$, we can assume $\sigma=\lambda=\frac{n+2}{n}$.
  Let $w=u+v$.  Then it follows from \eqref{eq2.1} that in
  $\Omega\times(0,1)$ we have
 \begin{align*}
  0&\le Hw=Hu+Hv
\le \left(v+\left(\frac{1}{\sqrt{t}}\right)^n\right)^{\frac{n+2}{n}}
+\left(u+\left(\frac{1}{\sqrt{t}}\right)^n\right)^{\frac{n+2}{n}}\\
&\le C\left(w+\left(\frac{1}{\sqrt{t}}\right)^n\right)^{\frac{n+2}{n}}
 \end{align*}
 for some positive constant $C$. Thus by \cite[Theorem 1.1]{T2011}
for each compact subset $K$ of $\Omega$ we have 
\begin{equation*}
 \max_{x\in K} (u(x,t)+v(x,t))=\max_{x\in K} w(x,t)
= O\left(\left(\frac{1}{\sqrt{t}}\right)^n\right) 
\quad \quad\text{ as }t\to0^+
 \end{equation*}
 which proves \eqref{eq2.6} and \eqref{eq2.7}.
\end{proof}

\begin{proof}[Proof of Theorem \ref{thm2.2}] Since increasing $\sigma$ weakens
  the conditions on $u$ and $v$ in \eqref{eq2.1} but does not change
  the estimates \eqref{eq2.9} and \eqref{eq2.10}, we can, instead of
  \eqref{eq2.8}, assume
 \begin{equation}\label{eq5.6}
  \lambda>\frac{n+2}{n}\quad\text{ and }\quad\frac{2}{n}<\sigma
<\frac{2}{n}+\frac{n+2}{n\lambda}.
 \end{equation}
Let $\{\Omega_i \}$ be a sequence of bounded open subsets of
 $\mathbb{R}^n$ such that
 $$\overline{\Omega}_i \subset\Omega\quad\text{ and }K\subset\overline{\Omega}_{i+1}\subset\Omega_i \quad\text{ for }i=1,2,... .$$
 By Lemma \ref{lem4.5}(ii), for some $\gamma>n+2$ we
 have 
$$\max_{x\in\overline{\Omega}_1}Hu(x,t)
=O\left(\left(\frac{1}{\sqrt{t}}\right)^\gamma \right)\quad\text{ as }t\to0^+ .$$ 
Hence by Lemma \ref{lem4.4}
 \begin{equation}\label{eq5.7}
  \max_{x\in\overline{\Omega}_2}v(x,t)=O\left(\left(\frac{1}{\sqrt{t}}\right)^p \right)\quad\text{ as }t\to0^+
 \end{equation}
 for some $p>n$.  Thus by \eqref{eq2.1},
 $$\max_{x\in\overline{\Omega}_2}Hu(x,t)
\leq
O\left(\left(\frac{1}{\sqrt{t}}\right)^{p\lambda}\right)
\quad\text{ as }t\to 0^+.$$  
Thus by Lemma \ref{lem4.4} we get
 \begin{equation}\label{eq5.8}
  \max_{x\in\overline{\Omega}_3}v(x,t)=O\left(\left(\frac{1}{\sqrt{t}}\right)^n
  \right)+o\left(\left(\frac{1}{\sqrt{t}}\right)^{p\lambda\frac{n\sigma-2}{n+2}}\right)
\quad\text{as }t\to 0^+.
 \end{equation}
 By \eqref{eq5.6}, $\lambda\frac{n\sigma-2}{n+2}<1$.  Thus iterating a
 finite number of times the procedure of going from \eqref{eq5.7} to
 \eqref{eq5.8} we obtain for some positive integer $k$ that
 \begin{equation}\label{eq5.9}
  \max_{x\in\overline{\Omega}_k}v(x,t)=O\left(\left(\frac{1}{\sqrt{t}}\right)^n \right)\quad\text{ as }t\to0^+
 \end{equation}
 which clearly implies \eqref{eq2.10}.  It follows from \eqref{eq5.9}
 and $\eqref{eq2.1}_1$ that
 $$\max_{x\in\overline{\Omega}_k}Hu(x,t)
=O\left(\left(\frac{1}{\sqrt{t}}\right)^{n\lambda}\right)\quad\text{ as }t\to0^+ .$$  Thus by Lemma \ref{lem4.4} and \eqref{eq5.6}
 $$\max_{x\in\overline{\Omega}_{k+1}}u(x,t)
=o\left(\left(\frac{1}{\sqrt{t}}\right)^{\frac{n^2}{n+2}\lambda}\right)\quad\text{ as }t\to0^+$$ which clearly implies \eqref{eq2.9}.
\end{proof}

\begin{proof}[Proof of Theorem \ref{thm2.3}] Since
  $\lambda>\tfrac{n+2}{n}$ and $\varphi(t)\to 0^+$ as $t\to 0^+$ there
  exists a sequence $\{T_j\}\subset\mathbb{R}$ such
  that $0<4T_{j+1}<T_j <\tfrac{1}{2}$,
\[
\sum^{\infty}_{j=1}\varepsilon_j <\infty\quad\text{ where }\varepsilon_j 
=\sqrt{\varphi(T_j )},
\] and
\[
0<r_j <T_j /2 \quad\text{ where }r_j =T^{\tfrac{n\lambda}{n+2}}_{j}.
\]
Let
\[
M=\min_{\overline{{\cal P}_{1/2}(0,1)}}\Phi
\] 
where ${\cal P}_r(x,t)$ is defined by \eqref{eq4.9}.  Then $M>0$ and
\begin{equation}\label{eq5.5}
  \min_{\overline{{\cal P}_{T_j/2}(0,T_j)}}\Phi=M/T^{n/2}_{j}.
\end{equation}
For the rest of this proof the variables $(x,t)$ and $(\xi,\tau)$
(resp. $(y,s)$ and $(\eta,\zeta)$) will be related by
\[
x=\sqrt{r_j}\xi,\,t=T_j +r_j \tau \quad\text{ (resp.
}y=\sqrt{r_j}\eta,\,s=T_j +r_j \zeta).
\]  
Under this change of variables,
$$(y,s)\in{\cal P}_{r_j}(0,T_j )
\text{ if and only if }(\eta,\zeta)\in{\cal P}_1 (0,0).$$  Let 
$\psi:\mathbb{R}^n \times\mathbb{R}\to[0,1]$ be a $C^\infty$ 
function whose support is $\overline{{\cal P}_1 (0,0)}$.  Define 
$\psi_j :\mathbb{R}^n \times\mathbb{R}\to[0,1]$ by
$$\psi_j (y,s)=\psi(\eta,\zeta).$$
Then the support of $\psi_j$ is $\overline{{\cal P}_{r_j}(0,T_j )}$ and 
$$\iint_{\mathbb{R}^n \times\mathbb{R}}\psi_j (y,s)\,dy\,ds
=\iint_{\mathbb{R}^n
  \times\mathbb{R}}\psi(\eta,\zeta)r^{\tfrac{n}{2}+1}_{j}d\eta\,
d\zeta=r^{\tfrac{n+2}{2}}_{j}I$$ 
where 
$$I=\iint_{\mathbb{R}^n \times\mathbb{R}}\psi(\eta,\zeta)\,d\eta\,d\zeta>0.$$  
Let
$$f=\sum^{\infty}_{j=1}M_j \psi_j \quad\text{ where }M_j 
=\frac{\varepsilon_j}{r^{\tfrac{n+2}{2}}_{j}}.$$  
Since the functions $\psi_j$ have disjoint supports, 
$f\in C^\infty ((\mathbb{R}^n \times\mathbb{R})\backslash\{0,0\})$.  
Also
$$\iint_{\mathbb{R}^n \times\mathbb{R}}f(y,s)\,dy\,ds
=\sum^{\infty}_{j=1}M_j \iint_{\mathbb{R}^n \times\mathbb{R}}\psi_j(y,s)\,dy\,ds
=I\sum^{\infty}_{j=1}M_j r^{\tfrac{n+2}{2}}_{j}
=I\sum^{\infty}_{j=1}\varepsilon_j <\infty.$$  
Thus the
functions $u,v:\mathbb{R}^n \times\mathbb{R}\to[0,\infty)$ defined by 
\begin{align*}
u(x,t)&=\iint_{\mathbb{R}^n \times\mathbb{R}}\Phi(x-y,t-s)f(y,s)\,dy\,ds\\
v(x,t)&=\frac{1}{M}\Phi(x,t)
\end{align*} 
are $C^\infty$ on $\mathbb{R}^n \times\mathbb{R}\backslash\{(0,0)\}$
and they clearly satisfy \eqref{eq2.11}$_2$ and \eqref{eq2.14}.

For $(x,t)\in\overline{{\cal P}_{r_j}(0,T_j )}$ we have
\begin{align*}
 u(x,t)&\geq\iint_{{\cal P}_{r_j}(0,T_j )}\Phi(x-y,t-s)M_j \psi_j (y,s)\,dy\,ds\\
 &=\frac{\varepsilon_j}{r^{n/2}_{j}}\iint_{{\cal P}_1 (0,0)}
\Phi(\xi-\eta,\tau-\zeta)\psi(\eta,\zeta)\,d\eta \,d\zeta.
\end{align*}
Thus, letting
$$J=\iint_{{\cal P}_1
  (0,0)}\Phi(-\eta,-\zeta)\psi(\eta,\zeta)\,d\eta \,d\zeta>0$$ 
we find that
\[
 u(0,T_j )\ge\frac{\varepsilon_j J}{r^{n/2}_{j}}
 =\frac{\sqrt{\varphi(T_j)}J}{T^{\tfrac{n^2 \lambda}{2(n+2)}}_{j}}>>\frac{\varphi(T_j )}{T^{\tfrac{n^2\lambda}{2(n+2)}}_{j}}\quad\text{ as }j\to\infty
\]
which proves \eqref{eq2.13}.

Also, for $(x,t)\in{\cal P}_{r_j}(0,T_j )$, it follows from \eqref{eq5.5} that 
\begin{align*}
 Hu(x,t)&=f(x,t)=M_j \psi_j \leq M_j\\
 &=\frac{\varepsilon_j}{r^{\tfrac{n+2}{2}}_{j}}=\frac{\varepsilon_j}{T^{\tfrac{n\lambda}{2}}_{j}}\leq\left(\frac{1}{T^{n/2}_{j}}\right)^\lambda \\
 &\leq\left(\frac{1}{M}\Phi(x,t)\right)^\lambda =v(x,t)^\lambda
\end{align*}
which yields \eqref{eq2.11}$_1$.
\end{proof}

\begin{proof}[Proof of Theorem \ref{thm2.4}]
Define 
\[
p:=\frac{\lambda+1}{\lambda\sigma-1}\quad\text{and}
\quad q:=\frac{\sigma+1}{\lambda\sigma-1}.
\]
Then 
\[
\frac{1}{p}-\frac{2}{n}=\frac{\lambda}{\lambda+1}
\left[\sigma-\left(\frac{2}{n}+\frac{n+2}{n\lambda}\right)\right].
\]
Thus by \eqref{eq2.15}
\begin{equation}\label{eq5.10}
0<q\le p<n/2.
\end{equation}
Also
\begin{equation}\label{eeq5.11}
\lambda q=p+1\quad \text{and}\quad \sigma p=q+1.
\end{equation}
Let $\{T_j\}\subset (0,1)$ be a sequence such that $T_j\to 0$ as
$j\to\infty$. Define $w_j,z_j:(-\infty,T_j)\to(0,\infty)$ by 
\[
w_j(t)=(T_j-t)^{-p}\quad\text{and}\quad z_j(t)=(T_j-t)^{-q}.
\]
Then by \eqref{eq5.10} and \eqref{eeq5.11}, we have for $0\le t<T_j$ that
\begin{equation}\label{eeq5.12}
w_j(t)\ge  z_j(t), \quad w_j'(t)\ge  z_j'(t), 
\quad w_j'(t)=pz_j(t)^\lambda,  \quad z_j'(t)=qw_j(t)^\sigma.
\end{equation}
Choose $t_j\in (0,T_j)$ such that $w_j(t_j)=t_j^{-n/2}$. Then
\begin{equation}\label{eeq5.13}
\frac{T_j}{t_j}-1=t_j^{\frac{n}{2p}-1}\to 0\quad\text{as } j\to\infty
\end{equation}
by \eqref{eq5.10}.

Choose $a_j\in (t_j,T_j)$ such that $z_j(a_j)>j\varphi(a_j)$. Then
\begin{equation}\label{eq5.11}
\frac{z_j(a_j)}{\varphi(a_j)}\to\infty\quad \text{as}\quad j\to\infty.
\end{equation}
Let $h_j(s) = \sqrt{4(a_j-s)}$ and $H_j(s) = \sqrt{4(a_j+\vp_j-s)}$ where 
$\vp_j>0$ satisfies 
\begin{equation}\label{eq5.12}
a_j+2\vp_j<T_j,\quad t_j-\vp_j>t_j/2,\quad w_j(t_j-\vp_j) > 
\frac{w_j(t_j)}2, \quad \text{and}\quad z_j(t_j-\vp_j) > 
\frac{z_j(t_j)}2.
\end{equation}
Define
\begin{align*}
\omega_j &= \{(y,s) \in \R^n\times \R\colon \ |y|<h_j(s)\quad 
\text{and}\quad t_j<s<a_j\}\\
\Omega_j &= \{(y,s) \in \R^n\times \R\colon \ |y|<H_j(s) \quad 
\text{and}\quad t_j-\vp_j<s<a_j+\vp_j\}.
\end{align*}
By taking a subsequence, we can assume the sets $\Omega_j$ are
pairwise disjoint.

Let $\chi_j\colon \ \R^n\times\R\to [0,1]$ be a $C^\infty$ function 
such that $\chi_j\equiv 1$ in $\omega_j$ and $\chi_j\equiv 0$ 
in $\R^n\times \R\setminus\Omega_j$. 
Define $f_j,g_j,u_j,v_j\colon \R^n \times \R\to 
[0,\infty)$ by
\[
f_j(y,s) = \chi_j(y,s)w'_j(s),\quad g_j(y,s) = \chi_j(y,s)z'_j(s) 
\]
\[
u_j(x,t) = \iint_{\R^n\times\R} \Phi(x-y,t-s)
f_j(y,s)\,dy\,ds
\]
and
\[
v_j(x,t) = \iint_{\R^n\times\R} \Phi(x-y,t-s) g_j(y,s)\,dy\,ds.
\]
Then $u_j$ and $v_j$ are $C^\infty$ and 
\begin{equation}\label{eq5.13}
\begin{array}{ll}
Hu_j=f_j,\quad Hv_j=g_j&\text{in } \R^n \times 
\R\\
\noalign{\medskip}
u_j=v_j=0&\text{in } \R^n\times (-\infty,0)\end{array}
\end{equation}
where $Hu=u_t-\Delta u$ is the heat operator.

By \eqref{eeq5.12} and Theorem \ref{HeatPotential} we have
\begin{align}
\left\|~\iint_{\Omega_j\setminus\omega_j} \Phi(x-y,t-s) 
z'_j(s)\,dy\,ds\right\|_{L^\infty(\R^n\times (0,1))} 
&\le \left\|~\iint_{\Omega_j\setminus\omega_j} \Phi(x-y,t-s) 
w'_j(s)\,dy\,ds\right\|_{L^\infty(\R^n\times (0,1))}\nonumber\\
&\le C_n\|w'_j(s)\|_{L^{n+2}(\Omega_j\setminus\omega_j)}\nonumber\\
&\le z_j(t_j)\le w_j(t_j)\label{eq5.14}
\end{align}
provided we decrease $\vp_j$ if necessary.

Also, for $(x,t)\in\Omega_j$ we have
$
|x|\le \sqrt{4(T_j-t_j)}
$
by \eqref{eq5.12}; and thus using \eqref{eq5.12} again we obtain
\begin{equation}\label{eq5.15}
\max_{(x,t)\in\Omega_j} \frac{|x|^2}t \le \frac{4(T_j-t_j)}{t_j-\vp_j} \le 
\frac{8(T_j-t_j)}{t_j}\to 0\quad \text{as}\quad j\to\infty
\end{equation}
by \eqref{eeq5.13}. Hence there exists a positive number $M$,
independent of $j$, such that for all $(x,t)\in\Omega_j$ we have
\begin{equation}\label{M}
M\Phi(x,t)\ge 2/t_j^{n/2}=2w_j(t_j)\ge 2z_j(t_j). 
\end{equation}

In order to obtain a lower bound for $u_j$ and $v_j$ in $\Omega_j$,
note first that for $t_j-\vp_j\le s\le t\le a_j+\vp_j$ and $|x|\le
H_j(t)$ we have
\begin{align}
\label{eq5.16}
\int_{|y|<H_j(s)} \Phi(x-y,t-s)\,dy &= \frac1{\pi^{n/2}} 
\int_{|z-\frac{x}{\sqrt{4(t-s)}}|<\frac{H_j(s)}{\sqrt{4(t-s)}}} e^{-|z|^2} dz\\
\label{eq5.17}
&\ge \frac1{\pi^{n/2}} \int_{|z-\frac{H_j(s)e_1}{\sqrt{4(t-s)}}| < 
\frac{H_j(s)}{\sqrt{4(t-s)}}} e^{-|z|^2} dz \quad \text{where }  e_1 = 
(1,0,\ldots,0)\\
\label{eq5.18}
&\ge \alpha_n
\end{align}
where
\begin{equation}\label{eq5.19}
\alpha_n := \frac1{\pi^{n/2}} \int_{|z-e_1|<1} e^{-|z|^2} \, dz\in (0,1).
\end{equation}
Some of the steps in the above calculation need some explanation. Equation 
\eqref{eq5.16} is obtained by making the change of variables 
$z=\frac{x-y}{\sqrt{4(t-s)}}$. Since $|x| \le H_j(t)\le H_j(s)$, the center of 
the ball of integration in \eqref{eq5.16} is closer to the origin than the 
center of the ball of integration in \eqref{eq5.17}. Thus, since the integrand 
$e^{-|z|^2}$ is a decreasing function of $|z|$, we obtain \eqref{eq5.17}. Since 
$H_j(s) \ge \sqrt{4(t-s)}$, the ball of integration in \eqref{eq5.17} contains 
the ball of integration in \eqref{eq5.19} and hence inequality \eqref{eq5.18} 
holds.

Using \eqref{eq5.18} and \eqref{eq5.19}, we find for $(x,t)
\in\Omega_j$ that
\begin{align*}
\iint_{\Omega_j} \Phi(x-y,t-s) w'_j(s)\,dy\,ds
&= \int^t_{t_j-\vp_j} w'_j(s) \left(~\int_{|y|<H_j(s)}\Phi(x-y,t-s)\,dy\right)ds\\
&\ge \alpha_n(w_j(t) -w_j(t_j-\vp_j)) \ge \alpha_n w_j(t) - w_j(t_j)
\end{align*}
and similarly
\[
\iint_{\Omega_j} \Phi(x-y,t-s) z'_j(s)\,dy\,ds
\ge \alpha_n z_j(t) - z_j(t_j).
\]
It therefore follows from \eqref{eq5.14} 
that for $(x,t) \in\Omega_j$ we have
\begin{align}
u_j(x,t) &\ge \iint_{\omega_j} \Phi(x-y,t-s) w'_j(s)\,dy\,ds\nonumber\\
&= \iint_{\Omega_j} \Phi(x-y,t-s) w'_j(s)\,dy\,ds -\iint_{\Omega_j 
\setminus\omega_j} \Phi(x-y,t-s) w'_j(s)\,dy\,ds\nonumber\\
\label{eq5.20}
&\ge \alpha_nw_j(t) -2w_j(t_j)
\end{align}
and similarly
\[
v_j(x,t)\ge \alpha_nz_j(t) -2z_j(t_j).
\]

Also,
\begin{align*}
\iint_{\R^n\times\R} f_j(y,s)\,dy\,ds 
&\le \iint_{\Omega_j} w'_j(s)\,dy\,ds\\
&\le p\int^{T_j}_0 (T_j-s)^{-(p+1)}\left(\int_{|y|<\sqrt{4(T_j-s)}}dy\right) ds\\
&= \omega_np \int^{T_j}_0 (T_j-s)^{-(p+1)}(4(T_j-s))^{n/2}ds\\
&=  4^{n/2}\omega_np \int^{T_j}_0 (T_j-s)^{n/2-p-1}ds\\
&= 4^{n/2}\omega_np \int^{T_j}_0 \tau^{n/2-p-1}\,d\tau\\
&\to 0 \quad \text{as }j\to\infty
\end{align*}
by \eqref{eq5.10}.
We consequently obtain from \eqref{eeq5.12} that
\[
\iint_{\R^n\times\R} \sum^\infty_{j=1}g_j(y,s)\,dy\,ds 
\le \iint_{\R^n\times\R} \sum^\infty_{j=1} f_j(y,s)\,dy\,ds < \infty
\]
provided we take a subsequence if necessary. Hence the functions
$u,v\colon (\R^n\times \R)\setminus \{(0,0)\}\to [0,\infty)$
defined by
\begin{equation*}
\begin{aligned}
u(x,t)&=1+M\Phi(x,t) + \sum^\infty_{j=1} u_j(x,t)\\
v(x,t)&=1+M\Phi(x,t) + \sum^\infty_{j=1} v_j(x,t)
\end{aligned}
\end{equation*}
are $C^\infty$ and by \eqref{eq5.13} we have
\begin{alignat}{2}\label{eq5.21}
Hu &= \sum^\infty_{j=1}f_j,\quad &Hv= \sum^\infty_{j=1}g_j
\qquad &\text{in } (\R^n\times\R)\setminus\{(0,0)\}\\ 
u &= 0,\quad &v= 0\qquad &\text{in } \R^n\times (-\infty,0).\nonumber
\end{alignat}
Also, for 
$(x,t)\in\Omega_j$ we have by \eqref{eq5.20} and \eqref{M} that
\begin{align}
u(x,t) &\ge M\Phi(x,t) +u_j(x,t)\nonumber\\
&\ge M\Phi(x,t) + (\alpha_nw_j(t)-2w_j(t_j))\nonumber\\
\label{eq5.22}
&\ge \alpha_nw_j(t)
\end{align}
and similarly
\begin{equation}\label{eq5.23}
v(x,t)\ge\alpha_nz_j(t).
\end{equation}
Thus by \eqref{eeq5.12} and \eqref{eq5.11} we have 
\begin{align*}
\min\left\{\frac{u(0,a_j)}{\varphi(a_j)},
\frac{v(0,a_j)}{\varphi(a_j)}\right\}
&\ge \min\left\{\frac{\alpha_nw_j(a_j)}{\varphi(a_j)},
\frac{\alpha_nz_j(a_j)}{\varphi(a_j)}\right\}\\
&=\frac{\alpha_nz_j(a_j)}{\varphi(a_j)}\to\infty\quad\text{as }j\to\infty,
\end{align*}
and so $u$ and $v$ satisfy \eqref{eq2.17} and \eqref{eq2.18}.

It also follows from \eqref{eeq5.12}, \eqref{eq5.21}, \eqref{eq5.22}
and \eqref{eq5.23} that for $(x,t)\in\Omega_j$ we have
\begin{equation}
\begin{aligned}
Hu(x,t) &= f_j(x,t) \le w'_j(t) = pz_j(t)^\lambda 
\le p\left(\frac{v(x,t)}{\alpha_n}\right)^\lambda\\
Hv(x,t) &= g_j(x,t) \le z'_j(t) = qw_j(t)^\sigma 
\le q\left(\frac{u(x,t)}{\alpha_n}\right)^\sigma.
\end{aligned} \label{eq5.24}
\end{equation}
Inequalities \eqref{eq5.24} also hold for $(x,t) \in (\R^n\times
\R) \setminus\bigcup\limits^\infty_{j=1} \Omega_j$ because
$Hu=Hv=0$ there by \eqref{eq5.21}.  We thus obtain inequalities
\eqref{eq2.16} by scaling the independent variables $x$ and $t$.
\end{proof}

\begin{proof}[Proof of Theorem \ref{thm2.5}]
Theorem \ref{thm2.5} follows from (and is actually the same as) Lemma
\ref{lem4.5}(i).
\end{proof}

\begin{proof}[Proof of Theorem \ref{thm2.6}]
Theorem \ref{thm2.6} follows immediately from the conclusion
\eqref{eq4.22} in Lemma \ref{lem4.4}.
\end{proof}


\begin{thebibliography}{33}

\bibitem{AH} D.~R. Adams and L.~I. Hedberg, Function Spaces and
  Potential Theory, Grundlehren der Math. Wissenschaften 314,
  Springer, Berlin-Heidelberg, 1996.

\bibitem{BL} H. Br\'ezis and P.-L. Lions, A note on isolated
  singularities for linear elliptic equations. Mathematical analysis
  and applications, Part A, pp. 263--266, Adv. in Math. Suppl. Stud.,
  7a, Academic Press, New York-London, 1981.

\bibitem{C2000} G. Caristi,  Existence and nonexistence of global solutions of
degenerate and singular parabolic systems, Abstr. Appl. Anal. 5
(2000), 265--284.

\bibitem{C2001} G. Caristi, Nonexistence theorems for systems of quasilinear
parabolic inequalities, Adv. Nonlinear Stud. 1 (2001), 143--158.

\bibitem{GT} D. Gilbarg and N. Trudinger, Elliptic partial
  differential equations of second order, Second edition,
  Springer-Verlag, Berlin, 1983.

\bibitem{H1972} L. I. Hedberg, On certain convolution inequalities,
  Proc. Amer. Math. Soc. 36 (1972), 505--510.

\bibitem{H2014} K. Hirata, Removable singularities of semilinear
  parabolic equations, Proc.~Amer.~Math.~Soc. 142 (2014), 157--171.

\bibitem{MH} V. G. Maz'ya and V. P. Havin, Nonlinear potential theory,
  Usp. Mat. Nauk 27 (1972), 67--138 (in Russian). English translation:
  Russ. Math. Surv.  27 (1972), 71--148.

\bibitem{M}  V. Maz'ya, Sobolev Spaces, with Applications to Elliptic
  Partial Differential Equations, 2nd, Augmented Edition. Grundlehren
  der Math. Wissenschaften 342, Springer, Berlin, 2011.

\bibitem{MP2004} E. Mitidieri and S. I. Pohozaev, Towards a unified approach to
nonexistence of solutions for a class of differential
inequalities, Milan J. Math. 72 (2004), 129--162.

\bibitem{QS2007} P. Quittner and P. Souplet, Superlinear parabolic
  problems, blow-up, global existence and steady states, Birkhauser,
  Basel, 2007.

\bibitem{Stein} E. M. Stein, Singular integrals and differentiability
  properties of functions. Princeton Mathematical Series, No. 30
  Princeton University Press, Princeton, N.J. 1970.

\bibitem{T2011} S. D. Taliaferro, Initial blow-up of solutions of
  semilinear parabolic inequalities, J. Differential Equations 250
  (2011), 892--928.

\bibitem{W1976} N. A. Watson, Green functions, potentials, and the
  Dirichlet problem for the heat equation, Proc. London Math. Soc. 
  33 (1976), 251--298.

\end{thebibliography}
\end{document}